\documentclass[11pt]{amsart}
\usepackage[margin=1in]{geometry}

\usepackage[english]{babel}

\usepackage[dvipsnames]{xcolor}
\usepackage{tikz}
\usetikzlibrary{fit, backgrounds}
\usetikzlibrary{shapes.geometric}
\usetikzlibrary{svg.path}
\usetikzlibrary{decorations.pathreplacing}

\usepackage{amsmath}
\usepackage{amssymb}
\usepackage{amsthm}
\usepackage{xcolor}
\usepackage{graphicx}
\usepackage[colorlinks=true, allcolors=blue]{hyperref}
\usepackage{cleveref}
\usepackage{enumitem}

\newtheorem{lemma}{Lemma}[section] 
\newtheorem{theorem}[lemma]{Theorem}
\newtheorem{corollary}[lemma]{Corollary}
\newtheorem{proposition} [lemma]{Proposition}
\newtheorem{observation}[lemma]{Observation}

\numberwithin{equation}{section}
\theoremstyle{definition}
\newtheorem{definition}[lemma] {Definition}
\newcounter{tbox}
\newcommand{\sta}[1]{\vspace*{0.3cm}\refstepcounter{tbox}\noindent{ \parbox{\textwidth}{(\thetbox) \emph{#1}}}\vspace*{0.3cm}}

\newtheorem*{thm:graph operations}{\Cref{Prop: Join superadditive} and \Cref{Theorem: Disjoint Union subadditive}}
\newtheorem*{thm:interval}{\Cref{Theorem: I_k is an interval}}
\newtheorem*{thm:kco graph}{\Cref{Theorem: every graph is a k-coalition partition graph}}

\newcommand{\defn}[1]{\textcolor{Maroon}{\emph{#1}}}
\newcommand{\N}{\mathbb{N}}
\newcommand{\ints}{\mathbb{Z}}

\DeclareMathOperator{\COk}{CO_k}
\DeclareMathOperator{\cok}{co_k}
\newcommand{\copart}{\mathfrak{X}}
\newcommand{\kcograph}{\operatorname{kCG}}
\newcommand{\kset}{\text{I}_k}

\title{On $k$-coalition partitions of graphs}
\author{Claire Kaneshiro}
\address{Department of Mathematics, Princeton University, Princeton, NJ}
\email{kaneshiroclaire@gmail.com}
\date{\today}

\begin{document}

\begin{abstract} 
In a graph, a set $D$ is $k$-dominating if every vertex in $V(G) \setminus D$ has at least $k$ neighbors in $D$. Jafari, Alikhani, and Bakhshesh  introduced the concept of a $k$-coalition, which is a pair of disjoint sets $X_1$ and $X_2$ of vertices such that neither is a $k$-dominating set but $X_1 \cup X_2$ is a $k$-dominating set. A $k$-coalition partition is a vertex partition in which each set either forms a $k$-coalition with some other set or is itself a $k$-dominating set with exactly $k$ vertices. The $k$-coalition number $\COk(G)$ is the maximum number of sets in a $k$-coalition partition. 
We compute the $k$-coalition number for several families and bound the $k$-coalition number under disjoint union and graph join. We show that the set of possible sizes of $k$-coalition partitions forms an interval. Finally, we investigate $k$-coalition graphs and prove that every graph is a $k$-coalition graph.
\end{abstract}
\maketitle

\section{Introduction}
Domination in graphs is a rich and well-studied area in graph theory. One generalization of a dominating set is a \textit{coalition}. The term ``coalition'' has been used in several contexts in graph theory, network theory, and game theory. In this paper, we adopt the notion of coalitions introduced in 2020 by Haynes, Hedetniemi, Hedetniemi, McRae, and Mohan in \cite{HHHMM20} and expanded on in several subsequent works \cite{FairCoalition25, EdgeCoalition25,BHS25, HHHMMBounds}.

Two disjoint sets of vertices in a graph form a \defn{coalition} if neither is a dominating set, but their union is a dominating set. A \defn{coalition partition} is a partition of the vertex set in which every set either (1) forms a coalition with another set or (2) is a dominating set with exactly one vertex. Much of the previous work has focused on the \defn{coalition number}, which is the maximum number of sets in a coalition partition of a graph $G$. 

While coalitions were initially based on standard (vertex) domination, recent work has generalized this concept to other domination types, for example, independent coalition \cite{IndependentCoalition}, connected coalitions \cite{ConnectedCoalition}, double coalition \cite{DoubleCoalition1,DoubleCoalition2}, and total $k$-coalition \cite{TotalKCoalition}, which correspond to independent domination, connected domination, $2$-domination, and total $k$-domination (also referred to as $k$-tuple total domination), respectively.

Recently, Jafari, Alikhani, and Bakhshesh \cite{JAB25} introduced $k$-coalitions based on $k$-domination. A set $S\subseteq V$ is a \defn{$k$-dominating set} if every vertex in $V \setminus S$ has at least $k$ neighbors in $S$. Two disjoint sets, $S_1$ and $S_2$, form a \defn{$k$-coalition} if neither is individually a $k$-dominating set but $S_1 \cup S_2$ is a $k$-dominating set. When $k=1$, this reduces to the (standard) definition of coalitions, so we assume $k \geq 2$ throughout this paper. 
Analogously, the authors define a \defn{$k$-coalition partition} as a partition of the vertex set in which every set either forms a $k$-coalition with at least one other set in the partition or is itself a $k$-dominating set with exactly $k$ vertices. The \defn{$k$-coalition number}, denoted $\COk(G)$, is the maximum number of sets over all $k$-coalition partitions of $G$. In \cite{JAB25}, the authors compute the $k$-coalition number (with an emphasis on $k=2$) for specific graph classes, including complete graphs, cycles, paths, trees, and some corona products. They also characterize graphs with $2$-coalition number equal to the number of vertices in the graph. 

In 2025, Bre\v{s}ar, Henning, and Samadi in \cite{BHS25} established general upper and lower bounds on the $k$-coalition number, including the sharp bound on the $2$-coalition number of a graph $G$
  $$CO_2(G) \leq \left(\Delta - 2\left\lfloor\frac{\delta}{2}\right\rfloor + 1\right)\left(\left\lfloor\frac{\delta}{2}\right\rfloor + 1\right)+\left\lfloor\frac{\delta}{2}\right\rfloor + 1$$
in terms of its minimum and maximum degree. The authors provide an upper bound for the $2$-coalition number of an $n$-vertex tree $T$ 
and characterize the trees attaining this bound. They compute the $k$-coalition number exactly for cubic graphs and complete bipartite graphs, fully resolving an open case from \cite{JAB25}.
In a related development, Bre\v{s}ar, Klav\v{z}ar, and Samadi \cite{TotalKCoalition} introduced the concept of total $k$-coalitions, corresponding to total $k$-domination. If, in the definition of $k$-domination, we replace $V \setminus S$ with $V$, we get the definition of total $k$-domination, which requires that every vertex in $V$ (including every vertex in $S$) has at least $k$ neighbors in the $k$-dominating set $S$.

Jafari, Alikhani, and Bakhshesh proposed several questions and directions for future research. We address many of these questions and highlight our main findings in the outline below. 
The remainder of the paper is organized as follows:
\begin{enumerate} [leftmargin = 2cm]
    \item[\textbf{\Cref{sect: definitions and notation}}] includes formal definitions and relevant background on $k$-coalitions. 

    \item[\textbf{\Cref{sect: Bounds on k-coalition number}}] provides useful bounds on $\COk(G)$ in terms of the minimum and maximum vertex degree. We apply these bounds to a few examples. In particular, we determine the $k$-coalition number of complete multipartite graphs, generalizing the work of \cite{JAB25} and \cite{BHS25}. Since multipartite graphs are the graph join of independent sets, this relates nicely to \Cref{sect: graph operations}.
    
    \item[\textbf{\Cref{sect: graph operations}}] characterizes the behavior of the $k$-coalition number under graph operations. Motivated by the study of graph invariants under graph sums, we provide the following two results, where the latter is much harder:
    \begin{thm:graph operations}
            The $k$-coalition number is superadditive under graph joins and strictly subadditive under disjoint unions. 
    \end{thm:graph operations}
    We also study possible extremal behavior of the join or the disjoint union of a graph with itself $n$ times. 
    
    \item[\textbf{\Cref{sect: I_k(G)} }] introduces the minimum $k$-coalition number, denoted by $\cok(G)$, which is the minimum number of sets in a $k$-coalition partition.
    We give necessary and sufficient conditions for $\cok(G) = 2$, which is the minimum size of a $k$-coalition of $G$ when $|V(G)| \neq k$. We denote the set of possible sizes of the $k$-coalitions by $\kset(G)$, and show the following result. 
    \begin{thm:interval}
        The set $\kset(G)$ is the interval $[\cok(G),\COk(G)]$. 
    \end{thm:interval}

    \item[\textbf{\Cref{sect: k-coalition partition graphs}}] generalizes the notion of a coalition partition graph, studied in \cite{HHHMM23CCO}, \cite{HHHMM23DMGT}, and \cite{HHHMM23OM}. In \cite{HHHMM23CCO}, the authors demonstrate that every graph is a coalition graph (see \Cref{sect: k-coalition partition graphs} for the definition). We generalize the construction to prove the following theorem.
    \begin{thm:kco graph}
        For every graph $H$ and every $k \geq2$, there exists a graph $G$ and a $k$-coalition partition $\copart$ of $G$ such that the $k$-coalition graph of $G$ and $\copart$ equals $H$.
    \end{thm:kco graph}
    Furthermore, we structurally characterize all $k$-coalition graphs when $k =\delta(G)$  and $k >\delta(G)$, and show that every graph admits a $k$-coalition partition whose associated $k$-coalition graph is the complete graph.
\end{enumerate}

\section{Definitions and Notation} \label{sect: definitions and notation}

We let $G=(V(G), E(G))$ be a finite, simple, and undirected graph. We assume $G$ has at least two vertices, unless noted otherwise. The complement  of $G$, denoted by $\overline{G}$, is the graph on the same vertex set $V(G)$, where two vertices are adjacent in $\overline{G}$ if and only if they are not adjacent in $G$.
The \defn{open neighborhood} of a vertex $v$ is given by $ N_G(v) = \{ u \mid uv \in E(G) \}$ and the \defn{closed neighborhood} is $N_G[v] = N_G(v) \cup \{v\}$; we will omit the subscript when $G$ is evident from the context. We let $[a,b]$ denote the integer interval $\{n \in \ints \mid a \leq n \leq b\}$.
Two subsets $S, T \subseteq V(G)$ are \defn{intersecting} if $S \cap T \neq \emptyset$. If $|S| = 1$, then we call $S$ a \defn{singleton set}.
A set of vertices $S \subseteq V(G)$ is a \defn{dominating set} if every vertex in $V(G) \setminus S$ has a neighbor in $S$. 
The complete graph on $n$ vertices is denoted by $K_n$. The \defn{complete bipartite graph}, denoted by $K_{m,n}$, has vertex set $V(K_{m,n}) = S_1 \cup S_2$, where $|S_1| = m$, $|S_2| = n$, and two vertices are adjacent if and only if they belong to distinct partite sets. More generally, for $r\geq 2$, the \defn{complete $r$-partite graph} $K_{s_1, \dots, s_r}$ is the graph with vertex set $V = S_1 \cup S_2 \cup \dots \cup S_r$, where each $|S_i|= s_i$ and $2 \leq s_1 \leq s_2 \leq \dots \leq s_r$. Two vertices  $u \in S_i$ and $v \in S_j$ are adjacent if and only if $i \neq j$. The \defn{star} $S_n$ is the complete bipartite graph $K_{n,1}$.
A graph $G$ is \defn{$r$-regular} if every vertex in $V(G)$ has degree $r$.

\begin{definition}
    The \defn{join} of graphs $G$ and $H$, denoted $G + H$, is the graph with vertex set $V(G) \cup V(H)$ and edge set consisting of $E(G) \cup E(H)$ along with all possible edges joining a vertex of $G$ and a vertex of $H$. In other words, the edge set of $G+H$ is given by $$ E(G) \cup E(H) \cup \{gh\mid g\in V(G) \text{ and } h\in V(H)\}.$$ The \defn{disjoint union} of $G$ and $H$, denoted $G \sqcup H$, is the graph with vertex set $V(G) \cup V(H)$ and edge set $E(G) \cup E(H)$. 
    See \Cref{fig:Graph operation example} for examples.
\end{definition}

\begin{figure}[htb]
    \centering 
    \scalebox{0.9}{
       \begin{tikzpicture}[scale=1.5, every node/.style={circle, draw, fill=white, inner sep=1.5pt},  scale = 1.2
    ]

\node (A1) at (1,0.25) {};
\node (A2) at (1,1.25) {};
\node (A3) at (1.75,0.75) {};

\draw (A1) -- (A2) -- (A3) -- (A1);

\node (B1) at (3,0.5) {};
\node (B2) at (3,1) {};
\node (B3) at (3.5,1.5) {};
\node (B4) at (3.5,0) {};

\draw (B1) -- (B2) -- (B3) -- (B1);
\draw (B4) -- (B1) -- (B3);
\draw (B4) -- (B2) -- (B3);
\draw (B4) -- (B3);

\node (C1) at (6,0.25) {};
\node (C2) at (6,1.25) {};
\node (C3) at (6.75,0.75) {};

\draw (C1) -- (C2) -- (C3) -- (C1);

\node (D1) at (8,0.5) {};
\node (D2) at (8,1) {};
\node (D3) at (8.5,1.5) {};
\node (D4) at (8.5,0) {};

\draw (D1) -- (D2) -- (D3) -- (D1);
\draw (D4) -- (D1) -- (D3);
\draw (D4) -- (D2) -- (D3);
\draw (D4) -- (D3);

\foreach \i in {C1,C2,C3}
  \foreach \j in {D1,D2,D3,D4}
    \draw[draw = OliveGreen!60, thick] (\i) -- (\j);

\end{tikzpicture}}
    \caption{Left: The disjoint union $C_3 \sqcup K_4$. Right: The graph join $C_3 + K_4$.}
    \label{fig:Graph operation example}
\end{figure}

We will abuse notation and let $V(G)$ and $V(H)$ refer to the vertices of the copies of $G$ and $H$, respectively, in $G \sqcup H$ or $G + H$.

\section{Bounds on $k$-coalition number} \label{sect: Bounds on k-coalition number}
Previous work on $k$-coalitions by Jafari, Alikhani, and Bakhshesh \cite{JAB25} has primarily focused on establishing bounds for the $k$-coalition number of various graph classes in terms of minimum and maximum degree. We summarize some of their key results and tools below (and in the following section), as they will be useful.

We begin with an easy observation. 
\begin{observation} \label{Observation: No max size partition has k-dom set}
    Let $k \geq 2$. Suppose $G$ is a graph and $\copart$ is a $k$-coalition partition with maximum cardinality $\COk(G)$. Then no set in $\copart$ is a $k$-dominating set with exactly $k$ vertices. Equivalently, every set in $\copart$ forms a $k$-coalition with some other set.
\end{observation}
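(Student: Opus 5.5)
We argue by contradiction: assuming a maximum partition contains a set $A$ that is a $k$-dominating set with exactly $k$ vertices, we split $A$ into a $k$-coalition partition with strictly more sets. Let $\copart$ have $\COk(G)$ sets and let $A=\{a_1,\dots,a_k\}\in\copart$ be such a set.

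\emph{Step 1: split $A$ into singletons.} Replace $A$ by the $k$ singletons $\{a_1\},\dots,\{a_k\}$. Since $k\ge 2$, the new partition $\copart'$ has $|\copart|+k-1>|\copart|$ sets, so it remains to check that $\copart'$ is a $k$-coalition partition.

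\emph{Step 2: the singletons are not $k$-dominating.} A singleton $\{a_i\}$ is not $k$-dominating unless $V(G)=\{a_i\}$, because any vertex outside it has at most one neighbor in it, and $1<k$. As $|V(G)|\ge 2$, no singleton is $k$-dominating. Any subset of $A$ of size less than $k$ fails for the same reason, provided some vertex lies outside it.

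\emph{Step 3: each singleton has a coalition partner.} Take $B=\{a_2,\dots,a_k\}$, which is not $k$-dominating since $a_1$ lies outside $B$. We have $\{a_1\}\cup B=A$, which is $k$-dominating. The difficulty is that $B$ must be a single part of the partition.

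So, rather than splitting $A$ fully, we modify Step 1 and split $A$ into $\{a_1\}$ and $A\setminus\{a_1\}$. This gives two sets, and each forms a $k$-coalition with the other:
\begin{itemize}
\item neither is $k$-dominating, since each has fewer than $k$ vertices and misses a vertex of $A$;
\item their union $A$ is $k$-dominating.
\end{itemize}

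\emph{Step 4: the rest of the partition is unaffected.} Every other set $X\in\copart$ keeps its status. If $X$ was a $k$-dominating set with exactly $k$ vertices, it is still one. If $X$ formed a $k$-coalition with some $Y\in\copart$, then either $Y\neq A$ and $Y$ is still present, or $Y=A$, which would make $A$ not $k$-dominating, a contradiction. Hence no set had $A$ as its only partner.

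The modified partition therefore has $|\copart|+1$ sets and is a $k$-coalition partition, contradicting maximality. The equivalent formulation follows at once, since by definition every set in a $k$-coalition partition either is a $k$-dominating set with exactly $k$ vertices or forms a $k$-coalition with some other set.

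The only real obstacle is the one met in Step 3: the pieces must be paired with existing parts of the partition. The two-piece split resolves it, and Step 4 confirms that removing $A$ does not orphan any set, because a $k$-dominating set can never be a coalition partner.
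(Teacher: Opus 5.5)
Your proof is correct and is essentially the paper's argument: split the offending $k$-vertex $k$-dominating set into two nonempty pieces, which form a $k$-coalition, and note that no other set relied on it as a partner because a $k$-dominating set can never be a coalition partner; this yields a larger $k$-coalition partition. You justify both facts more explicitly than the paper does (each piece has fewer than $k$ vertices and misses a vertex of $A$), and the abandoned all-singletons split in Steps 1--3 can simply be deleted.
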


\begin{proof}
    Let $\copart$ be some maximum-cardinality $k$-coalition partition of $G$. Suppose, for contradiction, that $X \in \copart$ is a $k$-dominating set of $G$ with $k$ vertices. Then there is no set that forms a $k$-coalition with only $X$. Since $X$ has $k\geq 2$ vertices, we may split $X$ into two nonempty sets $X_1$ and $X_2$. Then $X_{1}$ and $X_{2}$ form a $k$-coalition.  Let $\copart' = \copart \setminus\{X\} \cup \{X_1, X_2\}$. Then $\copart'$ is a $k$-coalition partition of $G$ with cardinality $\COk(G) + 1$, contradicting the maximality of $\copart$.
\end{proof}

The authors compute the $k$-coalition number for complete graphs $K_n$; we restate the result below. 
\begin{lemma} \cite[Observation~3.1]{JAB25} \label{Thm: CO_k(K_n)}
    For every $2 \leq k \leq n-1$, the $k$-coalition number $\COk(K_n) = n-k+2$.
\end{lemma}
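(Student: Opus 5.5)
We prove the lower bound $\COk(K_n) \geq n-k+2$ with an explicit partition, and the matching upper bound by a counting argument. The starting point is that in $K_n$, a set $S$ is $k$-dominating exactly when $|S| \geq k$ or $S = V(K_n)$. Indeed, every vertex outside $S$ is adjacent to all of $S$, so it has $|S|$ neighbors in $S$. If $S$ is not all of $V$, this count is at least $k$ iff $|S|\geq k$. Since $k \leq n-1$, the whole vertex set has at least $k$ vertices anyway, so the condition reduces to $|S| \geq k$. Consequently, two disjoint sets $S_1,S_2$ form a $k$-coalition iff $|S_1|,|S_2| \leq k-1$ and $|S_1|+|S_2| \geq k$.

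\emph{Lower bound.} Partition $V(K_n)$ into one set $A$ with $|A| = k-1$ and $n-k+1$ singletons. This gives $n-k+2$ sets, and $n-k+1\geq 2$ since $k\le n-1$. No set has size $\geq k$, because $k-1 \geq 1$ and singletons have size $1 \leq k-1$. Each singleton $\{v\}$ together with $A$ has $k$ vertices, so they form a $k$-coalition. The set $A$ forms a $k$-coalition with any singleton. Hence this is a $k$-coalition partition, and $\COk(K_n) \geq n-k+2$.

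\emph{Upper bound.} Take a $k$-coalition partition $\copart$ of maximum size. By \Cref{Observation: No max size partition has k-dom set}, every set in $\copart$ forms a $k$-coalition with some other set, so every set has size at most $k-1$. Let $X$ be a largest set in $\copart$, with $|X| = m \leq k-1$. Choose any set $Y\ne X$ and a partner $Z$ of $Y$. Then $|Y|+|Z|\ge k$, and the largest sum of two distinct sets is $m + (\text{second largest})$. Only the following two facts are needed:
\begin{itemize}
\item There exist two sets with total size $\geq k$.
\item Every other set has size $\geq 1$.
\end{itemize}
If $\copart$ has $t$ sets, these give $n \geq k + (t-2)$, so $t \leq n-k+2$.

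The main step to handle carefully is the upper bound, which relies on the observation that a coalition pair accounts for at least $k$ vertices. The remaining $t-2$ sets are nonempty, so each contributes at least one more vertex, and this counting finishes the argument. No case analysis beyond the characterization of $k$-dominating sets in $K_n$ should be required.
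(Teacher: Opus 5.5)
Your proof is correct. The cardinality characterization of $k$-dominating sets in $K_n$ (valid because $n\ge k+1$), the partition into one $(k-1)$-set plus $n-k+1$ singletons, and the count $n\ge k+(t-2)$ together give exactly $\COk(K_n)=n-k+2$. The paper does not prove this statement itself; it quotes it from Jafari, Alikhani, and Bakhshesh. Your argument is the natural one, and it rests on the same size-based reasoning the paper uses later to compute $\cok(K_n)$.

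Two remarks to tidy it up:
\begin{itemize}
\item The discussion of a largest set $X$ and the second-largest set is never used and can be deleted.
\item The appeal to the observation about maximum partitions is optional. In any $k$-coalition partition, either some coalition pair covers at least $k$ vertices, giving $t\le n-k+2$, or some set is a $k$-dominating set with exactly $k$ vertices, giving $t\le n-k+1$.
\end{itemize}
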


When $k > \Delta(G)$, no vertex has k neighbors, so a k-dominating set must contain
every vertex, so we obtain the following observation.
\begin{observation} \label{Observation: when k > max degree}
    Let $G$ be a graph. If $k > \Delta(G)$, then $\COk(G) = 2$.
\end{observation}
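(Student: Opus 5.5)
The plan is to first pin down exactly which sets are $k$-dominating when $k > \Delta(G)$, and then read off which partitions can be $k$-coalition partitions. \textbf{Step 1.} If $S \subsetneq V(G)$, pick $v \in V(G)\setminus S$. Then $v$ has at most $\deg(v) \le \Delta(G) < k$ neighbors in $S$. So $S$ is not $k$-dominating. Conversely, $V(G)$ is vacuously $k$-dominating. Hence the unique $k$-dominating set of $G$ is $V(G)$ itself.

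\textbf{Step 2 (upper bound).} Let $\copart$ be any $k$-coalition partition. First suppose some set $X \in \copart$ is a $k$-dominating set with exactly $k$ vertices. By Step 1, $X = V(G)$, so $\copart = \{V(G)\}$ has one set; this only occurs when $|V(G)| = k$. Otherwise some set $X_1$ forms a $k$-coalition with some $X_2 \in \copart$. Then $X_1 \cup X_2$ is $k$-dominating, so $X_1 \cup X_2 = V(G)$ by Step 1. Since the sets of $\copart$ are disjoint, $\copart = \{X_1, X_2\}$. In all cases $|\copart| \le 2$.

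\textbf{Step 3 (lower bound).} Because $G$ has at least two vertices, we can split $V(G)$ into two nonempty disjoint parts $X_1, X_2$. Each part is a proper subset of $V(G)$, so neither is $k$-dominating. Their union is $V(G)$, which is $k$-dominating. Thus $\{X_1, X_2\}$ is a $k$-coalition partition, and $\COk(G) \ge 2$. Combining with Step 2 gives $\COk(G) = 2$.

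There is no real obstacle here. The only point needing care is the degenerate case in Step 2, where a single $k$-vertex $k$-dominating set forms the whole partition. Even then the partition has size $1 \le 2$, so it does not affect the maximum.
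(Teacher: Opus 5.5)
Your proof is correct and follows the same idea as the paper, which justifies the observation in one line: when $k > \Delta(G)$, the only $k$-dominating set is $V(G)$ itself. You have spelled out the parts the paper leaves implicit, namely the upper bound (any coalition pair must cover all of $V(G)$, plus the degenerate one-set partition when $|V(G)| = k$) and the lower bound (any split of $V(G)$ into two nonempty parts is a $k$-coalition partition).
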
 

We now consider $k \leq \Delta(G)$. 
\begin{lemma} [{\cite[Lemma~2.4] {JAB25}}] \label{Lemma: max number of coalition partners}
Let $\copart$ be a $k$-coalition partition of a graph $G$. If $k \leq \Delta(G)$, then each set $X \in \copart$ forms a $k$-coalition with at most $\Delta(G) -k +2$ sets of $\copart$.   
\end{lemma}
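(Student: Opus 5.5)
Fix $X \in \copart$ that is not $k$-dominating; in a $k$-coalition, $X$ and each partner are both non-$k$-dominating. Since $X$ is not $k$-dominating, there is a witness vertex $v \in V(G)\setminus X$ with $|N(v)\cap X| \le k-1$. The plan is to show that every coalition partner $Y$ of $X$ is forced to meet a small set determined by $v$, and then use the disjointness of the sets in $\copart$ to bound how many partners there can be.

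\textbf{Case 1: $Y$ contains $v$.} Since the sets in $\copart$ are disjoint, at most one partner falls in this case.

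\textbf{Case 2: $Y$ does not contain $v$.} Then $v \notin X\cup Y$. Because $X\cup Y$ is $k$-dominating, $v$ has at least $k$ neighbors in $X\cup Y$. At most $k-1$ of these lie in $X$, so $Y$ contains at least $k - |N(v)\cap X| \ge 1$ vertices of $N(v)\setminus X$.

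To count the partners in Case 2, let $a = |N(v)\cap X| \le k-1$. The set $N(v)\setminus X$ has size $\deg(v) - a \le \Delta(G) - a$. Each Case-2 partner uses at least $k-a$ of these vertices, and distinct partners use disjoint vertices. Hence the number of Case-2 partners is at most
\[\left\lfloor \frac{\Delta(G)-a}{k-a}\right\rfloor.\]
Adding the at most one Case-1 partner gives at most $1 + \lfloor(\Delta(G)-a)/(k-a)\rfloor$ partners in total.

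The remaining work is to bound this uniformly over $a \in [0,k-1]$. The main point is that the worst case is $a=k-1$, which gives $1+\Delta(G)-k+1 = \Delta(G)-k+2$. For the other values we need
\[\left\lfloor \frac{\Delta(G)-a}{k-a}\right\rfloor \le \Delta(G)-k+1 .\]
Write $m = k-a \ge 1$. Then $\Delta(G) - a = (\Delta(G)-k) + m$, so
\[\frac{\Delta(G)-a}{k-a} = \frac{\Delta(G)-k}{m} + 1 \le \Delta(G)-k+1,\]
using $\Delta(G) \ge k$ and $m \ge 1$. This gives the bound $\Delta(G)-k+2$ for every $a$.

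If $X$ itself is a $k$-dominating set with $k$ vertices, then $X$ forms no coalitions at all, so the bound holds trivially.
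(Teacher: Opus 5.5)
Your proof is correct and complete. The paper does not prove this lemma itself; it quotes it from \cite{JAB25}, so there is no in-paper argument to compare against.

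Your argument is the natural extension of the witness-vertex argument used for ordinary coalitions. You take a vertex $v\notin X$ with $a=|N(v)\cap X|\le k-1$. Every partner of $X$ either contains $v$ or contains at least $k-a$ vertices of $N(v)\setminus X$, which gives at most $1+\lfloor(\Delta(G)-a)/(k-a)\rfloor$ partners. This is at most $\Delta(G)-k+2$ because $(\Delta(G)-k)/(k-a)\le\Delta(G)-k$.

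You also correctly handle the case $a<k-1$. There $N(v)\setminus X$ may contain more than $\Delta(G)-k+1$ vertices, but each partner must use at least $k-a\ge 2$ of them, so the bound still holds.
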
 

When $k>\delta(G)$, \Cref{Lemma: max number of coalition partners} directly implies the following upper bound on the $k$-coalition number, since every set in the $k$-coalition partition must form a $k$-coalition with the set containing the vertices of degree $\delta(G)$.

\begin{lemma}[{\cite[Theorem~2.5] {JAB25}}] \label{Theorem: k > min deg CO_k lower bound}
    For any graph $G$ with maximum degree $\Delta(G)$, minimum degree $\delta(G)$, and $k > \delta(G)$, we have $$\COk(G) \leq \Delta(G) - k +3. $$
\end{lemma}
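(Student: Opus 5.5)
Fix a vertex $v$ of degree $\delta(G)$ and a $k$-coalition partition $\copart$ of $G$; we bound $|\copart|$ by counting the sets that must pair with the set $X_v\in\copart$ containing $v$. The proof has three steps.

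\emph{Step 1: the degenerate cases.} If $k>\Delta(G)$, then \Cref{Observation: when k > max degree} gives $\COk(G)=2$. Since $k>\delta(G)$ forces $k\geq \delta(G)+1$, we get $\Delta(G)-k+3\geq 2$ whenever $k\leq\Delta(G)+1$. If instead $k\geq\Delta(G)+2$, the bound $\Delta(G)-k+3\leq 1$ would fail. I would check whether the statement tacitly assumes $k\le \Delta(G)$; the cited Lemma needs this, and the bound is meant in that regime. So assume $\delta(G)<k\leq\Delta(G)$.

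\emph{Step 2: no singleton $k$-dominating set and $X_v$ is not $k$-dominating.}
\begin{itemize}
\item A set $S$ with $v\notin S$ cannot be $k$-dominating, since $v$ has only $\delta(G)<k$ neighbors. So every $k$-dominating set contains $v$.
\item Suppose some set $Y\in\copart$ is $k$-dominating with exactly $k$ vertices. Then $Y=X_v$, and every other set of $\copart$ would have to form a $k$-coalition with some set. Their union with $Y$ cannot count, because a coalition requires neither set to be $k$-dominating. The union of two sets not containing $v$ is also not $k$-dominating. So each other set has no partner, and $\copart=\{X_v\}$. Then $|\copart|=1$ and the bound holds trivially.
\item Otherwise, every set in $\copart$ forms a $k$-coalition with some other set, and $X_v$ itself is not $k$-dominating.
\end{itemize}

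\emph{Step 3: every other set must partner $X_v$.} Take any $X\in\copart$ with $X\neq X_v$, and let $Y$ be a coalition partner of $X$. The union $X\cup Y$ is $k$-dominating, so it contains $v$ by Step 2. Since $v\notin X$, we get $v\in Y$, hence $Y=X_v$. Thus every set other than $X_v$ forms a $k$-coalition with $X_v$. By \Cref{Lemma: max number of coalition partners}, $X_v$ has at most $\Delta(G)-k+2$ coalition partners. Therefore
\[
|\copart|\leq 1+(\Delta(G)-k+2)=\Delta(G)-k+3,
\]
and taking $\copart$ of maximum size gives the claim.

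The main subtlety is the edge case $k>\Delta(G)+1$ in Step 1, where the stated bound would go below $2$. I expect the intended hypothesis is $k\le\Delta(G)$, matching the remark preceding the statement, which invokes \Cref{Lemma: max number of coalition partners} under that assumption. I would state the proof under that hypothesis and note that \Cref{Observation: when k > max degree} covers the remaining range.
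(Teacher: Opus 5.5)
Your proof is correct and is the same argument the paper gives: every set other than the one containing a vertex $v$ of degree $\delta(G)<k$ must form a $k$-coalition with that set, and \Cref{Lemma: max number of coalition partners} then limits it to at most $\Delta(G)-k+2$ partners. Your edge-case flag is also right. As written, without $k\le\Delta(G)+1$, the bound fails for $k\ge\Delta(G)+2$; for example $CO_3(K_2)=2>1$. So the hypothesis $k\le\Delta(G)$ from the cited result is needed, and for $k=\Delta(G)+1$ the bound follows from \Cref{Observation: when k > max degree}.
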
 

When $k \leq \delta(G)$, Bre\v{s}ar, Henning, and Samadi \cite{BHS25} obtain the following lower bound on the $k$-coalition number.

\begin{lemma} [{\cite[Theorem~2.1]{BHS25}}]\label{Theorem: min degree lower bound on CO_k}
    For any graph $G$ with minimum degree $\delta(G)$ and $ 2 \leq k \leq \delta(G)$, $$\COk(G) \geq \delta(G) - k +3.$$ 
\end{lemma}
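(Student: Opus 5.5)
We build an explicit $k$-coalition partition of $G$ with $\delta(G)-k+3$ sets. Write $\delta=\delta(G)$ and fix a vertex $v$ of minimum degree, so that $|N(v)|=\delta$.

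\textbf{The partition.} Choose $\delta-k+1$ distinct neighbors $u_1,\dots,u_{\delta-k+1}$ of $v$; this is possible since $k\ge 2$ gives $\delta-k+1\le\delta-1$. Let $W$ be the set of the remaining $k-1$ neighbors of $v$. Define
\[
A=V(G)\setminus\{v,u_1,\dots,u_{\delta-k+1}\},
\]
which contains $W$. The candidate partition is
\[
\copart=\{\{v\},\{u_1\},\dots,\{u_{\delta-k+1}\},A\},
\]
which has $\delta-k+3$ sets, provided $A$ is nonempty. This holds because $|W|=k-1\ge 1$.

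\textbf{No set is $k$-dominating by itself.} Each $\{u_i\}$ and $\{v\}$ is a singleton. Since $G$ has more than $k$ vertices (it has $\delta+1\ge k+1$), some outside vertex would need $k\ge2$ neighbors in a one-vertex set, which is impossible. For $A$, the vertex $v\notin A$ has exactly $|W|=k-1<k$ neighbors in $A$, so $A$ is not $k$-dominating.

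\textbf{Coalitions with $A$.} For each $i$, consider $A\cup\{u_i\}$. The vertex $v$ now has $k$ neighbors in it. The other excluded vertices are $u_j$ for $j\ne i$, together with $v$ unless the singleton is $\{v\}$.
\begin{itemize}
\item Each excluded $u_j$ has at least $\delta$ neighbors in total. Its neighbors outside $A\cup\{u_i\}$ lie among $v$ and the other $u$'s.
\end{itemize}
This is the main obstacle: $u_j$ may have many neighbors among the $u$'s and $v$, so the union need not $k$-dominate $u_j$.

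\textbf{How I would handle the obstacle.} Instead of fixing singletons $\{u_i\}$, I would make the pieces more robust. Take a $k$-dominating set and split it greedily, following the strategy of Brešar--Henning--Samadi: start from the trivial partition $\{V\}$ and peel off vertices one at a time so that every remaining singleton still forms a coalition with the big set.

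Concretely, I would take $A$ to be the complement of $N[v]$ together with $W$. The excluded set is then $N[v]\setminus W$, which has size $\delta-k+2$. A vertex $u_j\in N(v)$ has at least $\delta$ neighbors, and at most $\delta-k+1$ of them lie in the excluded set. Hence $u_j$ has at least $k-1$ neighbors in $A$, and adding any one of its own neighbors supplies the $k$-th.

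This means the coalition partner of $A$ should be a singleton adjacent to all excluded vertices. That is not generally available, so I would rather pair singletons with each other: show that $A$ together with any two singletons is $k$-dominating. Failing that, I would merge $A$ with a singleton to obtain the coalition partner, and choose which $u$'s to peel off via a maximality argument. The argument would go as follows:
\begin{enumerate}
\item Repeatedly remove from $A$ a vertex whose removal keeps $A$ non-$k$-dominating while $A$ plus that vertex is $k$-dominating.
\item Show, using the degree bound $\delta$, that this process runs for at least $\delta-k+2$ steps.
\item Conclude that each removed vertex forms a $k$-coalition with the final $A$, and that $A$ forms one with any of them.
\end{enumerate}
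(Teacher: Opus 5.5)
As submitted, this does not prove the lemma. The argument you finally commit to is the peeling procedure, and its decisive step~2 (that it runs for at least $\delta-k+2$ rounds) is only asserted, although that is the entire content of the statement. Step~3 is also unsupported: after a vertex $x$ has been peeled off, later removals shrink $A$, so $A\cup\{x\}$ need not remain $k$-dominating.

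Meanwhile, the construction you abandoned is correct; the ``obstacle'' is a miscount. Put $U=V(G)\setminus A=\{v,u_1,\dots,u_{\delta-k+1}\}$, so $|U|=\delta-k+2$, and let $x\in U$ be the singleton joined to $A$. For any $u_j\in U\setminus\{v,x\}$, every neighbour of $u_j$ outside $A\cup\{x\}$ lies in $U\setminus\{u_j,x\}$. That set has exactly $\delta-k$ vertices, so $|N(u_j)\cap(A\cup\{x\})|\ge\deg(u_j)-(\delta-k)\ge k$. When $x=u_i$, the vertex $v$ has the $k$ neighbours $W\cup\{u_i\}$ in $A\cup\{x\}$. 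Hence every singleton forms a $k$-coalition with $A$, and your first partition already has $\delta-k+3$ sets.

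Your later estimate (``$u_j$ has at least $k-1$ neighbours in $A$'') drops the case split that makes this work. If $x\notin N(u_j)$, then $u_j$ has at most $\delta-k$ neighbours in $U$, and hence already $k$ in $A$. If $x\in N(u_j)$, then $x$ supplies the $k$-th. So no singleton adjacent to all of $U$ is needed. This is also where choosing $v$ of minimum degree matters, since you use $\deg(u_j)\ge\deg(v)$.

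For comparison, the paper gives no proof of this statement; it cites \cite{BHS25}. However, the proof of \Cref{Lemma: lower interval for k < delta (G)} contains an argument reaching the same partition from the opposite direction. Instead of shrinking a dominating set, it grows a non-$k$-dominating set $X_1$ until it is maximal. Then $X_1\cup\{u\}$ is $k$-dominating for every $u\notin X_1$ by construction. Maximality forces $|N(v)\cap X_1|=k-1$ and $V(G)\setminus N[v]\subseteq X_1$ for any $v$ not $k$-dominated by $X_1$. Splitting $V(G)\setminus X_1$ into singletons then gives $\deg(v)-k+3\ge\delta-k+3$ sets.

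In that route the coalition property is automatic, and the counting goes into the size of the partition. In your direct construction the size is automatic, and the counting goes into verifying the coalitions, which is exactly the computation above. Your peeling idea was gesturing at the maximality route, but growing the non-dominating set, rather than shrinking the dominating one, is what keeps the invariant for free.
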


It follows from \Cref{Theorem: min degree lower bound on CO_k} that sufficiently large minimum degree guarantees large $k$-coalition number. 
Therefore, there does not exist a graph class with unbounded minimum degree and bounded $k$-coalition number. However, there exist graph classes with unbounded maximum degree and bounded $k$-coalition number. For example, for the class of star graphs $\{S_n\mid n \geq 1\}$ we have $\COk(S_n) =2$ for all $n$.

\begin{theorem} [{\cite[Theorem~2.8]{JAB25}}] \label{Theorem: Regular Coalition Bound}
Let $G=(V,E)$ be a graph with $|V| \geq 3$ and $k \geq 1$. If $G$ is a $k$-regular graph, then $3 \leq \COk(G) \leq 4$. 
\end{theorem}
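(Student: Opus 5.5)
The plan is to handle the two inequalities separately. The lower bound should follow almost immediately from an earlier result, while the upper bound needs a structural argument about which pairs of sets can form $k$-coalitions.

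\emph{Lower bound.} For $k\ge 2$, a $k$-regular graph has $\delta(G)=k$. So \Cref{Theorem: min degree lower bound on CO_k} applies and gives $\COk(G)\ge \delta(G)-k+3=3$. If we also want to cover $k=1$, then $G$ is a perfect matching with at least two edges, and I would build the partition directly. Fix an edge $uv$, and let $X_1$ consist of one endpoint of every other edge. Then none of $X_1$, $\{u\}$, $\{v\}$ is dominating, because each misses the edge $uv$ or some other edge. On the other hand, both $X_1\cup\{u\}$ and $X_1\cup\{v\}$ are dominating. Hence $\{X_1,\{u\},\{v\}\}$ is a $1$-coalition partition with three sets.

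\emph{Upper bound.} Suppose $\copart$ is a $k$-coalition partition of maximum cardinality, and assume $|\copart|\ge 5$ for contradiction. By \Cref{Observation: No max size partition has k-dom set}, every set of $\copart$ forms a $k$-coalition with some other set. Define the \emph{partner graph} on $\copart$, in which two sets are adjacent when they form a $k$-coalition. This graph has minimum degree at least $1$. Since $k=\Delta(G)$, \Cref{Lemma: max number of coalition partners} shows it has maximum degree at most $\Delta(G)-k+2=2$.

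The key observation uses regularity. If $D$ is $k$-dominating, then every vertex outside $D$ has at least $k$ neighbors in $D$ but degree exactly $k$, so all of its neighbors lie in $D$. Now suppose the partner graph contains two disjoint edges $\{A,B\}$ and $\{C,D\}$, and let $E$ be a fifth set with a vertex $w\in E$. Since $w\notin A\cup B$, we get $N(w)\subseteq A\cup B$. Since $w\notin C\cup D$, we get $N(w)\subseteq C\cup D$. These two sets are disjoint and $N(w)\neq\emptyset$ because $k\ge1$, which is a contradiction.

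So the partner graph has no matching of size $2$. A graph with at least $5$ vertices, minimum degree at least $1$ and maximum degree at most $2$ always has two disjoint edges. Indeed, a graph with no $2$-matching and no isolated vertices is a star or a triangle, and a star with center degree at most $2$ spans at most $3$ vertices. Hence $|\copart|\le 4$.

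The main obstacle is this upper bound. Specifically, one must see that regularity forces the neighborhoods of vertices outside a $k$-dominating set to lie entirely inside it, and then convert that fact into the ``no two disjoint coalition pairs'' statement. After that, the combinatorics of the partner graph is routine.
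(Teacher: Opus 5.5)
The paper does not prove this statement; it quotes it from \cite{JAB25}, so your argument has to stand on its own. Your upper bound is correct. In a $k$-regular graph, every vertex outside a $k$-dominating set has its whole neighbourhood inside that set, so two disjoint coalition pairs leave no room for a vertex of a fifth set. A partner graph on at least five vertices with minimum degree at least $1$ and maximum degree at most $2$ must contain two disjoint edges, which gives the contradiction.

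The same regularity observation also yields the degree bound, so you could drop \Cref{Lemma: max number of coalition partners}. Suppose $x\notin X$ has at most $k-1$ neighbours in $X$, and $Y,Y'$ are two partners of $X$ not containing $x$. Then $N(x)\subseteq (X\cup Y)\cap(X\cup Y')=X$, which is impossible. Hence every set has at most two partners. Your lower bound for $k\ge 2$ via \Cref{Theorem: min degree lower bound on CO_k} is also fine.

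The one concrete error is your $k=1$ construction. If $X_1$ contains only one endpoint of each edge other than $uv$, the remaining endpoints lie in none of $X_1$, $\{u\}$, $\{v\}$. So $\{X_1,\{u\},\{v\}\}$ is not a partition of $V$. The fix is to put those endpoints into $X_1$, i.e.\ take $X_1=V\setminus\{u,v\}$. This set still fails to dominate $u$, while $X_1\cup\{u\}=V\setminus\{v\}$ and $X_1\cup\{v\}=V\setminus\{u\}$ are dominating.

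In fact, for any edge $uv$ the partition $\{\{u\},\{v\},V\setminus\{u,v\}\}$ works uniformly for every $k\ge 1$:
\begin{itemize}
\item $V\setminus\{u,v\}$ contains only $k-1$ neighbours of $u$, so it is not $k$-dominating.
\item $V\setminus\{v\}\supseteq N(v)$ and $V\setminus\{u\}\supseteq N(u)$, so both unions are $k$-dominating.
\item No singleton is $k$-dominating: this is trivial for $k\ge 2$, and for $k=1$ it holds because $\deg(u)=1<|V|-1$.
\end{itemize}
So the lower bound needs no citation at all.

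Finally, for $k=1$ you invoke \Cref{Observation: No max size partition has k-dom set}, which is stated only for $k\ge 2$. Its conclusion still holds here, because a $1$-regular graph on at least four vertices has no dominating singleton, but you should say this explicitly.
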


As an application of the results above, we bound and compute the $k$-coalition number of two graph classes: trees and hypercubes.

\begin{theorem} Let $k \geq 2$. 
\begin{enumerate}
    \item For every tree $T$ with maximum degree $\Delta$ and $k \leq \Delta$, we have $\COk(T) \leq \Delta - k +3$.
   \item For every $k \leq \Delta$ and $n \in [2, \Delta - k +3]$, there exists a tree $T$ such that $\COk(T) = n$ and $\Delta(T) = \Delta$. In particular, there exists a tree such that equality holds in (1).
   \end{enumerate}
\end{theorem}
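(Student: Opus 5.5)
Part (1) should follow directly from the earlier degree bounds. A tree $T$ with at least two vertices has a leaf, so $\delta(T)=1<2\le k$. \Cref{Theorem: k > min deg CO_k lower bound} then gives $\COk(T)\le \Delta-k+3$ at once. The only thing to check is that the hypotheses of that lemma hold for every tree, which they do.

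For part (2), I will first build a tree attaining $\Delta-k+3$, and then modify it to get every smaller value.

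\emph{Construction for the maximum, case $k+1\le\Delta$.} Take a central vertex $v$ with neighbors $u_1,\dots,u_\Delta$, and attach $k$ pendant leaves to each $u_i$. Then $\deg u_i=k+1\le \Delta$ and $\Delta(T)=\Delta$. Let $L$ be the set of all leaves. Every leaf has degree $1<k$, so every $k$-dominating set contains $L$. A set containing $L$ automatically $k$-dominates each $u_i$, since $u_i$ has $k$ leaf neighbors. Hence a set $S\supseteq L$ is $k$-dominating if and only if $v\in S$ or $S$ contains at least $k$ of the $u_i$.

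Now take
\[
A=L\cup\{u_1,\dots,u_{k-1}\},\qquad \{v\},\qquad \{u_k\},\ \dots,\ \{u_\Delta\}.
\]
$A$ is not $k$-dominating, because $v$ has only $k-1$ neighbors in $A$. No singleton is $k$-dominating, because it misses $L$. Each singleton together with $A$ is $k$-dominating. So this is a $k$-coalition partition with $1+1+(\Delta-k+1)=\Delta-k+3$ sets, and part (1) gives the matching upper bound.

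\emph{Case $k=\Delta$.} Here I would hang $k-1$ leaves on each $u_i$ and check that $u_i$ is still dominated, using that $v$ or the other $u_j$ lie in the relevant union. Alternatively I would use the complete-graph-like bound $n=3$ directly. I would verify these details separately.

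\emph{Intermediate values $n\in[3,\Delta-k+3]$.} I would keep $v$ of degree $\Delta$ but give pendant stars only to $u_1,\dots,u_m$ for a suitable $m$, and make the remaining $u_i$ leaves. Those remaining $u_i$ then become forced into every $k$-dominating set, which shrinks the number of free singletons. The value $n=2$ is attained by a star $K_{1,\Delta}$: every $k$-dominating set contains all $\Delta$ leaves, and pairing sets forces at most one leaf outside a large set, so only two parts are possible.

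The lower bound $\COk(T)\ge n$ comes from the explicit partition above. The main obstacle is the matching \emph{upper} bound $\COk(T)\le n$ for each intermediate tree. My plan there is to argue as in \Cref{Lemma: max number of coalition partners}. Fix the part $X$ containing a chosen forced leaf. Every part must partner with $X$ or with the part holding the other leaves. Any partner's union must contain all leaves together with either $v$ or $k$ free neighbors $u_i$. Since only $m$ of the $u_i$ are free, counting how many disjoint sets can each complete such a union should give at most $m-k+3$ parts. Choosing $m=n+k-3$ then yields exactly $n$.
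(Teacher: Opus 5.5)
Part (1), your maximum construction for $k<\Delta$, and the star for $n=2$ are fine. Your deferred case $k=\Delta$ also works: with $k-1$ leaves on each $u_i$, the parts $L\cup\{u_1,\dots,u_{k-1}\}$, $\{v\}$, $\{u_k\}$ form a $k$-coalition partition.

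\textbf{The gap: the intermediate values $n\in[3,\Delta-k+2]$.} Your family cannot produce them. Because you keep $\deg(v)=\Delta$, the vertices $u_{m+1},\dots,u_\Delta$ are leaf neighbors of $v$. They lie in every $k$-dominating set and count toward the $k$ neighbors $v$ needs. So the step ``any partner's union must contain all leaves together with either $v$ or $k$ free neighbors $u_i$'' is false. In fact, a set $S\supseteq L$ is $k$-dominating iff $v\in S$ or $|S\cap\{u_1,\dots,u_m\}|\ge j$, where $j=k-(\Delta-m)$.

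\emph{If $j\le 0$:} $L$ is itself $k$-dominating with more than $k$ vertices. Then no part can contain $L$, which forces $\COk(T)=2$.

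\emph{If $j\ge 1$:} the parts $L\cup\{u_1,\dots,u_{j-1}\}$, $\{v\}$, $\{u_j\},\dots,\{u_m\}$ form a $k$-coalition partition with $m-j+3=\Delta-k+3$ parts.

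So the family only yields the values $2$ and $\Delta-k+3$. For instance, with $\Delta=5$ and $k=2$, your choice $m=4$ (meant for $n=5$) gives $\COk(T)=6$, and $m=3$ (meant for $n=4$) gives $\COk(T)=2$. Leaf neighbors of $v$ and free $u_i$'s trade off one for one. As long as $\deg(v)=\Delta$, the count is pinned at $\Delta-k+3$.

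\textbf{The missing idea: lower the root's degree to $n+k-3$ and realize $\Delta$ elsewhere.} The paper takes a root $r$ with $k-1$ leaf children and $n-2$ further children $v_1,\dots,v_{n-2}$, each carrying $\Delta-1$ pendant leaves. Then $L$ misses $k$-dominating $r$ by exactly one. Hence $L,\{r\},\{v_1\},\dots,\{v_{n-2}\}$ is a $k$-coalition partition with $n$ parts. A direct check handles $k=\Delta$, where only $n\le 3$ is needed.

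With this choice the upper bound needs no counting of unions. There are only $n-1$ non-leaf vertices, so a partition with a part containing $L$ has at most $n$ parts. If no part contains $L$, every coalition pair must jointly cover $L$, which forces the partition to have exactly two parts.
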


\begin{figure} [htb]
    \centering
    \scalebox{0.8}{
    \begin{tikzpicture}[scale=1.5, every node/.style={circle, draw, fill=white, inner sep=1.5pt},  scale = 0.8
    ]

\node (r) at (0,0) [label=above:$r$] {};

\node (v1) at (-4,-1.5) [label=left:$v_1$] {};
\node (vn2) at (-1,-1.5) [label=left:$v_{n-2}$] {};
\node (vn1) at (1,-1.5) [label=left:$v_{n-1}$] {};
\node (vnk) at (4,-1.5) [label=right:$v_{n+k-3}$] {};

\node[draw=none, fill=none] at (-2.7,-1.5)  {$\dots$};
\node[draw=none, fill=none] at (2,-1.5)   {$\dots$};

\draw (r) -- (v1);
\draw (r) -- (vn2);
\draw (r) -- (vn1);
\draw (r) -- (vnk);

\node (u11) at (-5,-3) {};
\node (u1d) at (-3,-3) {};
\node[draw=none, fill=none] at (-4,-3) {$\dots$};

\draw (v1) -- (u11);
\draw (v1) -- (u1d);

\node (un21) at (-2,-3) {};
\node (un2d) at (0,-3) {};
\node[draw=none, fill=none] at (-1,-3) {$\cdots$};

\draw (vn2) -- (un21);
\draw (vn2) -- (un2d);

\node [draw = none] (bvn1) at (1,-1.8)  {};
\node [draw = none]  (bvnk) at (4,-1.8) {};
\draw [decorate, decoration={brace, mirror, amplitude=6pt}, thick]
  (bvn1.south west) -- (bvnk.south east) node[draw = none, fill = none, midway, below=-10pt] {$k-1$ leaves};

  \node (u11) at (-5,-3) {};
\node (u1d) at (-3,-3) {};

\node [draw = none] (bun11) at (-5.2,-3.6)  {};
\node [draw = none]  (bun1d) at (-2.8,-3.6) {};
\draw [decorate, decoration={brace, mirror, amplitude=6pt}, thick]
  (bun11.south west) -- (bun1d.south east) node[draw = none, fill = none, midway, below=-10pt] {$\Delta-1$ leaves};

\node [draw = none] (bun21) at (-2.2,-3.6)  {};
\node [draw = none]  (bun2d) at (0.2,-3.6) {};
\draw [decorate, decoration={brace, mirror, amplitude=6pt}, thick]
  (bun21.south west) -- (bun2d.south east) node[draw = none, fill = none, midway, below=-10pt] {$\Delta-1$ leaves};
\end{tikzpicture}
    }
    \caption{Construction of the tree $T_{\Delta, k, n}$, which has maximum degree $\Delta$ and $k$-coalition number equal to $n$.}
    \label{fig:Tree Tight}
\end{figure}

\begin{proof}
Fix $k \leq \Delta$ and let $T$ be a tree. Since $\delta(T) = 1$, \Cref{Theorem: k > min deg CO_k lower bound} implies that $\COk(T) \leq \Delta - k +3$. 
We now construct, for each $n \in [2, \Delta - k +3]$, a tree $T_{\Delta, k,n}$ with $\Delta(T_{\Delta, k,n})= \Delta$ and $\COk(T_{\Delta, k,n}) = n$.

If $n = 2$, then let $T_{\Delta, k,n}$ be the star $S_{\Delta}$.
For $n \geq 3$, we construct $T_{\Delta, k,n}$ as follows.
Let $r$ be the root and $\{v_1, \dots, v_{n + k -3}\}$ be the neighbors of $r$. For each $i \in \{1, \dots, n-2\}$, attach $\Delta-1$ leaves to $v_i$, and let $\{v_{n-1}, \dots, v_{n+k-3}\}$ remain as leaves. See \Cref{fig:Tree Tight} for the construction.
The vertices $v_1, \dots, v_{n-2}$ have degree $\Delta$, and at least one such vertex exists since $n \geq 3$. Moreover,
$$\deg(r) = n + k -3 \leq (\Delta -k +3) + k -3 \leq \Delta.$$
Thus, $\Delta(T_{\Delta, k, n}) = \Delta$.
Let $L$ be the set of leaves and 
\begin{equation*}
    \copart =  \left\{  L, \{v_1\}, \dots, \{v_{n-2}\}, \{r\} \right\}.
\end{equation*}
When $k \leq  \Delta - 1$, every non-root vertex is $k$-dominated by $L$ and $r$ is $k-1$ dominated by $L$, so each singleton set forms a $k$-coalition with $L$. If $k=\Delta$, then $n$ is at most $3$, and $\copart = \{ \{r\}, \{v_1\}, L\}$. In either case, $\copart$ is a $k$-coalition partition. 

Suppose, for contradiction, there is a $k$-coalition partition $\copart'$ with more than $n$ sets. If $\copart'$ has a set containing $L$, then, even if every other set is a singleton set, $\copart'$ has at most $n$ sets.
Therefore, $\copart'$ does not have a set containing $L$. Since $k \geq 2$, every $k$-dominating set of $T_{\Delta, k,n}$ must contain $L$. Since every $k$-coalition pair must jointly contain $L$, we conclude $|\copart'| = 2 \leq n$. 
When $n = \Delta - k  +3$, the tree $T_{\Delta, k,n}$ satisfies $\COk(T_{\Delta, k,n}) = \Delta - k + 3$. Thus, the bound in \Cref{Theorem: k > min deg CO_k lower bound} is tight for all values of $\Delta(G)$ and $k\geq 2$.
\end{proof}

The $n$-dimensional hypercube $Q_n$ has vertex set $V(Q_n)=  \{(e_1, \dots, e_n)\mid e_i \in \ints_2 \text{ for all } i \}$. Two vertices $(e_1, \dots, e_n)$ and $(e_1', \dots, e_n')$ in $V(Q_n)$ are adjacent if and only if they differ in exactly one coordinate. For example, $Q_2 = C_4$.

\begin{theorem}
    Let $n\geq 2$ and let $Q_n$ be the $n$-dimensional hypercube. Then $CO_n(Q_n) = 4$.
\end{theorem}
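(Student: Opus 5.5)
Since $Q_n$ is $n$-regular and $|V(Q_n)| = 2^n \geq 4 \geq 3$, \Cref{Theorem: Regular Coalition Bound} with $k=n$ gives $3 \leq CO_n(Q_n) \leq 4$. So it suffices to exhibit an $n$-coalition partition of $Q_n$ with four sets.

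First I will pin down what $n$-domination means here. Every vertex has exactly $n$ neighbors, so a set $D$ is $n$-dominating if and only if every vertex outside $D$ has all of its neighbors in $D$, i.e.\ $V \setminus D$ is an independent set. Thus two disjoint sets $X_1, X_2$ form an $n$-coalition exactly when neither complement is independent but the complement of $X_1 \cup X_2$ is independent.

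For the construction, let $A$ be the set of vertices with even coordinate sum and $B$ the set with odd coordinate sum. These are the two color classes of the bipartite graph $Q_n$, so both are independent. For $n \geq 2$ each class has $2^{n-1} \geq 2$ vertices, so I split $A = A_1 \cup A_2$ and $B = B_1 \cup B_2$ into nonempty parts and take $\copart = \{A_1, A_2, B_1, B_2\}$.

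Then I verify the coalitions.
\begin{itemize}
\item The set $A_1 \cup A_2 = A$ has complement $B$, which is independent, so $A_1 \cup A_2$ is $n$-dominating. Likewise $B_1 \cup B_2$ is $n$-dominating.
\item No single part is $n$-dominating. The complement of $A_1$ contains $A_2 \cup B$. Any $a \in A_2$ has neighbors in $B$, so this complement contains an edge and is not independent. The same argument applies to $A_2$, $B_1$ and $B_2$.
\end{itemize}
Hence $A_1$ and $A_2$ form an $n$-coalition, $B_1$ and $B_2$ form an $n$-coalition, and every set in $\copart$ has a partner. So $\copart$ is an $n$-coalition partition with four sets, and $CO_n(Q_n) = 4$.

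The only step needing care is the equivalence between $n$-domination and independence of the complement. The remaining checks are routine; even the case $n=2$, where $Q_2 = C_4$ and all four parts are singletons, works directly.
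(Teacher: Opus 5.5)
Your proof is correct. The upper bound comes from \Cref{Theorem: Regular Coalition Bound}, exactly as in the paper. The difference lies in how the four-set partition is produced and checked.

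\textbf{The paper's route.} The paper inducts on $n$. It starts from the partition of $Q_2$ into four singletons and doubles a partition of $Q_n$ along the new coordinate. It then checks $(n+1)$-domination by counting neighbors in the two layers.

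\textbf{Your route.} You describe the partition in one step. You verify it using the fact that in an $n$-regular graph a set is $n$-dominating exactly when its complement is independent.

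\textbf{Same partition.} If you unwind the paper's induction, the union of the first coalition pair is always the even-weight class and the union of the second is the odd-weight class. Each is split according to the last coordinate, so the paper's partition is a special case of yours. Your characterization also shows this is forced for any partition made of two disjoint coalition pairs: the two unions are then complementary independent sets, hence the two color classes of the connected graph $Q_n$.

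\textbf{What each buys.} Your approach is shorter and non-inductive, with no bookkeeping of part sizes. It also justifies explicitly that no single part is $n$-dominating, which the paper asserts without much detail. It generalizes immediately: the same reasoning, with the same cited upper bound, gives $\COk(G)=4$ for every $k$-regular bipartite graph $G$ with $k\geq 2$. The paper's induction, in exchange, gives explicit coordinates for each part.
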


\begin{proof}
    Since hypercube $Q_n$ is $n$-regular, it follows from \Cref{Theorem: Regular Coalition Bound} that $3 \leq CO_n(Q_n)\leq 4$. Proceeding by induction, for each $n$ we will exhibit an $n$-coalition partition with $4$ sets. Observe that the following is a $2$-coalition partition of $Q_2$: $$\left\{ \{(1,1)\},  \{(1,0)\},  \{(0,1)\},  \{(0,0)\} \right\},$$ where $ \{(1,1)\}$ and $ \{(0,0)\}$ form one $2$-coalition and $ \{(1,0)\}$ and $ \{(0,1)\}$ form another. See \Cref{fig: 3D Hypercube Example} for a $3$-coalition partition of $Q_3$.
    
   Suppose that we have an $n$-coalition partition $\copart_n = \{X_1, X_2, X_3, X_4\}$ of $Q_n$, where $X_1$ and $X_2$ form one $n$-coalition pair and $X_3$ and $X_4$ form another, such that $|X_i|= 2^{n-2}$ for all $1 \leq i \leq 4$. Then we inductively construct an $(n+1)$-coalition partition of $Q_{n+1}$. Set
    \begin{align*}
        Y_1 = \{ (e_1, \dots, e_n, 0) \in V(Q_{n+1}) \mid (e_1, \dots, e_n) \in X_1 \cup X_2\}; \\
        Y_2 = \{ (e_1, \dots, e_n, 1) \in V(Q_{n+1}) \mid (e_1, \dots, e_n) \in X_3 \cup X_4\}; \\
        Y_3 = \{ (e_1, \dots, e_n, 0) \in V(Q_{n+1}) \mid (e_1, \dots, e_n) \in X_3 \cup X_4\}; \\
        Y_4 = \{ (e_1, \dots, e_n, 1) \in V(Q_{n+1}) \mid (e_1, \dots, e_n) \in X_1 \cup X_2\}.
    \end{align*}
 We check that $\copart_{n+1} = \{Y_1, Y_2, Y_3, Y_4\}$ forms an $(n+1)$-coalition partition of $Q_{n+1}$ that satisfies the inductive hypothesis. Observe that $Y_1$ and $Y_2$ each $n$-dominate the subsets $(\ints_2)^n \times \{0\}$ and $(\ints_2)^n \times \{1\}$, respectively, and yet neither is an $(n+1)$-dominating set of $Q_{n+1}$. Let $v= (e_1, \dots, e_n, e_{n+1}) \in V(Q_{n+1}) \setminus (Y_1 \cup Y_2)$. Suppose $e_{n+1} = 0$. Then $(e_1, \dots, e_n, e_{n+1}) \in X_3 \cup X_4$. Hence, $v$ has $n$ neighbors in $Y_1$ and one neighbor in $Y_2$, so $v$ is $(n+1)$-dominated by $Y_1 \cup Y_2$. Analogous reasoning holds when $e_{n+1}=1$. Therefore, $Y_1$ and $Y_2$ form an $(n+1)$-coalition, and similarly $Y_3$ and $Y_4$ form an $(n+1)$-coalition. Thus, $\{Y_1, Y_2, Y_3, Y_4\}$ is an $(n+1)$-coalition partition, as required.
\end{proof}

\begin{figure} [htb]
    \centering
    \begin{tikzpicture}[
  every node/.style={circle, draw, fill=white, inner sep=1.5pt}, 
  scale=1.2
]

\node (A1) at (0, 0) {};
\node (A2) at (0,1) {};
\node (A3) at (1,1) {};
\node (A4) at (1,0) {};

\draw (A1) -- (A2) -- (A3) -- (A4) -- (A1);


\node (B1) at (-0.5, -0.5) {};
\node (B2) at (-0.5,1.5) {};
\node (B3) at (1.5,1.5) {};
\node (B4) at (1.5,-0.5) {};

\draw (A1) -- (B1) ;
\draw (A2) -- (B2) ;
\draw (A3) -- (B3) ;
\draw (A4) -- (B4) ;

\draw (B1) -- (B2) -- (B3) -- (B4) -- (B1);

\begin{scope} [on background layer]
    \node[ellipse, draw=Cerulean, thick, fill=  Cerulean!30, minimum width=0.6cm, minimum height=0.6cm, fit=(A1)] {};
    \node[ellipse, draw=Cerulean, thick, fill=  Cerulean!30, minimum width=0.6cm, minimum height=0.6cm, fit=(A3)] {};
    \node[ellipse, draw=OliveGreen,  fill=LimeGreen!30, minimum width=0.6cm, minimum height=0.6cm, fit=(A2)] {};
    \node[ellipse, draw=OliveGreen,  fill=LimeGreen!30, minimum width=0.6cm, minimum height=0.6cm, fit=(A4)] {};
      \node[ellipse, draw=CarnationPink, fill=CarnationPink!30, minimum width=0.6cm, minimum height=0.6cm, fit=(B1)] {};
    \node[ellipse, draw=CarnationPink, fill=CarnationPink!30, minimum width=0.6cm, minimum height=0.6cm, fit=(B3)] {};
    
    \node[ellipse, draw=Goldenrod, fill=Goldenrod!30, minimum width=0.6cm, minimum height=0.6cm, fit=(B2)] {};
    \node[ellipse, draw=Goldenrod, fill=Goldenrod!30, minimum width=0.6cm, minimum height=0.6cm, fit=(B4)] {};
\end{scope}

\end{tikzpicture}
\caption{A $3$-coalition partition of $Q_3$ with four sets: blue, green, pink, and yellow.}
    \label{fig: 3D Hypercube Example}
\end{figure}

Jafari, Alikhani, and Bakhshesh in \cite{JAB25} provide lower and upper bounds for the $k$-coalition number of complete bipartite graphs.  Bre\v{s}ar, Klav\v{z}ar, and Samadi \cite{BHS25} computed the $k$-coalition number exactly for complete bipartite graphs.

\begin{theorem}[{\cite[Theorem~5.2] {BHS25}}] \label{Theorem: Complete bipartite graphs} Let $s_2 \geq s_1$ and $k \geq 2$. Then,
    \begin{align*}
\COk(K_{s_1,s_2}) =\begin{cases}
    s_2-k+3 & \text{if $s_1 \leq 3k-2 $}, \\
    s_1 +s_2 - 4k + 4 & \text{if $s_1 \geq 3k-1 $}.
\end{cases}
\end{align*}
\end{theorem}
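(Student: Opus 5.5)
Since this result is quoted from \cite{BHS25}, we only sketch how we would reprove it. We bound from above and construct partitions from below, separately in the two regimes. Write $A=S_1$ and $B=S_2$ with $|A|=s_1\le s_2=|B|$. The basic fact is that a set $D$ is $k$-dominating in $K_{s_1,s_2}$ if and only if either $A\subseteq D$ and $B\subseteq D$ (trivially), or for each side not contained in $D$, the opposite side contributes at least $k$ vertices to $D$. Concretely, $D$ is $k$-dominating if and only if
\begin{itemize}
\item[(a)] $A\subseteq D$ or $|D\cap B|\ge k$, and
\item[(b)] $B\subseteq D$ or $|D\cap A|\ge k$.
\end{itemize}

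\emph{Lower bounds.}
\begin{itemize}
\item When $s_1\le 3k-2$, we aim for $s_2-k+3$ sets: take $A$ as one set, $k-1$ vertices of $B$ as another set, and the remaining $s_2-k+1$ vertices of $B$ as singletons. Each singleton together with the $(k-1)$-set gives $k$ vertices of $B$, and $A$ with $B$ almost covered should close coalitions. We would adjust this so that every set has a partner; in particular the set $A$ pairs with the $(k-1)$-set plus singletons as needed, and we would check small cases.
\item When $s_1\ge 3k-1$, we aim for $s_1+s_2-4k+4$ sets. Use two sets $P_A\subseteq A$ and $P_B\subseteq B$ of size $2k-2$ each, split as $P_A=P_A'\cup P_A''$ and $P_B=P_B'\cup P_B''$ with $|P_A'|=|P_A''|=|P_B'|=|P_B''|=k-1$, and group them into two sets $(P_A'\cup P_B')$ and $(P_A''\cup P_B'')$. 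Make every remaining vertex a singleton. A singleton $a\in A$ joined with a set $P_A'\cup P_B'$ contains $k$ vertices of $A$ and $k-1$ vertices of $B$, which is not enough, so the grouping must be tuned. Our plan is to take sets with $k-1$ vertices on each side so that adding one vertex on either side gives $k$ on that side, and then arrange that the union has $\ge k$ on both sides. The count $s_1+s_2-4k+4$ suggests exactly $4k-4$ vertices in $2$ sets with $2k-2$ vertices each. Pairing two such sets gives $2k-2\ge k$ on both sides, and the sets themselves have only $k-1<k$ on each side, so they are not $k$-dominating. This requires $s_1-2k+2\ge$ some amount, namely $s_1\ge 3k-1$ so that the singletons still leave the big sets non-dominating. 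The singletons pair with \dots{} this needs care.
\end{itemize}

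\emph{Upper bounds.} By \Cref{Observation: No max size partition has k-dom set}, every set has a partner. Any coalition pair $X\cup Y$ satisfies (a) and (b). If neither $X$ nor $Y$ meets a side heavily, each set has fewer than $k$ vertices per side, and a union of two such sets has at most $2k-2$ vertices per side. We then do a counting argument: the sets that are "small" on both sides must pair with sets supplying $k$ on each side. We would show that at most a bounded number of vertices ($4k-4$, or $k-1$ of $B$ plus all of $A$ in the first regime) can lie in non-singleton sets in any optimal partition.

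The main obstacle is this upper-bound case analysis, which requires splitting according to whether some set contains all of $A$ (the regime $s_1\le 3k-2$, where the bound $s_2-k+3$ follows as in \Cref{Theorem: k > min deg CO_k lower bound}-type reasoning applied to the $B$ side) or not. In the latter case, each singleton's partner must contain $\ge k-1$ vertices on its side and $\ge k$ on the other, and we would bound how many disjoint such partners exist.
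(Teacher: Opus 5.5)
Both of your constructions fail, and the upper bound is not yet an argument.

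\textbf{The lower-bound constructions.}

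\emph{Regime $s_1\le 3k-2$.} The set $A$ is itself $k$-dominating as soon as $s_1\ge k$: it contains all of $A$, and each $b\in B$ has all $s_1$ of its neighbors in $A$. So $A$ cannot be a coalition member, and it can stand alone only if $s_1=k$. Moreover, a union of subsets of $B$ that misses some $b\in B$ gives $b$ no neighbors at all. Since $s_2>k$, your $(k-1)$-subset of $B$ and the $B$-singletons therefore have no partner. What works is \Cref{Lemma: R-partite CO_k lower bound} with $r=2$: let $X_1$ consist of $k$ vertices of $A$ and $k-1$ vertices of $B$, let $X_2$ be the other $s_1-k$ vertices of $A$, and make the remaining $s_2-k+1$ vertices of $B$ singletons. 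Then $X_1\cup X_2\supseteq A$, and $X_1\cup\{b\}$ has $k$ vertices on each side. This needs $X_2\neq\emptyset$, i.e.\ $k<s_1$. You should state this hypothesis explicitly, as in \Cref{Theorem: Multipartite k-coalition number}; without it the formula fails, e.g.\ $\mathrm{CO}_2(K_{2,10})=4$, not $11$.

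\emph{Regime $s_1\ge 3k-1$.} With two sets having $k-1$ vertices on each side, a singleton joined with either of them has only $k-1$ vertices on the opposite side, so no singleton has a partner. Your count is also off by two: $4k-4$ vertices in two sets plus singletons would give $s_1+s_2-4k+6$ sets, more than the value you are trying to prove. The working sets are asymmetric:
\begin{itemize}
\item $Y_1$, with $k$ vertices of $A$ and $k-1$ of $B$, is a partner for every $B$-singleton;
\item $Y_2$, with $k-1$ vertices of $A$ and $k$ of $B$, is a partner for every $A$-singleton.
\end{itemize}
Together they use $4k-2$ vertices. This is valid whenever $s_1\ge 2k$. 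The threshold $s_1\ge 3k-1$ is not a feasibility condition; it is precisely the inequality $s_1+s_2-4k+4\ge s_2-k+3$.

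\textbf{The upper bound.} Let $\copart$ be a maximum $k$-coalition partition. The split you propose is vacuous. By \Cref{Observation: No max size partition has k-dom set}, $\copart$ contains no $k$-dominating set, and any set containing a whole side is $k$-dominating, so no set contains $A$. Also, \Cref{Theorem: k > min deg CO_k lower bound} is unavailable, since here $k<\delta=s_1$. Your remark about partners of singletons holds only when the coalition union contains no side, and on its own it bounds nothing. What you need is a \emph{lower} bound on $\sum_{X\in\copart}(|X|-1)$.

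A complete argument (essentially the $r=2$ case of the appendix) runs as follows.
\begin{itemize}
\item For $k<s_1$, a set is $k$-dominating iff it contains a side or has at least $k$ vertices on each side.
\item If some coalition pair has a union containing a side, \Cref{Lemma: Contains a partite set} gives $|\copart|\le s_2-k+3$.
\item Otherwise every coalition union has at least $2k$ vertices.
\item If $\copart$ has no singleton, then $s_1+s_2\ge 2k+2(|\copart|-2)$, whence $|\copart|\le s_2-k+3$. This case must be handled directly, because \Cref{Lemma: There exists a singleton in max size partitions} assumes $r\ge 3$.
\item Suppose $X_1$ is a singleton with partner $X_2$. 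If $X_2$ partners every other set, \Cref{Lemma: max number of coalition partners} gives $|\copart|\le 1+(s_2-k+2)$.
\item If instead some $X_3$ has a partner $X_t\neq X_2$, then $X_1,X_2,X_3,X_t$ cover at least $4k-1$ vertices (if $X_t=X_1$) or $4k$ vertices (otherwise). This gives $|\copart|\le s_1+s_2-4k+4$.
\end{itemize}
Hence $\COk(K_{s_1,s_2})\le\max\{s_2-k+3,\,s_1+s_2-4k+4\}$, which matches the two corrected constructions.
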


We extend this work by computing the $k$-coalition number for complete multipartite graphs with at least 3 partite sets. See \Cref{sect: complete multipartite graphs} for the proof.

\begin{theorem} \label{Theorem: Multipartite k-coalition number}
Fix $k\geq 2$. Let $K_{s_1,s_2, \dots, s_r}$ be a complete $r$-partite graph where $2\leq s_1\leq s_2 \leq \dots \leq s_r$. If $k < s_1$, then 
\begin{align*}
\COk(K_{s_1,\dots, s_r}) =\begin{cases}
    \sum_{i=1}^{r}s_i - 2k + 2 & \text{if $r \geq 2k $}, \\
    \sum_{i=1}^{r}s_i - 2k + 1 & \text{if $k < r < 2k$ and $s_1 \geq k + 2$}, \\
    \sum_{i=1}^{r}s_i - 2k - 2\big\lceil \frac{k}{r-1}\big\rceil+ 4 & \text{if $r \leq k$ and $\big(k + \big\lceil \tfrac{k}{r-1}\big\rceil -1\big) \bmod{r} \leq \tfrac{r}{2}$}  \\ 
    &\text{and $s_1 \geq k + 2\big\lceil \tfrac{k}{r-1}\big\rceil-1$}, \\
    \sum_{i=1}^{r}s_i - 2k - 2\big\lceil \frac{k}{r-1}\big\rceil+ 3 & \text{if $r \leq k$ and $\big(k + \big\lceil \frac{k}{r-1}\big\rceil -1 \big) \bmod{r} > \frac{r}{2}$} \\ &\text{and $s_1 \geq k + 2\big\lceil \frac{k}{r-1}\big\rceil$},  \\
    \sum_{i=2}^{r}s_i - k + 3 & \text{otherwise}. 
\end{cases}
\end{align*}
\end{theorem}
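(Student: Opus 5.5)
The plan rests on an explicit description of $k$-domination in $G=K_{s_1,\dots,s_r}$. Write $n=\sum_i s_i$. A set $D$ is $k$-dominating if and only if, for every part $S_i \not\subseteq D$, we have $|D\setminus S_i|\ge k$. Since $k<s_1$, a set of size $O(k)$ never contains a whole part, so this criterion is what every set in the partition must be tested against. Define the \emph{profile} of a set $X$ by $x_i=|X\cap S_i|$. Then $X$ is $k$-dominating exactly when $|X|-x_i\ge k$ for all $i$, and $X\cup Y$ is $k$-dominating exactly when $|X|+|Y|-x_i-y_i\ge k$ for all $i$. Everything reduces to integer bookkeeping on profiles. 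By \Cref{Observation: No max size partition has k-dom set}, I may assume that every set in a maximum partition has a $k$-coalition partner. I measure a partition $\copart$ by its \emph{loss} $n-|\copart|=\sum_{X\in\copart}(|X|-1)$, and prove each case of the formula as a lower bound on the loss.

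\textbf{Step 1 (structural lemmas).} First, two singletons never form a $k$-coalition when $k\ge 2$. Second, if a singleton $\{v\}$ with $v\in S_j$ partners $Y$, then $|Y|-y_i\ge k-1$ for all $i$, and also $|Y|-y_j\ge k$. So $Y$ is ``one short'' only on parts it meets, and $v$ must lie in a part where $Y$ is not short. Hence the vertices of the parts where $Y$ is short, and the vertices of $Y$ itself, need a different large partner. From this I aim to show that a maximum partition contains at least two non-singleton sets $A$ and $B$ with $|A\setminus S_i|\ge k-1$ and $|B\setminus S_i|\ge k-1$ for the relevant $i$. The general bound $|X|-x_i\ge k-1$ forces $|X|\ge k-1+\max_i x_i$. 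When there are only $r\le k$ parts, some part must carry about $\lceil k/(r-1)\rceil$ vertices of $X$ (pigeonhole over the $r-1$ other parts), giving $|X|\ge k+\lceil k/(r-1)\rceil-1$. This is where the ceiling term in the statement comes from.

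\textbf{Step 2 (constructions for lower bounds).} For $r\ge 2k$: take $A$ and $B$ of size $k$, each meeting $k$ distinct parts, with the two families of parts disjoint. Make all other vertices singletons. Every singleton lies outside the parts of $A$ or outside the parts of $B$, so it partners one of them, and $A\cup B$ is $k$-dominating. This yields $n-2k+2$ sets. For $k<r<2k$: the parts of $A$ and $B$ must overlap, and I expect one extra vertex is needed (hence $s_1\ge k+2$ and the bound $n-2k+1$). For $r\le k$: take $A$ and $B$ of size $k+\lceil k/(r-1)\rceil-1$, balanced as evenly as possible across parts. The parity condition $(k+\lceil k/(r-1)\rceil-1)\bmod r\le r/2$ decides whether the ``heavy'' parts of $A$ and $B$ can be chosen disjoint; if they cannot, one more vertex is lost. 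The ``otherwise'' value $n-s_1-k+3=\Delta(G)-k+3$ comes from a different construction: put $S_1$, plus a few vertices, into one big set $Y$ that partners all singletons, so that singletons outside $S_1$ suffice.

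\textbf{Step 3 (matching upper bounds).} The ``otherwise'' bound $n-s_1-k+3$ should follow from \Cref{Lemma: max number of coalition partners}, provided every set has a common partner, or by a direct argument otherwise. In the remaining cases I show loss $\ge 2k-2$, $2k-1$, and so on. The argument: by Step 1 there are two non-singleton sets, each of size at least the minimum forced by its profile. Any configuration with only one large set $Y$ forces $S_1$-type absorption, which gives the ``otherwise'' bound instead. The answer is then the maximum of the two bounds, and the case conditions on $s_1$ are exactly where the two-large-set construction wins.

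The main obstacle is the case $r\le k$. There I must prove that $A$ and $B$ cannot both attain size $k+\lceil k/(r-1)\rceil-1$ while jointly satisfying $|A|+|B|-a_i-b_i\ge k$ and still serving every singleton, unless the mod condition holds. My first attempt is to fix the residue $m=(k+\lceil k/(r-1)\rceil-1)\bmod r$. I then count how many parts receive the larger share in $A$ and in $B$, and show that the two sets of parts where $A$ and $B$ are short must be disjoint (otherwise some singleton has no partner). That disjointness is possible iff $2m\le r$, or the complementary count works out.
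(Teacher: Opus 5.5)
Your construction for $r\ge 2k$ and the heavy-part heuristic for $r\le k$ are the right ideas, but the upper-bound half is asserted rather than proved. Step 1 only constrains sets that partner a singleton. Step 3 then rests on two unproved claims: that a maximum partition has two such large sets, and that ``any configuration with only one large set forces $S_1$-type absorption.'' The first claim is false in general. Whenever $s_1=k+1$, the stated value is $\sum_{i\ge 2}s_i-k+3$, and it is attained by a partition with exactly one non-singleton set (the corrected construction below).

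What is missing is a reduction that handles arbitrary coalition pairs. One that works is the following. A maximum partition contains a singleton $X_1$ (for $r\ge 3$ this follows by comparing with the $\sum_{i\ge 2}s_i-k+3$ construction); let $X_2$ be a partner of $X_1$. If $X_2$ partners every other set, \Cref{Lemma: max number of coalition partners} gives $|\copart|\le \Delta(G)-k+3=\sum_{i\ge 2}s_i-k+3$. Otherwise some $X_3$ not partnering $X_2$ has a partner $X_t\ne X_2$, and the loss is at least $|X_2\cup X_3\cup X_t|-3$. If $X_1\cup X_2$ or $X_3\cup X_t$ contains a whole part, a short count again gives the ``otherwise'' bound. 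If not, use the pigeonhole fact that a $k$-dominating set meeting $m$ parts and containing none has at least $k+\lceil k/(m-1)\rceil$ vertices; this bounds both unions from below. Applying that fact to the \emph{union} of a coalition pair is exactly what your hub analysis lacks when $X_3$ and $X_t$ are both non-singletons.

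On top of this, the cases $k<r<2k$ and $r\le k$ with residue $>r/2$ need one extra unit of loss. You must show that in the extremal configuration (both unions of minimum size, all other sets singletons) some singleton has no partner. This includes the subcase where $X_3$ and $X_t$ are each too small to partner any singleton, so every singleton must use $X_2$, which is short on some part. Your disjoint-short-parts argument covers only the subcase where $X_3$ is a minimum hub and $X_t$ a singleton. For $k<r<2k$ you give neither this argument nor a construction. A suitable construction is one set meeting $S_1,\dots,S_k$ once each, another meeting $S_2,\dots,S_k$ once each and $S_{k+1}$ twice, and everything else singletons.

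Separately, the ``otherwise'' construction fails as described. Since $s_1>k$ and every vertex outside $S_1$ is adjacent to all of $S_1$, any set containing $S_1$ is itself a $k$-dominating set with more than $k$ vertices. Such a set cannot appear in a $k$-coalition partition. You must split $S_1$ instead:
\begin{enumerate}
\item take $k$ vertices of $S_1$ together with $k-1$ vertices of $S_2$; this set is short only on $S_1$, so it partners every singleton outside $S_1$;
\item let the remaining $s_1-k$ vertices of $S_1$ form its partner;
\item make everything else a singleton.
\end{enumerate}
This gives $n-(s_1+k-1)+2=\sum_{i\ge 2}s_i-k+3$ sets. A single absorbing set would yield one fewer even if it were legal.
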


\section{Graph Operations} \label{sect: graph operations}
Jafari, Alikhani, and Bakhshesh \cite{JAB25} propose investigating the behavior of the $k$-coalition number under graph operations. In this section, we show that the $k$-coalition number is superadditive under the graph join and (strictly) subadditive under the disjoint union. When the $k$-coalition partitions of $G$ and $H$ are structured ``nicely,'' we can compute exactly the $k$-coalition number of $G \sqcup H$ in terms of $\COk(G)$ and $\COk(H)$.

We first show that the $k$-coalition number is superadditive under the graph join. 
\begin{proposition} \label{Prop: Join superadditive}
    Let $k\geq 1$ and let $G$ and $H$ be graphs each having at least $k$ vertices. Then $$\COk(G + H) \geq \COk(G) + \COk(H).$$ 
\end{proposition}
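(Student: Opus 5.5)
The plan is to take a maximum $k$-coalition partition of each graph and show that their union is a $k$-coalition partition of $G+H$. Let $\copart_G$ be a $k$-coalition partition of $G$ with $|\copart_G| = \COk(G)$, and let $\copart_H$ be one of $H$ with $|\copart_H| = \COk(H)$. Set $\copart = \copart_G \cup \copart_H$. This is a partition of $V(G+H)$ into $\COk(G)+\COk(H)$ sets. It suffices to check that every set of $\copart$ still satisfies the defining condition in $G+H$. By symmetry I only treat sets of $\copart_G$.

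\emph{Sets that form a $k$-coalition in $G$.} Let $X_1, X_2 \in \copart_G$ form a $k$-coalition in $G$.
\begin{itemize}
\item \emph{Neither part is $k$-dominating in $G+H$.} Since $X_1$ is not $k$-dominating in $G$, some vertex $v \in V(G)\setminus X_1$ has fewer than $k$ neighbors in $X_1$. In $G+H$ the only new neighbors of $v$ lie in $V(H)$, which is disjoint from $X_1$. So $v$ still witnesses that $X_1$ is not $k$-dominating, and the same holds for $X_2$.
\item \emph{The union is $k$-dominating in $G+H$.} Every vertex of $V(G)\setminus(X_1\cup X_2)$ is $k$-dominated already inside $G$. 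Every vertex of $V(H)$ is adjacent to all of $X_1 \cup X_2$, so it suffices that $|X_1\cup X_2| \ge k$. This size check is the one step that needs care. If $X_1\cup X_2 \ne V(G)$, some vertex outside the union has at least $k$ neighbors in it, so $|X_1 \cup X_2| \geq k$. If $X_1 \cup X_2 = V(G)$, the hypothesis $|V(G)|\ge k$ gives the bound.
\end{itemize}
Hence $X_1$ and $X_2$ form a $k$-coalition in $G+H$.

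\emph{Sets that are $k$-dominating sets of size $k$ in $G$.} For $k\ge 2$ this case cannot occur, by \Cref{Observation: No max size partition has k-dom set}. For $k=1$, or in general, it is handled directly. Such a set $X$ has exactly $k$ vertices and $k$-dominates $V(G)\setminus X$ within $G$. Each vertex of $H$ is adjacent to all $k$ vertices of $X$. So $X$ remains a $k$-dominating set of $G+H$ with exactly $k$ vertices, which is allowed in a $k$-coalition partition.

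Thus $\copart$ is a $k$-coalition partition of $G+H$, and so $\COk(G+H)\ge \COk(G)+\COk(H)$.
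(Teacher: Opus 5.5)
Your proof is correct and follows the same route as the paper. The paper also takes the union of maximum partitions of $G$ and $H$ and checks that each $k$-coalition pair from one graph remains a $k$-coalition pair in $G+H$, because every vertex of the other graph is adjacent to all of $X_1\cup X_2$, which has at least $k$ vertices.

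You are more careful than the paper at two points:
\begin{itemize}
\item You justify $|X_1\cup X_2|\ge k$ via the hypothesis $|V(G)|\ge k$ when $X_1\cup X_2=V(G)$; the paper infers it only from $X_1\cup X_2$ being $k$-dominating.
\item You handle $k$-dominating sets of size $k$ directly. The paper instead invokes \Cref{Observation: No max size partition has k-dom set}, which only covers $k\ge 2$, even though the statement allows $k=1$.
\end{itemize}
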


\begin{proof}
    Fix maximum–cardinality $k$-coalition partitions of $G$ and $H$, say $\copart_G = \{X_1, \dots, X_{\COk(G)}\}$ and $\copart_H = \{ Y_1, \dots, Y_{\COk(H)}\}$, respectively. We will show that $\copart_G \cup \copart_H$ is a $k$-coalition partition of $G + H$. By \Cref{Observation: No max size partition has k-dom set}, there are no $k$-dominating sets in  $\copart_G$ or $\copart_H$.
    Suppose $X_1, X_2 \in \copart_G$ are coalition partners. Since neither $X_1$ nor $X_2$ can $k$-dominate $V(G)$, neither set $k$-dominates $V(G + H)$. Since the union $X_1 \cup X_2$ $k$-dominates $V(G)$, it follows that $|X_1 \cup X_2| \geq k$. Since every vertex in $V(H)$ is adjacent to every vertex in $X_1 \cup X_2$, the union $k$-dominates $V(H)$ and therefore forms a $k$-coalition in $G + H$. By analogous reasoning, every $k$-coalition pair in $\copart_H$ is also a $k$-coalition pair in $G + H$. Thus, $\copart_{G} \cup \copart_{H}$ is a $k$-coalition partition of $G+H$ and $\COk(G + H) \geq |\copart_{G} \cup \copart_{H}| = \COk(G) + \COk(H)$.
\end{proof}

In fact, when we repeatedly take the join of a graph $G$ with itself, the $k$-coalition number increases linearly with the number of vertices in $G$. We begin by proving a more general statement.

\begin{theorem} \label{Theorem: Join Limit} 
Fix $2 \leq k < n$.
   Given $m \geq 2$ (possibly distinct) $n$-vertex graphs $G_1, G_2, \dots, G_m$, we have $$\COk(G_1+ \dots+ G_m) \geq (m-1)n - k + 2.$$
\end{theorem}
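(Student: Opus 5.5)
The plan is to split on the minimum degrees of the graphs $G_i$. When every $G_i$ has large minimum degree, the earlier lower bound of Bre\v{s}ar, Henning and Samadi applies directly. Otherwise, we build an explicit $k$-coalition partition consisting of one large set together with singletons. Write $G = G_1 + \dots + G_m$; every vertex of $G_i$ is adjacent to all $(m-1)n$ vertices outside $V(G_i)$.

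\emph{Case 1: $\delta(G_i) \geq k$ for every $i$.} Each vertex of $G_i$ then has degree at least $(m-1)n + k$ in $G$, so $\delta(G) \geq (m-1)n + k \geq k$. By \Cref{Theorem: min degree lower bound on CO_k},
$$\COk(G) \geq \delta(G) - k + 3 \geq (m-1)n + 3 > (m-1)n - k + 2.$$

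\emph{Case 2: some $G_i$ has a vertex $z$ with $d := \deg_{G_i}(z) \leq k-1$.} Relabel so that $i = m$.
\begin{itemize}
\item Choose a set $A'$ of $k-1-d$ vertices of $G_1$. This is possible since $k-1-d \leq k-1 < n$.
\item Let $S = (V(G_m) \setminus \{z\}) \cup A'$.
\item Let $\copart$ consist of $S$ together with every vertex outside $S$ as a singleton (including $\{z\}$).
\end{itemize}
Then $|S| = n - 1 + k - 1 - d$, so
$$|\copart| = mn - |S| + 1 = (m-1)n - k + 3 + d \geq (m-1)n - k + 2.$$

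We next check that every set of $\copart$ has a $k$-coalition partner. All partners will be $S$.
\begin{itemize}
\item \emph{$S$ is not $k$-dominating.} The vertex $z \notin S$ has exactly $d + (k-1-d) = k-1$ neighbours in $S$: its $d$ neighbours inside $G_m$, and all of $A'$, since $A' \subseteq V(G_1)$ lies outside $G_m$.
\item \emph{A singleton is not $k$-dominating.} This holds because $mn \geq 2n > k+1$. Hence each singleton $\{v\}$ is not $k$-dominating.
\item \emph{$S \cup \{z\}$ is $k$-dominating.} This set contains $V(G_m)$, which has $n > k$ vertices. Every vertex outside $G_m$ is adjacent to all of $V(G_m)$.
\item \emph{$S \cup \{v\}$ is $k$-dominating for $v \neq z$, $v \notin S$.} Such a $v$ lies outside $G_m$, so $v$ is adjacent to $z$. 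Thus $z$ gains its $k$-th neighbour. Every other vertex $w \notin S \cup \{v\}$ also lies outside $G_m$ and is adjacent to all $n-1 \geq k$ vertices of $V(G_m) \setminus \{z\} \subseteq S$.
\end{itemize}
So $S$ forms a $k$-coalition with every singleton, and $\copart$ is a $k$-coalition partition. This gives $\COk(G) \geq (m-1)n - k + 2$.

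The step I expected to be the main obstacle is finding a partition that works for arbitrary, structurally unknown $G_i$. The difficulty is a vertex of $G_j$ that is non-adjacent to a singleton in the same $G_j$. Such a vertex cannot rely on that singleton for domination. The construction sidesteps this by making the only vertex that $S$ under-dominates be $z$, whose degree inside $G_m$ is small. Every vertex not in $S$ other than $z$ then lies outside $G_m$, so it is adjacent to $z$ and sees all of $V(G_m) \setminus \{z\}$. The high-minimum-degree case, where no such $z$ exists, is exactly the one handed to \Cref{Theorem: min degree lower bound on CO_k}.
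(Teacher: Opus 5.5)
Your argument is correct, and it takes a different route from the paper's.

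\textbf{The paper's route.} It splits on whether every $G_i$ is a clique. If so, it quotes $\COk(K_{mn}) = mn-k+2$. Otherwise it takes a non-edge $v_1v_2$ in $G_1$ and a $k$-set $X_1 \subseteq V(G_1)$ containing $v_1$ but not $v_2$. It pads $X_1$ with some $X_2 \subseteq V(G_2)$ until $X_1 \cup X_2$ is $(k-1)$-dominating but not $k$-dominating. It then pairs each $u \in V(G_1)\setminus X_1$ with a vertex $u' \in V(G_2)\setminus X_2$ and leaves the rest as singletons. This gives $mn-n-|X_2|+1 \ge (m-1)n-k+2$ sets.

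\textbf{Your route.} You split on whether some $G_i$ contains a vertex $z$ of internal degree $d \le k-1$. The high-degree case goes to the minimum-degree bound of Bre\v{s}ar, Henning and Samadi. In the other case you use a single large set $S$ whose only under-dominated vertex is $z$, so every singleton repairs it. All four of your verifications go through.

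\textbf{What each buys.} Your partition is simpler: its coalition graph is a star. You also get a stronger conclusion, since both of your cases give at least $(m-1)n-k+3$ sets. This improvement is attained: $K_{3,3} = \overline{K_3} + \overline{K_3}$ with $k=2$ has $CO_2(K_{3,3}) = 4$, whereas the stated bound gives only $3$. The paper's route avoids the external minimum-degree theorem and needs only the value of $\COk$ on complete graphs.

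One cosmetic remark: a singleton fails to be $k$-dominating simply because $k \ge 2$ and $G$ has another vertex. The inequality $mn > k+1$ is not the relevant reason, though your conclusion is right.
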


\begin{proof}
    First suppose that $G_1,\dots, G_m$ are all $n$-vertex cliques. Since the join of cliques is a clique, we have $G_1+ \dots+ G_m = K_{mn}$. By \Cref{Thm: CO_k(K_n)}, $\COk(G_1+ \dots+ G_m) = mn -k +2 \geq (m-1)n - k + 2$. 
    
    Now suppose that some $G_i$ is not a clique, which we may relabel to be $G_1$. Then there exist two vertices $v_1, v_2 \in V(G_1)$ that are nonadjacent. Pick a subset $X_1 \subseteq V(G_1)$ with $|X_1| = k$, $v_1\in X_1$, and $v_2 \notin X_1$. Then $X_1$ is not a $k$-dominating set of $G_1 + \dots + G_m$ (since $X_1$ does not $k$-dominate $v_2$), but $X_1$ does $k$-dominate $V(G_1 + \dots + G_m) \setminus V(G_1)$. Pick $X_2 \subseteq V(G_2)$ such that 
    $X_1 \cup X_2$ is a $(k-1)$-dominating set but not a $k$-dominating set of $G_1 + \dots + G_m$ (it may be that $X_2 = \emptyset$). 
    Note that $|X_2| \leq k -1$. 
    For each $u \in V(G_1) \setminus X_1$, pick a distinct $u' \in V(G_2) \setminus X_2$ (which we may do since $|V(G_2) \setminus X_2| \geq |V(G_1) \setminus X_1|$), and set $Y_u = \{u, u'\}$.  Note that $ \{u, u'\}$ forms a $k$-coalition with $X_1 \cup X_2$. 
  Every singleton $\{ v\} \nsubseteq X_1 \cup X_2 \cup \{ Y_u \mid u \in V(G_1) \setminus X_1\}$, forms a $k$-coalition with $X_1 \cup X_2$. Therefore, the following is a $k$-coalition partition of $G_1 + \dots + G_m$
    \begin{align*}
        \copart = \left\{ X_1 \cup X_2\right\} \cup \left\{ Y_u \mid u \in V(G_1) \setminus X_1 \right\}  \cup \left\{ \{v\} \mid v \notin ( X_1 \cup X_2 \cup \{ Y_u \mid u \in V(G_1) \setminus X_1 \}) \right\}
    \end{align*}
 and so
    \begin{equation*}
        \COk(G_1 + \dots + G_m) \geq |\copart| = |V(G_1+ \dots+ G_m)| - |V(G_1)| - |X_2|  +1 \geq  (m-1)n - k + 2. \qedhere
    \end{equation*}
\end{proof}

\begin{corollary} 
    Fix $k,n,m \in \N$ such that $2\leq k<n$ and $m \geq 2$. For any $n$-vertex graph $G$,
    $$\COk(mG) \geq (m-1)n-k+2,$$
    where $mG = G+ \dots + G$ denotes the join of $G$ with itself $m$ times. 
\end{corollary}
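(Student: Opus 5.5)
This corollary is the special case of \Cref{Theorem: Join Limit} in which all $m$ graphs are copies of the same $n$-vertex graph. The plan is to verify that its hypotheses hold and then read off the bound.

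Set $G_1 = G_2 = \dots = G_m = G$, and regard $mG$ as $G_1 + \dots + G_m$ on the vertex set $V(G_1) \sqcup \dots \sqcup V(G_m)$, with each copy keeping its internal edges and all edges added between distinct copies. This is exactly the iterated join of the definition. The join is associative up to isomorphism, since the edge set of an iterated join consists of the edges inside each copy together with all edges between different copies, whatever the bracketing. Hence $mG$ is well defined, and $\COk$, being an isomorphism invariant, takes the same value on it.

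The hypotheses of \Cref{Theorem: Join Limit} are $2 \le k < n$, $m \ge 2$, and that each $G_i$ has $n$ vertices. All three hold here by assumption. Applying the theorem gives $\COk(mG) \ge (m-1)n - k + 2$ directly.

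There is no real obstacle. The only point that needs care is that ``the join of $G$ with itself $m$ times'' means the join of $m$ vertex-disjoint copies of $G$, which is the setting of \Cref{Theorem: Join Limit}. That theorem already covers both the case where $G$ is a clique and the case where it is not, so no case analysis is needed here.
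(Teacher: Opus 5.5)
Your proposal is correct and matches the paper. The paper states this corollary without a separate proof, as the special case $G_1=\dots=G_m=G$ of \Cref{Theorem: Join Limit}, whose hypotheses $2\le k<n$, $m\ge 2$ and $|V(G_i)|=n$ you verify directly; your remark on the associativity of the join is a harmless extra detail.
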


It turns out to be much harder to prove that the $k$-coalition number is strictly subadditive under the disjoint union. To obtain an upper bound, we start with an arbitrary maximum-size $k$-coalition partition and then carefully move vertices between sets without decreasing the size of the partition. We require the following lemma. 



\begin{lemma} \label{Lemma: Converting dominating sets to coalitions}
    Fix $k \geq 2$. Given a non-empty collection $\mathcal{D} = \{D_1,\dots, D_n\}$ of disjoint $k$-dominating sets of a graph $G$, there is a collection of non-empty disjoint sets $\pi = \{X_1,\dots, X_m\}$ such that every $X_i\in \pi$ forms a $k$-coalition with some $X_j \in \pi$, $\bigcup_{i=1}^nD_i = \bigcup_{j=1}^m X_j$, and $|\pi| >|\mathcal{D}|$.
\end{lemma}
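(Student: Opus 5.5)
The plan is to treat each $D_i \in \mathcal{D}$ separately. I will split each $D_i$ into at least two nonempty pieces, each of which forms a $k$-coalition with another piece coming from the same $D_i$. Then I take $\pi$ to be the union of these splittings. The sets of $\pi$ are disjoint and nonempty, their union is $\bigcup_{i=1}^n D_i$, and $|\pi| \geq 2n > n = |\mathcal{D}|$.

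The basic fact I need is that singletons are never $k$-dominating. By the standing convention $G$ has at least two vertices. For any vertex $v$, a vertex $u \neq v$ has at most one neighbor in $\{v\}$, and $k \geq 2$, so $\{v\}$ does not $k$-dominate $G$. Consequently every $k$-dominating set $D_i$ has at least two vertices, and every single vertex of $D_i$ lies in a nonempty subset of $D_i$ that is not $k$-dominating.

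Now fix $D = D_i$ and a vertex $d \in D$. Among all subsets $A \subseteq D$ with $d \in A$ that are not $k$-dominating, I choose one that is maximal under inclusion. Such an $A$ exists because $\{d\}$ is a candidate, and the family is finite. Since $D$ itself is $k$-dominating, $A \neq D$, so $D \setminus A$ is nonempty. By maximality, for each $v \in D \setminus A$ the set $A \cup \{v\}$ is $k$-dominating.

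The two pieces $A$ and $\{v\}$ are disjoint, and neither is $k$-dominating: $A$ by choice, and $\{v\}$ by the singleton fact. Their union is $k$-dominating, so $A$ and $\{v\}$ form a $k$-coalition. Hence the collection $\{A\} \cup \{\{v\} : v \in D \setminus A\}$ partitions $D$ into at least two nonempty sets. In it, each singleton is a $k$-coalition partner of $A$, and $A$ is a partner of any of the singletons.

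Doing this for every $D_i$ gives the required $\pi$. Coalition partners are always found within the same $D_i$, so disjointness of the $D_i$ causes no interference. The only step needing care is guaranteeing that the maximal non-dominating set $A$ is nonempty, so that $D$ genuinely splits into two or more pieces. That is exactly where the singleton observation (using $k \geq 2$ and $|V(G)| \geq 2$) is used. Everything else is routine verification.
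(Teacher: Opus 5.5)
Your proof is correct, and it takes a different route from the paper's.

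The paper works top-down from \emph{minimal} $k$-dominating sets. It first shrinks $D_1,\dots,D_{n-1}$ to minimal $k$-dominating subsets and dumps the excess vertices into $D_n$. It then uses the fact that any partition of a minimal $k$-dominating set into two nonempty parts is a $k$-coalition pair. Next it repeatedly peels minimal $k$-dominating subsets off $D_n$. Finally it must dispose of the non-dominating remainder $D_n'$: either $D_n'$ is kept because it already has a partner, or it is merged into $X_{n,1}^1$. The merge requires checking that $X_{n,1}^1\cup D_n'$ is still not $k$-dominating and still partners $X_{n,2}^1$.

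You instead work bottom-up inside each $D_i$ separately. A maximal non-$k$-dominating subset $A_i\ni d$ of $D_i$ becomes $k$-dominating when any further vertex of $D_i$ is added. So $A_i$ is a common partner of every singleton $\{v\}$ with $v\in D_i\setminus A_i$, once you know singletons are never $k$-dominating. That fact uses $k\geq 2$ and $|V(G)|\geq 2$, essentially the same input the paper needs to split a minimal set into two nonempty parts.

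What your route buys:
\begin{itemize}
\item no reallocation of vertices between different $D_i$'s and no remainder case analysis;
\item every coalition pair lies inside a single $D_i$;
\item the explicit count $|\pi|=\sum_i\bigl(|D_i\setminus A_i|+1\bigr)\geq 2n$.
\end{itemize}
The paper's route gives fewer, larger pieces, which does not matter for the inequality $|\pi|>|\mathcal{D}|$ used in its applications.

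Your maximal-non-dominating-set idea is the same absorption step the paper uses later for the lower part of the interval when $k<\delta(G)$. There, vertices are moved into $X_1$ until $X_1\cup\{u\}$ is $k$-dominating for every $u\notin X_1$.
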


\begin{proof}
We may assume that $D_1, \dots, D_{n-1}$ are minimal $k$-dominating sets of $V(G)$. If not, then there exists some minimal $k$-dominating set $D_i' \subset D_i$. We replace $D_i$ with $D_i'$ and add all vertices of $D_i\setminus D_i'$ to $D_{n}$. Since $|D_i| \geq k \geq 2$, we may partition each minimal $k$-dominating set $D_i$ into two $k$-coalition partners $X_{i, 1}$ and $X_{i, 2}$. 
If $D_n$ is not minimal, then there exists a minimal $k$-dominating set $D_n^1 \subseteq D_n$, split it into $X_{n,1}^1$ and $X_{n,2}^1$, and replace $D_n$ by $D_n' = D_n\setminus D_n^1$. Repeat this step until $D_n'$ is no longer a $k$-dominating set. Suppose that this requires $N$ iterations. Let $$\pi = \left( \bigcup _{i=1}^{n} \{X_{i,1}, X_{i,2}\} \right) \cup \left( \bigcup_{j=1}^{N} \{X_{n,1}^j,X_{n,2}^j \}\right).$$ 
If the remainder of $D_n'$ does not form a coalition with any set in $\pi$, then replace $X_{n,1}^1$ with $X_{n,1}^1 \cup D_n'$. The union is not a $k$-dominating set, since this would indicate that $X_{n,1}^1$ and $D_n'$ would have been $k$-coalition partners. If $D_n'$ forms a coalition with some set in $\pi$, we add $D_n'$ to $\pi$.  Since every $k$-dominating set is divided into least $2$ sets in this process, we deduce $|\pi| > |\mathcal{D}|$. We may re-enumerate the sets to obtain that $\pi = \{X_1, \dots, X_m\}$.
\end{proof}

\begin{theorem} \label{Theorem: Disjoint Union subadditive}
    Let $G$ and $H$ be graphs with more than $k$ vertices and let $k \geq 2$. Then $$\COk(G \sqcup H) \leq \COk(G) + \COk(H) - 2.$$
\end{theorem}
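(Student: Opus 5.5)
The plan is to start from a maximum $k$-coalition partition $\copart$ of $G \sqcup H$ and project it onto each component. By \Cref{Observation: No max size partition has k-dom set}, every set of $\copart$ forms a $k$-coalition with some other set. The basic fact to use throughout is that a set $S$ is $k$-dominating in $G\sqcup H$ if and only if $S\cap V(G)$ $k$-dominates $G$ and $S\cap V(H)$ $k$-dominates $H$.

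Classify the sets of $\copart$ as \emph{$G$-pure}, \emph{$H$-pure}, or \emph{mixed}. Let $\pi_G=\{X\cap V(G) : X\in\copart,\ X\cap V(G)\neq\emptyset\}$, and define $\pi_H$ analogously. Then
\[
|\copart| = |\pi_G|+|\pi_H|-m,
\]
where $m$ is the number of mixed sets. The goal is to build $k$-coalition partitions of $G$ and $H$ of sizes $|\pi_G|+a$ and $|\pi_H|+b$ with $a+b+m\ge 2$.

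First I would analyse coalition pairs $X,Y$ in $\copart$. If $X$ is $G$-pure, then $Y\cap V(H)$ must $k$-dominate $H$. Also $X\cup (Y\cap V(G))$ must $k$-dominate $G$. In particular, if $Y$ is $H$-pure, then $X$ itself $k$-dominates $G$.

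\textbf{Case $m=0$.} Every set is pure, so every partner of a $G$-pure set is $H$-pure. Hence every member of $\pi_G$ is a $k$-dominating set of $G$, and likewise for $\pi_H$. Apply \Cref{Lemma: Converting dominating sets to coalitions} to $\pi_G$; it repartitions $V(G)$ into strictly more sets, each forming a $k$-coalition with another, so this is a $k$-coalition partition of $G$. Hence $\COk(G)\ge|\pi_G|+1$. Doing the same in $H$ gives $\COk(H)\ge|\pi_H|+1$, and summing yields the bound.

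\textbf{Case $m\ge 1$.} The restrictions in $\pi_G$ split into those that $k$-dominate $G$ (call this family $\mathcal D_G$) and those that do not ($\mathcal N_G$). A member $X\cap V(G)\in\mathcal N_G$ has a partner $Y$ with $(X\cup Y)\cap V(G)$ $k$-dominating $G$. If $Y\cap V(G)\in\mathcal N_G$, the two restrictions form a $k$-coalition in $G$, which is good. The bad case is when every such partner restriction lies in $\mathcal D_G$ or is empty.

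To repair this, apply \Cref{Lemma: Converting dominating sets to coalitions} to $\mathcal D_G$, obtaining more pieces, each with a coalition partner. Any $\mathcal N_G$-set left without a partner is then merged into one of the pieces, chosen so that the merged set stays non-$k$-dominating. This mirrors the final step in the proof of the lemma: if a merge produced a $k$-dominating set, the two parts would already have formed a coalition.

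I expect this bookkeeping to be the main obstacle: the merging must not lose more sets than the lemma gains. The precise target is to show the $G$-side loses nothing net, i.e.\ $\COk(G)\ge|\pi_G|$, with a gain of $+1$ whenever $\mathcal D_G\neq\emptyset$.

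Finally, combine the two sides. If $m\ge2$, the bound follows immediately from the displayed count. If $m=1$, let $Z$ be the unique mixed set. Any partner of $Z$ is pure, say $G$-pure. Then $Z\cap V(H)$ $k$-dominates $H$, so $\mathcal D_H\neq\emptyset$ and the $H$-side gains $1$, giving $a+b+m\ge 2$.
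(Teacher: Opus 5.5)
\textbf{There is a genuine gap in the bookkeeping for the case $m\ge 1$.} Your $m=0$ case is correct. The trouble is the per-side target that your $m\ge 1$ cases rely on, and it sits exactly at the obstacle you flagged.

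On the $G$-side, \Cref{Lemma: Converting dominating sets to coalitions} applied to $\mathcal D_G$ guarantees a gain of only $1$ (at least $|\mathcal D_G|$ from its proof). Each unpartnered set of $\mathcal N_G$ that you absorb costs one set, and nothing bounds the number of absorptions. In fact the target ``$\COk(G)\ge|\pi_G|$, with $+1$ when $\mathcal D_G\neq\emptyset$'' is false. Take $k=2$ and let $G$ be the spider with centre $r$, middle vertices $u_1,u_2,u_3$, leaves $L=\{\ell_1,\ell_2,\ell_3\}$, and edges $ru_i$, $u_i\ell_i$. Let $H=K_n$ on $h_0,\dots,h_{n-1}$ with $n\ge 4$. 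Then $\copart=\{L\cup\{r,h_0\},\{u_1,h_1\},\{u_2,h_2\},\{u_3,h_3\},\{h_4\},\dots,\{h_{n-1}\}\}$ is a $2$-coalition partition of $G\sqcup H$ with $n$ sets. It is maximum: every $2$-dominating set contains $L$, so all other sets can partner only with the set containing $L$, and a short case check gives $\COk(G\sqcup H)=n$. Here $\pi_G=\{L\cup\{r\},\{u_1\},\{u_2\},\{u_3\}\}$ has four sets and $\mathcal D_G\neq\emptyset$, yet $\COk(G)=3$. So your step ``if $m\ge 2$ the bound follows immediately'' is unjustified, since it needs $\COk(G)\ge|\pi_G|$ and $\COk(H)\ge|\pi_H|$. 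The theorem holds in this example only because $m=4$ pays for the deficit.

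\textbf{The missing idea is to do the accounting globally rather than per side.}

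\emph{Bad sets are mixed.} Suppose $X\cap V(G)\in\mathcal N_G$ and every partner $Y$ of $X$ has $Y\cap V(G)\in\mathcal D_G$. If $X$ were $G$-pure, $Y\cap V(H)$ would $k$-dominate $H$, so $Y$ would $k$-dominate $G\sqcup H$, contradicting \Cref{Observation: No max size partition has k-dom set}. So $X$ is mixed.

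\emph{No set is bad on both sides.} A set bad on both sides would have a partner that $k$-dominates $G\sqcup H$.

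Hence the total number of absorptions over both sides is at most $m$, which is exactly cancelled by the $-m$ in $|\copart|=|\pi_G|+|\pi_H|-m$. The two applications of the lemma then supply the $+2$, provided $\mathcal D_G$ and $\mathcal D_H$ are both nonempty.

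If, say, $\mathcal D_H=\emptyset$, there are no $G$-pure sets, since a $G$-pure set's partner would have to $k$-dominate $H$ by itself. So $\pi_H$ is already a $k$-coalition partition of $H$ with $|\copart|$ sets. Then $|\copart|\le\COk(H)\le\COk(G)+\COk(H)-2$, because $\COk(G)\ge 2$.

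With this correction your argument becomes the paper's. Its first claim is the case where $\mathcal D_G$ or $\mathcal D_H$ is empty. Its sets $E_H\cup Q_H$ play the role of your bad $G$-side sets, and it absorbs them into one set of $D_G$ before applying the lemma. The disjointness of $E_H\cup Q_H$ from $E_G\cup Q_G$ is precisely the statement that no set of $\copart$ is lost on both sides.
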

\begin{proof}
    Let $\copart = \{X_1, \dots, X_{\COk(G\sqcup H)}\}$ be a maximum-size $k$-coalition partition of $G\sqcup H$. For simplicity, we will write $S \cap V(G) = \{ X \cap V(G) \mid X\in S \text{ and } X \cap V(G) \neq \emptyset\}$
    for any subset $S \subseteq \copart$. By \Cref{Observation: No max size partition has k-dom set}, no set individually $k$-dominates both $V(G)$ and $V(H)$. 

\sta{\label{Claim: No k-dom sets of just V(G)} 
   If no set $X \in \copart$ individually $k$-dominates $V(G)$, then $|\copart| < \COk(H) + \COk(G)$. The same holds for $V(H)$.}

Note that every set $X \in \copart$ intersects $V(G)$. Suppose otherwise that there is some set $X$ such that $X \cap V(G) = \emptyset$. Then $X$ must have a $k$-coalition partner $X'$ such that $X \cup X'$ $k$-dominates $G \sqcup H$. Therefore, $X'$ $k$-dominates $V(G)$ by itself, a contradiction. 
    
 We now show that restricting $\copart$ to $V(G)$ yields a valid $k$-coalition partition of $V(G)$.
Let $\copart_G = \copart \cap V(G)$. Every set $X \in \copart$ has a $k$-coalition partner $X'$, where $X \cup X'$ is a $k$-dominating set of $V(G)$, yet neither $X$ nor $X'$ $k$-dominate $V(G)$ by assumption. Therefore, $\copart_G$ is a $k$-coalition partition of $G$ with exactly $|\copart|$ sets and $|\copart| = |\copart_G| \leq \COk(G)$. Since $\COk(H)\geq 1$, we conclude that $|\copart| < \COk(H) + \COk(G)$. Analogously, if no set in $\copart$ individually $k$-dominates $V(H)$, then $|\copart| \leq \COk(H) < \COk(H) + \COk(G)$. This proves \eqref{Claim: No k-dom sets of just V(G)}.

\sta{\label{Claim: Case with k-dom set of V(G) and V(H))} 
   If $\copart$ contains a set $X_1$ that $k$-dominates $V(G)$ and a set $X_2$ that $k$-dominates $V(H)$, then $|\copart| < \COk(H) + \COk(G)$. }

 By \Cref{Observation: No max size partition has k-dom set}, there is no $k$-dominating set of $G \sqcup H$ in $\copart$, so $X_1 \neq X_2$.
    Define 
    \begin{align*}
    D_G &= \{X \in \copart \mid X \text{ $k$-dominates } V(G)\}, \\
    D_H & = \{X \in \copart \mid X \text{ $k$-dominates } V(H)\}, \\ 
    P_G &= \{X \in \copart \setminus (D_G \cup D_H) \mid X \text{ forms a $k$-coalition with some } X' \in D_H\},\\
    P_H & = \{X \in \copart \setminus (D_G \cup D_H) \mid X \text{ forms a $k$-coalition with some } X' \in D_G\},\\
    F & = \{X \in \copart \setminus (D_G \cup D_H) \mid X \text{ does not form a $k$-coalition with any } X' \in D_H \cup D_G \}.
    \end{align*}

    Note that $\copart = D_G \cup D_H \cup P_G \cup P_H \cup F$, and these subsets are pairwise disjoint with the exception of $P_G$ and $P_H$, as there may exist sets that form $k$-coalitions with sets in both $D_G$ and $D_H$. 

We first consider the restriction of $\copart$ to $V(G)$.
  Observe that $\copart_G$ is not necessarily a $k$-coalition partition of $V(G)$ because some sets in $\copart$ (that do not $k$-dominate $V(G)$) may only form $k$-coalitions with sets that do $k$-dominate $V(G)$. 
  With this in mind, we define
    \begin{align*}
        E_H &=  \{ X \in D_H \mid \text{ every $k$-coalition partner of $X$ is in } D_G \}, \\
        Q_H &= \{ X \in P_H \mid \text{ every $k$-coalition partner of $X$ is in }D_G\}. 
    \end{align*}
  
   If $X \in E_H \cup Q_H$, then $X$ does not $k$-dominate $V(G)$ but $X$ has no $k$-coalition partners in $\copart_G$. Since no set in $P_G$ or $F$ $k$-dominates $V(G)$ and each has a $k$-coalition partner that is not in $D_G$, these sets have a $k$-coalition partner in $\copart_G$.
   Therefore, the sets in $E_H$ and $Q_H$ are precisely the obstructions to $\copart_G$ being a $k$-coalition partition.
To address this, our strategy will be as follows: we will merge the vertices of $E_H$ and $Q_H$ into a single $k$-dominating set, then use \Cref{Lemma: Converting dominating sets to coalitions} to convert it to a collection of sets, where each has a $k$-coalition partner in the collection.

By assumption, there exists at least one set $X_1 \in D_G$. Let
\begin{equation*}
    X_1' = \big(X_1 \cap V(G)\big) \cup \bigg( \bigcup_{X\in E_H} X \cap V(G)\bigg) \cup \bigg(\bigcup_{X\in Q_H} X \cap V(G)  \bigg) .
\end{equation*}
Set $D_G' = (D_G \setminus \{X_1\}) \cup \{X_1'\}$ and note that $|D'_G| = |D_G|$.
Define $\overline{D_H} = D_H \setminus E_H$ and $\overline {P_H} = P_H \backslash Q_H$.
Then define $$\pi= \Big( D_G' \cup \overline{D_H} \cup  \big( P_G \setminus (P_G \cap P_H)  \big) \cup F \cup \overline{P_H} \Big) \cap V(G).$$
Note that we constructed $\pi$ by removing the sets that do not admit a $k$-coalition partner in $V(G)$, then reallocating the vertices from these sets to the $k$-dominating set $X_1$, which we now call $X_1'$. 
Every set $X \in \pi$ that is not a $k$-dominating set of $V(G)$ has at least one $k$-coalition partner $X' \in \pi$. The $k$-dominating sets in $\pi$ are precisely those in $D_G' \cap V(G)$. By \Cref{Lemma: Converting dominating sets to coalitions}, there is a collection of disjoint sets $D_G''= \{Y_1, \dots, Y_d\}$, such that every $Y_i$ forms a $k$-coalition with some $Y_j$, $\bigcup_{X\in D'_G} X = \bigcup_{i=1}^d Y_i$, and $|D_G''| > |D_G'|$.
Therefore, $\copart_G = [D_G'' \cup \overline{D_H} \cup P_G \cup \overline{P_H} \cup F ] \cap V(G)$ is a $k$-coalition partition of $V(G)$ and $|\copart_G| \leq \COk(G)$. By analogous reasoning, we get that $\copart_H = [D_H'' \cup \overline{D_G} \cup P_H \cup \overline{P_G} \cup F ] \cap V(H)$ is a $k$-coalition partition of $V(H)$ and $|\copart_H| \leq \COk(H)$. Combining these, we find that
\begin{align*}
    \COk(G \sqcup H) &= |\copart| \leq |D_G| + |D_H| + |P_G| + |P_H| + |F| \\
    & \leq \left(|D_G''| -1 + |\overline{D_H}| + |P_G| + |\overline{P_H}| +|F| \right)+  \left( |D_H''| -1 + |\overline{D_G}| + |P_H| +|\overline{P_G}| + |F|  \right) \\
    &= |\copart_G|  + |\copart_H| -2 \leq \COk(G) + \COk(H) -2,
\end{align*}
where the second inequality follows from the fact that $|D_G''| \geq |D_G'| + 1$ and $|D_H''| \geq |D_H'| +1$. The last equality follows by observing that the collections of sets that compose $\copart_G$ (and $\copart_H$) are disjoint. This proves \eqref{Claim: Case with k-dom set of V(G) and V(H))}.
 \end{proof}

We have proven an upper bound on the $k$-coalition number of the disjoint union of graphs. When the $k$-coalition partitions of a graph admit a certain ``nice'' structure, we can also lower-bound the $k$-coalition number of the disjoint union.

 \begin{proposition} \label{Prop: For nice partition of G CO_k(G disjoint H) geq CO_k(G)}
Let $G$ and $H$ be graphs and let $k \geq 2$. If $G$ admits a maximum-size $k$-coalition partition $\copart_G = \{ X_1, \dots, X_{\COk(G)}\}$, where $X_1$ forms a $k$-coalition with every $X_i$ for $i > 1$, then $$\COk(G \sqcup H) \geq \COk(G).$$
 \end{proposition}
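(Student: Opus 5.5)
Since $X_1$ forms a $k$-coalition with every other set in $\copart_G$, the natural construction is to absorb all of $V(H)$ into $X_1$. Set $X_1' = X_1 \cup V(H)$ and $\copart = \{X_1', X_2, \dots, X_{\COk(G)}\}$, a partition of $V(G \sqcup H)$ with exactly $\COk(G)$ sets. It remains to check that every set in $\copart$ forms a $k$-coalition with some other set of $\copart$.

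First, $\COk(G) \geq 2$. By \Cref{Observation: No max size partition has k-dom set}, no set of $\copart_G$ is a $k$-dominating set of $G$ with exactly $k$ vertices, so every set has a coalition partner and $|\copart_G| \geq 2$. Moreover, no $X_i$ $k$-dominates $V(G)$, since each $X_i$ is a member of a $k$-coalition.

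\textbf{Non-domination.} No set of $\copart$ $k$-dominates $G \sqcup H$. For $i > 1$, $X_i$ already fails to $k$-dominate some vertex of $V(G)$. For $X_1'$, the vertices of $V(H)$ add no neighbors in $V(G)$, because there are no edges between $V(G)$ and $V(H)$. So a vertex of $V(G)\setminus X_1$ that is not $k$-dominated by $X_1$ in $G$ is still not $k$-dominated by $X_1'$.

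\textbf{Domination of each pair.} For each $i > 1$, the union $X_1' \cup X_i$ $k$-dominates $G \sqcup H$. It contains all of $V(H)$, so no vertex of $V(H)$ needs to be dominated. On $V(G)$, the set $(X_1' \cup X_i) \cap V(G) = X_1 \cup X_i$ $k$-dominates $V(G)$ within $G$, and neighborhoods of vertices of $G$ are unchanged in $G \sqcup H$.

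Hence $X_1'$ forms a $k$-coalition with every $X_i$, $i>1$, and each such $X_i$ has $X_1'$ as a partner. So $\copart$ is a $k$-coalition partition of $G \sqcup H$ with $\COk(G)$ sets, which gives $\COk(G\sqcup H) \geq \COk(G)$.

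The main thing to be careful about is the non-domination of $X_1'$: the fact that $X_1 \cup V(H)$ is still not $k$-dominating relies on the missing vertex lying in $V(G)$, which is guaranteed since $X_1$ fails to $k$-dominate $V(G)$. No hypothesis on $H$ is needed, and the argument even works if $H$ has few vertices.
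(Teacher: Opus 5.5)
Your proposal is correct and matches the paper's proof. The paper uses the same partition $\{X_1 \cup V(H)\} \cup \{X_2, \dots, X_{\COk(G)}\}$ and checks the same two facts: no set $k$-dominates $G \sqcup H$, and every $X_i$ forms a $k$-coalition with $X_1 \cup V(H)$. You spell out these verifications in more detail than the paper does.
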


\begin{proof} 
  Let $\copart_G = \{ X_1, \dots, X_{\COk(G)}\}$ be such a $k$-coalition partition. Then, by \Cref{Observation: No max size partition has k-dom set}, $\copart_G$ does not contain a $k$-dominating set of $G$. Set $$\copart_{G \sqcup H} = \{ X_1 \cup V(H)\} \cup \{X_2, \dots, X_{\COk(G)}\}.$$
   Observe that no set in $\copart_{G \sqcup H}$ individually $k$-dominates $G \sqcup H$, but every set forms a $k$-coalition with $X_1 \cup V(H)$. Then $\copart_{G \sqcup H}$ is a $k$-coalition partition of $V(G\sqcup H)$, and
   \begin{equation*}
       \COk(G\sqcup H) \geq |\copart_G| = \COk(G). \qedhere
   \end{equation*}
\end{proof}

By applying \Cref{Prop: For nice partition of G CO_k(G disjoint H) geq CO_k(G)} to both $G$ and $H$, we obtain the following result. 

\begin{corollary} \label{Cor: disjoint union lower bound by max of k-coalition numbers}
    Fix $k < \min\{|V(G)|, |V(H)|\}$. Suppose that $G$ and $H$ admit maximum-size $k$-coalition partitions $\copart_G = \{ X_1, \dots, X_{\COk(G)}\}$ and $\copart_H = \{ Y_1, \dots, Y_{\COk(H)}\}$, respectively, such that for each $i \in [1, \COk(G)]$ the set $X_i$ forms a $k$-coalition with $X_1$ and for each $j \in [1,\COk(H)]$ the set $Y_j$ forms a $k$-coalition with $Y_1$. Then $$\COk(G \sqcup H) \geq \max\{\COk(G), \COk(H)\}.$$
\end{corollary}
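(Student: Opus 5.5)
This follows almost immediately from \Cref{Prop: For nice partition of G CO_k(G disjoint H) geq CO_k(G)}, applied twice. The plan is to check that each of $G$ and $H$ satisfies the hypotheses of that proposition, and then use the symmetry of the disjoint union.

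First, the hypothesis on $\copart_G$ says every $X_i$ with $i \in [1,\COk(G)]$ forms a $k$-coalition with $X_1$. The case $i=1$ is degenerate, since a set cannot be its own coalition partner. I will read the condition as applying to every $i>1$, which is exactly the hypothesis of \Cref{Prop: For nice partition of G CO_k(G disjoint H) geq CO_k(G)}. Invoking that proposition with $\copart_G$ then gives $\COk(G\sqcup H)\geq \COk(G)$.

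Second, since $G\sqcup H$ and $H\sqcup G$ are the same graph, I apply the proposition again with the roles of $G$ and $H$ swapped, using $\copart_H$ and its distinguished set $Y_1$. This gives $\COk(G\sqcup H)\geq \COk(H)$. Taking the larger of the two bounds yields $\COk(G\sqcup H)\ge\max\{\COk(G),\COk(H)\}$.

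The only step needing care is confirming that the construction in the proposition's proof stays valid. Take $X_1\cup V(H)$ together with $X_2,\dots,X_{\COk(G)}$. We need that no set in this partition $k$-dominates $G\sqcup H$, and that each $X_i\cup X_1\cup V(H)$ does. The first holds because $X_i$ does not $k$-dominate $V(G)$, by \Cref{Observation: No max size partition has k-dom set}. The second holds because $X_1\cup X_i$ $k$-dominates $V(G)$ and $V(H)$ is entirely contained in the set. The assumption $k<\min\{|V(G)|,|V(H)|\}$ guarantees $\COk(G),\COk(H)\geq 2$, so these partitions are nontrivial. I do not expect any step to be a real obstacle.
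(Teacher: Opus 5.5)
Your proposal is correct and is exactly the paper's argument: apply \Cref{Prop: For nice partition of G CO_k(G disjoint H) geq CO_k(G)} once with $\copart_G$, and once with $\copart_H$ using $G\sqcup H = H\sqcup G$. Reading the degenerate $i=1$ case of the hypothesis as $i>1$ is the right interpretation, and your check of the partition $\{X_1\cup V(H), X_2,\dots,X_{\COk(G)}\}$ is sound.
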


While the construction given in the proof of \Cref{Prop: For nice partition of G CO_k(G disjoint H) geq CO_k(G)} may appear wasteful, the following results demonstrate that the construction is optimal when $k$ is larger than the minimum degrees of $G$ and $H$.

\begin{proposition} \label{Prop: k > min degree G disjoint union H upper bound on CO_k}
     Fix $k < \min\{|V(G)|, |V(H)|\}$. If $k > \min\{ \delta(G), \delta(H)\}$, then $\COk(G \sqcup H) \leq \max\{\COk(G), \COk(H)\}$.
\end{proposition}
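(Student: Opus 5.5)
Since $k > \min\{\delta(G),\delta(H)\}$, we may assume without loss of generality that $k > \delta(G)$. Fix a vertex $v \in V(G)$ with $\deg(v) < k$, and let $\copart$ be a maximum-size $k$-coalition partition of $G \sqcup H$. The plan is to show that the restriction of $\copart$ to $V(G)$ or to $V(H)$, after a repair step, is a $k$-coalition partition with at least $|\copart|$ sets. That gives $|\copart| \le \max\{\COk(G),\COk(H)\}$.

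\textbf{Step 1: the structure of $\copart$.} The vertex $v$ cannot be $k$-dominated by any set avoiding it, so every $k$-dominating set of $G\sqcup H$ contains $v$. Let $X_v \in \copart$ be the set containing $v$. By \Cref{Observation: No max size partition has k-dom set}, every set of $\copart$ has a $k$-coalition partner. The union of any $k$-coalition pair must contain $v$. Hence every $Y \in \copart\setminus\{X_v\}$ forms a $k$-coalition with $X_v$, and $X_v$ itself is not $k$-dominating. Consequently $X_v$ fails to $k$-dominate $V(G)$ or fails to $k$-dominate $V(H)$. Let $W \in \{V(G),V(H)\}$ be a side that $X_v$ fails to $k$-dominate, choosing $W=V(G)$ if $X_v$ fails there.

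\textbf{Step 2: restrict to $W$.} Since $Y \cup X_v$ $k$-dominates $W$ for every $Y \neq X_v$, but $X_v$ alone does not, each $Y \cap W$ is nonempty. Split $\copart\setminus\{X_v\}$ into two classes:
\begin{itemize}
\item $\mathcal{D}$, the sets $Y$ with $Y\cap W$ $k$-dominating $W$;
\item $\mathcal{N}$, the remaining sets.
\end{itemize}
First suppose $X_v\cap W \ne \emptyset$. Each $Y\cap W$ with $Y\in\mathcal{N}$ then forms a $k$-coalition in the graph on $W$ with $X_v\cap W$, and $X_v\cap W$ has a partner provided $\mathcal{N}\neq\emptyset$. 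The sets $\{Y\cap W : Y\in\mathcal{D}\}$ are disjoint $k$-dominating sets of $W$. If $\mathcal{D}\ne\emptyset$, apply \Cref{Lemma: Converting dominating sets to coalitions} to them. This replaces them by strictly more disjoint sets with the same union, each of which has a $k$-coalition partner among themselves and so is not $k$-dominating. The resulting collection
\[
\{X_v\cap W\}\cup\{Y\cap W: Y\in\mathcal{N}\}\cup\pi
\]
partitions $W$ and has at least $|\copart|$ members. Every member has a $k$-coalition partner, except possibly $X_v\cap W$ when $\mathcal{N}=\emptyset$. In that case $X_v\cap W$ should be merged into one set of $\pi$. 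That merged set is still not $k$-dominating, since it sits inside $X_v \cup Y$ restricted to $W$; if that is not clear, use $Y\cup X_v$ directly. The merge costs one set, but the lemma gained at least one, so the count stays at least $|\copart|$. Second, if $X_v\cap W=\emptyset$, then every $Y\ne X_v$ lies in $\mathcal{D}$. The sets $Y \cap W$ then partition $W$ into $|\copart|-1$ disjoint $k$-dominating sets, and the lemma again yields a $k$-coalition partition of $W$ with at least $|\copart|$ sets.

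\textbf{Main obstacle.} The delicate part is the bookkeeping in Step 2. I must check that every set in the final collection of $W$ is non-$k$-dominating and has a partner, and that nothing is counted twice. This is especially so for the degenerate case where $X_v\cap W$ has no natural partner and must be absorbed without losing the net gain from \Cref{Lemma: Converting dominating sets to coalitions}. I would verify this carefully, using the fact that the lemma's output sets are pairwise-coalition sets; if needed, I would absorb $X_v\cap W$ into the lemma's input instead. In all cases the conclusion is $|\copart|\le \COk(G)$ or $|\copart| \le \COk(H)$, which gives the claim.
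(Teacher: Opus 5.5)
Your argument is correct and is essentially the paper's proof. The paper also isolates the part containing the low-degree vertices and notes that every other part partners only with it. It then restricts to a side this part fails to $k$-dominate and splits into the same three cases: $X_v\cap W=\emptyset$; $X_v\cap W\neq\emptyset$ with $\mathcal{N}\neq\emptyset$; and $\mathcal{N}=\emptyset$. In each case it applies \Cref{Lemma: Converting dominating sets to coalitions}. Working with a single vertex $v$ of degree less than $k$, rather than the whole set $V^-$, is a small simplification. It makes the paper's preliminary case, where no part contains all of $V^-$, unnecessary.

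The one step that fails as written is your primary justification in the case $\mathcal{N}=\emptyset$. That the merged set lies inside $(X_v\cup Y)\cap W$ says nothing, because $Y\cap W$ is itself $k$-dominating for $Y\in\mathcal{D}$. Moreover, the parts of $\pi$ produced by the lemma need not lie inside a single $Y\cap W$, since the lemma moves vertices between the input sets.

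Your fallback is the right fix, and it is exactly what the paper does (it merges $X_1$ with $X_2$ before invoking the lemma). Absorb $X_v\cap W$ into one $Y_0\cap W$ with $Y_0\in\mathcal{D}$. This leaves $|\mathcal{D}|=|\copart|-1$ disjoint $k$-dominating sets covering $W$, and the lemma then yields at least $|\copart|$ parts, each with a partner.

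Merging after the lemma is also valid, but for a different reason. If $X_v\cap W$ forms a $k$-coalition with some part of $\pi$, no merge is needed. Otherwise $(X_v\cap W)\cup P$ is not $k$-dominating for every $P\in\pi$. Then the merged part $P\cup(X_v\cap W)$ still forms a $k$-coalition with the partner of $P$ in $\pi$.
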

\begin{proof} \setcounter{tbox}{0}
 Let $V^- = \{ v \in V(G \sqcup H) \mid \deg(v) < k \}$. If $D$ is a $k$-dominating set of $G\sqcup H$, then $V^- \subseteq D$. 
Let $\copart = \{X_1, \dots, X_{\COk(G\sqcup H)}\}$ be a maximum-cardinality $k$-coalition partition of $G \sqcup H$, which has no $k$-dominating set by \Cref{Observation: No max size partition has k-dom set}.

\sta{\label{Claim: No set contains V-} If there is no set in $\copart$ that contains $V^-$, then $|\copart| \leq \max\{\COk(G) , \COk(H)\}$.}

The set $X_1$ must have a $k$-coalition partner, say $X_2$, where $X_1 \cup X_2$ is a $k$-dominating set. Then $V^- \subseteq X_1 \cup X_2$ and so every other $k$-dominating set must intersect $X_1$ and $X_2$. Since neither set individually contains $V^-$, it follows that $\copart = \{X_1, X_2\}$. Since $G$ and $H$ have more than $k$ vertices, each has $k$-coalition number at least $2$. Therefore, $|\copart| = 2 \leq \max\{\COk(G) , \COk(H)\}$, which proves \eqref{Claim: No set contains V-}.\\

We may now assume that there is a set, say $X_1$, that contains $V^-$. Then each $X_i \in \copart \setminus \{X_1\}$ can only form a $k$-coalition with $X_1$. By \Cref{Observation: No max size partition has k-dom set}, the set $X_1$ cannot $k$-dominate both $V(G)$ and $V(H)$. Without loss of generality, we assume that $X_1$ does not $k$-dominate $V(H)$. Let $\copart \cap V(H) = \{ X_i \cap V(H) \mid X_i\in \copart \text{ and } X_i \cap V(H) \neq \emptyset\}$.
For every $X_i \in \copart\setminus \{X_1\}$, the intersection $X_i \cap V(H)$ is nonempty, since the set $(X_i \cup X_1) \cap V(H)$ $k$-dominates $V(H)$. The set $X_i \cap V(H)$ is either a $k$-dominating set of $V(H)$ or forms a $k$-coalition with $X_1 \cap V(H)$. There are three cases:\\

\sta{\label{Case: Empty intersection} If $X_1 \cap V(H) = \emptyset$ (which may occur when $V^- \subseteq V(G)$), then $|\copart| \leq \COk(H)$.}
In this setting,
    \begin{equation}
        |\copart| =|\copart \cap V(H)| + 1. \label{eq: b}
    \end{equation}
        For every $X_i \in \copart$, the set $X_i \cap V(H)$ is a $k$-dominating set of $V(H)$. Hence, $\copart\cap V(H)$ is a set of $k$-dominating sets of $V(H)$. By \Cref{Lemma: Converting dominating sets to coalitions}, there is a collection of disjoint sets $\copart_H$ such that every set in $\copart_H$ has a $k$-coalition partner in $\copart_H$, $|\copart_H|> |\copart|$, and $\bigcup_{X \in \copart} X \cap V(H) = \bigcup_{Y \in \copart_H}Y$.
        Since $V(H) = \bigcup_{X \in \copart}X \cap V(H)$, $\copart_H$ is a $k$-coalition partition of $V(H)$. Then 
        \begin{equation} \label{eq: c}
           |\copart \cap V(H)| < |\copart_H| \leq \COk(H).
        \end{equation}
    Then \eqref{eq: b} and \eqref{eq: c} imply that $\COk(G \sqcup H) = |\copart| \leq \COk(H)$, which proves \eqref{Case: Empty intersection}.\\
       
\sta{\label{Case: Non-zero intersection 1}
If $X_1 \cap V(H) \neq \emptyset$ and some set $X_i \in \copart$ does not $k$-dominate $V(H)$, then $|\copart| \leq \COk(H)$.
}
In this case, $ |\copart| =|\copart \cap V(H)|$. If there are $k$-dominating sets of $V(H)$ in $\copart \cap V(H)$ (which must be minimal),  then by \Cref{Lemma: Converting dominating sets to coalitions}, we may split up the $k$-dominating sets into $k$-coalition partners. If we denote the resulting partition by $\copart_H$, \Cref{Lemma: Converting dominating sets to coalitions} implies that $|\copart| \leq |\copart_H|$. Since every set that is not a $k$-dominating set forms a $k$-coalition with $X_1$ and there is at least one such set, $\copart_H$ is a $k$-coalition partition. Therefore, $|\copart| =|\copart \cap V(H)| \leq |\copart_H| \leq \COk(H)$, which proves \eqref{Case: Non-zero intersection 1}.

\sta{\label{Case: Non-zero intersection 2}
If $X_1 \cap V(H) \neq \emptyset$ and, for every $i > 1$, the set $X_i \cap V(H)$ is a $k$-dominating set of $V(H)$, then $|\copart| \leq \COk(H)$.}

Note that $X_1$ does not have $k$-coalition partner in $\copart \cap V(H)$. Define 
        $$\copart' = (\copart \setminus \{X_1, X_2\}) \cup \{X_1 \cup X_2\}.$$
       Then $\copart' \cap V(H)$ is a set of $k$-dominating sets of $V(H)$. As above, by \Cref{Lemma: Converting dominating sets to coalitions} there is a collection of sets $\copart_H$ where every set in the collection has a $k$-coalition partner, $|\copart_H| \geq |\copart' \cap  V(H)|$, and $\copart_H$ is a partition of $V(H)$. Moreover, $\copart_H$ is a $k$-coalition partition  and so
        \begin{equation*}
            |\copart| = |\copart' \cap V(H)| +1 \leq |\copart_H | \leq \COk(H),
        \end{equation*}
        which proves \eqref{Case: Non-zero intersection 2}.\\

Therefore, we conclude that $\COk( G \sqcup H) = |\copart| \leq \COk(H)$. If $X_1$ does not $k$-dominate $V(G)$, by an analogous argument, $\COk(G \sqcup H) \leq \COk(G)$. Hence, 
\begin{equation*}
    \COk(G \sqcup H) \leq \max\{\COk(G), \COk(H)\}. \qedhere
\end{equation*}
\end{proof}

\begin{proposition}
      Fix $k < \min\{|V(G)|, |V(H)|\}$. If $k > \max\{\delta(G), \delta(H)\}$, then $$\COk(G \sqcup H) = \max\{\COk(G), \COk(H) \}.$$
\end{proposition}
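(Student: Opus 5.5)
The upper bound is immediate. Since $k > \max\{\delta(G),\delta(H)\} \geq \min\{\delta(G),\delta(H)\}$, \Cref{Prop: k > min degree G disjoint union H upper bound on CO_k} gives $\COk(G\sqcup H) \leq \max\{\COk(G),\COk(H)\}$. So all the work is in the lower bound. By symmetry, it suffices to prove $\COk(G\sqcup H) \geq \COk(G)$. I will do this through \Cref{Prop: For nice partition of G CO_k(G disjoint H) geq CO_k(G)}, by showing that every maximum-size $k$-coalition partition of $G$ has the ``star'' structure required there.

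The key step is the following. Let $V^-_G = \{v \in V(G) \mid \deg(v) < k\}$. This set is nonempty because $k > \delta(G)$, and every $k$-dominating set of $G$ contains $V^-_G$. Fix a maximum-size $k$-coalition partition $\copart_G$. By \Cref{Observation: No max size partition has k-dom set}, every set in it has a $k$-coalition partner.

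Suppose first that no single set of $\copart_G$ contains $V^-_G$. Then every coalition pair $X,X'$ must jointly cover $V^-_G$. Take one pair $X_1,X_2$. Any third set $X_3$ would need a partner $X'$ with $V^-_G \subseteq X_3 \cup X'$. Since $X_3$ is disjoint from $X_1 \cup X_2 \supseteq V^-_G$, this forces $V^-_G \subseteq X'$, which is a contradiction. Hence $|\copart_G| = 2$, and the pair $\{X_1,X_2\}$ trivially satisfies the hypothesis of \Cref{Prop: For nice partition of G CO_k(G disjoint H) geq CO_k(G)}, with $X_1$ forming a $k$-coalition with $X_2$.

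Otherwise some set, call it $X_1$, contains $V^-_G$. Every other set is disjoint from $V^-_G$, so two sets both different from $X_1$ cannot form a coalition. Thus each $X_i$ with $i>1$ forms a $k$-coalition with $X_1$, which is exactly the required structure.

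In either case \Cref{Prop: For nice partition of G CO_k(G disjoint H) geq CO_k(G)} yields $\COk(G\sqcup H)\geq \COk(G)$, and symmetrically $\COk(G\sqcup H)\geq \COk(H)$, since $k>\delta(H)$ as well.

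The only delicate point is checking that the hypotheses of \Cref{Prop: For nice partition of G CO_k(G disjoint H) geq CO_k(G)} hold. There are two things to verify. First, $\copart_G$ has at least two sets: $\COk(G)\ge 2$ because $|V(G)|>k$ (split a $k$-dominating set, e.g.\ $V(G)$, via \Cref{Lemma: Converting dominating sets to coalitions}). Second, in the construction $X_1\cup V(H)$ together with each $X_i$ really $k$-dominates $G\sqcup H$ while no single set does. This holds because $X_1\cup V(H)$ cannot $k$-dominate $V(G)$, as $X_1$ does not. This is exactly where the requirement that \emph{both} minimum degrees are below $k$ matters: the star structure is needed for whichever of $G$, $H$ attains the maximum, and we do not know in advance which one that is.
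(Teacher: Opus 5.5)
Your proof is correct and follows the paper's argument. The upper bound comes from \Cref{Prop: k > min degree G disjoint union H upper bound on CO_k}, and the lower bound comes from showing that every maximum-size $k$-coalition partition of $G$ (resp.\ $H$) is either a single coalition pair or a star centered at the set containing $V^-_G$ (resp.\ $V^-_H$), then applying \Cref{Prop: For nice partition of G CO_k(G disjoint H) geq CO_k(G)} to both graphs, which is exactly \Cref{Cor: disjoint union lower bound by max of k-coalition numbers}. Your case analysis is slightly cleaner than the paper's, which also treats $k > \Delta(G \sqcup H)$ separately; your argument does not need that case.
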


\begin{proof}
 Let $V^-_G = \{ v \in V(G) \mid \deg(v) < k \}$ and $V^-_H = \{ v \in V(H) \mid \deg(v) < k \}$.  By assumption, $V^-_G\neq \emptyset$ and $V^-_H \neq \emptyset$. Set $V^- = V^-_G \cup V^-_H$. If $D$ is a $k$-dominating set of $G\sqcup H$, then $V^- \subseteq D$.\\
 
    \paragraph{\textbf{When $k$ is large}}  If $k >  \Delta(G \sqcup H) =\max\{\Delta (G), \Delta(H)\} $, then \Cref{Observation: when k > max degree} gives  $$\COk(G \sqcup H) = 2 = \max\{\COk(G), \COk(H)\}.$$
    We now assume that $k \leq \Delta(G \sqcup H)$.
\\
\paragraph{\textbf{Lower bound}} Let $\copart_G = \{X_1, \dots, X_{\COk(G)}\}$ be a maximum-cardinality $k$-coalition partition of $V(G)$. Suppose there exists a set $X_1 \in \copart_G$ such that $V^-_G \subseteq X_1$. Then for every $i > 1$ the set $X_i$ forms a $k$-coalition with only $X_1$. If there is no set $X \in \copart_G$ that contains $V^-$, then necessarily $\copart_G = \{X_1, X_2\}$ where $V^-_G \subseteq X_1 \cup X_2$. In this case, it is still true that every set in $\copart \setminus\{X_1\}$ forms a $k$-coalition with $X_1$. By an analogous argument, we can show that for any maximum-size $k$-coalition partition $\copart_H$ of $H$, there exists a set $Y_1$ such that every set in $\copart_H \setminus \{Y_1\}$ forms a $k$-coalition with only $Y_1$. By \Cref{Cor: disjoint union lower bound by max of k-coalition numbers}, it follows that $\COk(G \sqcup H) \geq \max\{\COk(G), \COk(H) \}$. \\
    
    \paragraph{\textbf{Upper bound.}}
    Since $k > \max\{\delta(G), \delta(H)\} \geq \min\{\delta(G), \delta(H)\}$, by \Cref{Prop: k > min degree G disjoint union H upper bound on CO_k} we have  $\COk(G \sqcup H) \leq \max\{\COk(G), \COk(H)\}$, as desired.
\end{proof}

We may directly apply this proposition to get the following nice corollary. 

 \begin{corollary}
     Let $T_1, \dots, T_m$ be trees with more than $k$ vertices and let $F$ be the forest $F = T_1 \sqcup T_2 \sqcup \dots \sqcup T_m$. Then $\COk(F) = \max\{\COk(T_1), \COk(T_2), \dots, \COk(T_m)\}$ for all $k \geq 2$.
 \end{corollary}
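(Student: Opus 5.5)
We want to show $\COk(F) = \max_i \COk(T_i)$ for the forest $F = T_1 \sqcup \dots \sqcup T_m$. The plan is induction on $m$, applying the immediately preceding proposition (the case $k > \max\{\delta(G),\delta(H)\}$) at each step.

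The base case $m=1$ is trivial. Every tree with more than $k\ge 2$ vertices has a leaf, so $\delta(T_i)=1<k$. For the inductive step, set $G = T_1 \sqcup \dots \sqcup T_{m-1}$ and $H = T_m$. The following hypotheses of the proposition hold:
\begin{itemize}
  \item $|V(G)| \ge |V(T_1)| > k$ and $|V(H)| > k$, so $k < \min\{|V(G)|,|V(H)|\}$;
  \item $\delta(G) = \min_{i<m}\delta(T_i) = 1$ and $\delta(H)=1$, so $k \ge 2 > \max\{\delta(G),\delta(H)\}$.
\end{itemize}
The proposition therefore gives
\[
\COk(F) = \COk(G\sqcup H) = \max\{\COk(G), \COk(T_m)\}.
\]
By the inductive hypothesis, $\COk(G) = \max_{i<m}\COk(T_i)$, and substituting this finishes the induction.

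There is one point to check. The proposition is stated for graphs $G,H$, and here $G$ is a disconnected forest, so I must confirm that its proof never used connectivity. Rereading it, only the degree sets $V^-_G$, $V^-_H$, \Cref{Prop: k > min degree G disjoint union H upper bound on CO_k}, and \Cref{Cor: disjoint union lower bound by max of k-coalition numbers} are used, and none of these needs connectivity. So the induction is valid, and this verification is the only mild obstacle. If one prefers to avoid the induction, the same proof applies directly to $F$ with $V^-$ the set of all vertices of degree less than $k$, since $V^-$ contains every leaf of every $T_i$ and so meets every component.
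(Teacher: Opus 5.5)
Your proof is correct and is essentially the paper's argument. The paper obtains the corollary by directly applying the proposition for $k > \max\{\delta(G),\delta(H)\}$ and leaves the induction on $m$ implicit; you make that induction and the hypothesis checks ($\delta(T_i)=1<k$, every vertex count exceeding $k$) explicit, and your connectivity concern is moot because that proposition is stated and proved for arbitrary graphs $G$ and $H$.
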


We now prove an analogue to \Cref{Theorem: Join Limit} for disjoint union. 

\begin{theorem} \label{Theorem: Disjoint Union limit bound}
    Fix $k,n \in \N$ such that $2 \leq k \leq n$. If $G_1, \dots, G_m$ are (possibly distinct) $n$-vertex graphs where $m \geq 2$, then $$\COk(G_1 \sqcup \dots \sqcup G_m) \leq (n-k+1)(n-k+2) \leq n^2.$$
\end{theorem}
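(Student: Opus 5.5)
The plan is to bound the size of an arbitrary maximum-cardinality $k$-coalition partition $\copart$ of $G = G_1 \sqcup \dots \sqcup G_m$ by looking at a single vertex. By \Cref{Observation: No max size partition has k-dom set}, every set in $\copart$ has a $k$-coalition partner. Every component has $n$ vertices, so $\Delta(G) \le n-1$. If $k > \Delta(G)$, \Cref{Observation: when k > max degree} gives $\COk(G)=2$ and we are done. Otherwise \Cref{Lemma: max number of coalition partners} says each set of $\copart$ has at most $\Delta(G)-k+2 \le n-k+1$ partners.

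Fix a vertex $v \in V(G_1)$. Every $k$-dominating set of $G$ either contains $v$ or contains at least $k$ vertices of $N(v)$; in particular every coalition pair $X \cup Y$ has this property.
\begin{itemize}
\item Call a set $Z \in \copart$ \emph{heavy} if $v \in Z$ or $|Z \cap N(v)| \ge k$, and \emph{light} otherwise.
\item Since the sets are disjoint and $|N(v)| \le n-1$, the number $h$ of heavy sets satisfies $h \le 1 + \lfloor (n-1)/k \rfloor$. A short check for $k \ge 2$ and $n \ge k$ gives $h \le n-k+1$.
\item Any set that is disjoint from $N[v]$ cannot pair with a light set, since together they would neither contain $v$ nor $k$ neighbours of $v$. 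Hence it must be a partner of some heavy set. Likewise, any light set with a heavy partner is a partner of some heavy set.
\end{itemize}
So, apart from light sets that pair only with light sets, $\copart$ consists of the heavy sets together with their partners. This part has at most $h + h(n-k+1) = h(n-k+2) \le (n-k+1)(n-k+2)$ sets. The final inequality $(n-k+1)(n-k+2) \le n^2$ is immediate since $k \ge 2$.

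The main obstacle is the light sets all of whose partners are light. Each such set meets $N(v)$ in between $1$ and $k-1$ vertices, and any two light partners must together meet $N(v)$ in at least $k$ vertices. I would first try to choose $v$ cleverly so that this class is empty or can be absorbed. One option is to take $v$ of minimum degree in a component. If light--light coalitions exist at $v$, the pair uses at least $k$ vertices of $N(v)$, which should reduce the number of available heavy sets. Concretely, I would try to show that light--light pairs consume neighbourhood vertices that would otherwise support heavy sets. Then a trade-off between the number of light-only sets and $h$ would keep the total at most $(n-k+1)(n-k+2)$.

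A fallback is to exploit the fact that $m \ge 2$. Choose $v$ in a component $G_j$ and compare with a vertex $w$ in a different component. Since $N(v) \cap N(w) = \emptyset$, every coalition pair must satisfy the neighbourhood condition at both $v$ and $w$ simultaneously. The sets that are light at $v$ and pair only with light sets must then be heavy at $w$, or be partners of sets heavy at $w$. This lets the double count close with the same bound $h(n-k+2)$.
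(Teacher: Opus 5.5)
There is a genuine gap, and it is the class you yourself flag: light sets all of whose partners are light. Nothing in your argument bounds how many such sets there are, and $h$ does not control them. They meet $N(v)$ in between $1$ and $k-1$ vertices and can pair with one another, so a priori there can be roughly $\deg(v)$ of them. The ``trade-off'' paragraph is a plan, not an argument. The fallback is also wrong as stated. A set that is light at $v$ and has only light partners need not be heavy at $w$, nor a partner of a set heavy at $w$. For example, take two disjoint sets, each containing $\lceil k/2\rceil$ neighbours of $v$ and $\lceil k/2\rceil$ neighbours of $w$ but neither $v$ nor $w$. Both are light at both vertices, yet their union satisfies the neighbourhood condition at $v$ and at $w$, so nothing you have used prevents them from being partners. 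Even if the claim held, the double count would give a sum of two terms of the form $h(n-k+2)$, not one, so it would not yield $(n-k+1)(n-k+2)$.

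The missing idea, which is the paper's route, is to work with the whole component $V(G_1)$ instead of $N[v]$. A coalition pair may meet $N[v]$ only in $v$. By contrast, any coalition pair $k$-dominates $G_1$ and so contains at least $k$ vertices of $G_1$. Hence at most $n-k+2$ sets of $\copart$ meet $V(G_1)$. A set disjoint from $V(G_1)$ must have a partner whose trace on $G_1$ alone $k$-dominates $G_1$. So at the component level there is no analogue of your light--light class: every set either meets $G_1$ and is counted directly, or is a partner of a set that does.

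Multiplying these two counts naively gives $(n-k+2)^2$. To reach $(n-k+1)(n-k+2)$, note also that only sets whose trace $k$-dominates $G_1$ can have partners outside $G_1$. These traces are disjoint and each has at least $k$ vertices, so there are at most $\lfloor n/k\rfloor$ of them. This gives $|\copart| \le (n-k+2) + \lfloor n/k\rfloor (n-k+1)$. Write $a=n-k$; then $a\ge 1$, since $k\le\Delta\le n-1$. Using $\lfloor n/k\rfloor \le 1+a/2$, the difference $(a+1)(a+2)-(a+2)-(1+a/2)(a+1)=(a+2)(a-1)/2$ is nonnegative, which closes the bound.
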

\begin{proof}
   If $k > \Delta(G_1 \sqcup \dots \sqcup G_m)$, then by \Cref{Observation: when k > max degree}, $\COk(G_1 \sqcup \dots \sqcup G_m) = 2$, and since $2 \leq k \leq n$ it holds that $ 2\leq (n-k+1)(n-k+2)$. We may now assume $k \leq \Delta(G_1 \sqcup \dots \sqcup G_m))$.
   
    Let $\copart = \{ X_1, \dots, X_{\COk(G_1 \sqcup \dots \sqcup G_m)} \}$ be a maximum-size $k$-coalition partition of $G_1 \sqcup \dots \sqcup G_m$. Each $X_i \in \copart$ intersects $G_1$ or forms a coalition with some set $X_j$ that intersects $G_1$. 
    By \Cref{Observation: No max size partition has k-dom set}, there are no $k$-dominating sets in $\copart$. Therefore, there exist two $k$-coalition partners $X_1, X_2 \in \copart$ whose union $k$-dominates $G_1$ and $|(X_1 \cup X_2) \cap G_1| \geq k$. There are at most $n - k + 2$ sets in $\copart \setminus \{X_1,X_2\}$ that intersect $G_1$. 
  By \Cref{Lemma: max number of coalition partners}, each such set has at most $\Delta(G_1 \sqcup \dots \sqcup G_m) -k +3\leq (n-1) - k +2$ distinct $k$-coalition partners. Then
  \begin{equation*}
 \COk(G_1 \sqcup \dots \sqcup G_m) = |\copart| \leq (n -k +1) (n -k +2) \leq n^2,
  \end{equation*}
  where the last inequality follows from the fact that $k\geq 2$. 
\end{proof}

\begin{corollary}
    Fix $k,n,m \in \N$ such that $2 \leq k \leq n$ and $2\leq m$. For any $n$-vertex graph $G$, 
    $$\COk(m*G) \leq n^2,$$
    where $m*G = G \sqcup \dots \sqcup G$ denotes the disjoint union of a graph with itself $m$ times.
\end{corollary}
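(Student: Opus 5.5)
The corollary follows from \Cref{Theorem: Disjoint Union limit bound}, so the plan is to specialize it.

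Take $G_1 = G_2 = \dots = G_m = G$ in \Cref{Theorem: Disjoint Union limit bound}. The hypotheses match exactly: each $G_i$ is an $n$-vertex graph, $m \geq 2$, and $2 \leq k \leq n$. The disjoint union $G_1 \sqcup \dots \sqcup G_m$ is then precisely $m*G$, taking $m$ vertex-disjoint copies of $G$ as in the convention on abusing notation for copies.

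The theorem then gives
\[
\COk(m*G) \leq (n-k+1)(n-k+2).
\]
It remains to check that this is at most $n^2$. Since $k \geq 2$, we have $n-k+1 \leq n-1$ and $n-k+2 \leq n$. Both factors are positive because $k \leq n$, so the product is at most $(n-1)n \leq n^2$.

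There is essentially no obstacle here; the only point to verify is that the hypotheses of the theorem (including $m\geq 2$ and $k\le n$) are inherited verbatim. The substantive content lives entirely in \Cref{Theorem: Disjoint Union limit bound}.
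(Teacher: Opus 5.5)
Your proposal is correct and is exactly the paper's route: the corollary is the special case $G_1=\dots=G_m=G$ of \Cref{Theorem: Disjoint Union limit bound}, with the hypotheses $2\le k\le n$ and $m\ge 2$ carried over unchanged. That theorem's statement already contains the final inequality $(n-k+1)(n-k+2)\le n^2$, so your explicit check of it is correct but not strictly needed.
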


\section{Distinct sizes of $k$-coalition partitions} \label{sect: I_k(G)}
The previous work on $k$-coalitions by Jafari, Alikhani, and Bakhshesh \cite{JAB25} and Bre\v{s}ar, Henning, and Samadi \cite{BHS25} has focused on computing and bounding the $k$-coalition number.
Haynes, Hedetniemi, Hedetniemi, McRae, and Mohan \cite{HHHMM20} proposed the study of the minimum number of sets in a coalition partition. This was investigated by Bakhshesh and Henning \cite{MinMinCoalition}. In this section, we generalize this to the minimum $k$-coalition number, which we denote by $\cok(G)$. We also propose a new question: what are all possible sizes of the $k$-coalition partitions of a given graph? To this end, we provide some definitions.

\begin{definition}
    Let $\cok(G)$ denote the minimum number of sets in any $k$-coalition partition of $G$. Let $$\kset(G) = \{n \in \N \mid \text{there exists a $k$-coalition partition of $G$ with exactly $n$ sets} \}.$$ 
\end{definition}

By definition, $\kset(G)\subseteq[\cok(G), \COk(G)]$; the main theorem of this section will show that $\kset(G)$ is the entire interval.

\begin{theorem} \label{Theorem: I_k is an interval}
    Fix $k\geq 2$. Let $G$ be a graph. The set $\kset(G)$ is the interval $\left[\cok(G), \COk(G)\right ]$.
\end{theorem}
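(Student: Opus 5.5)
The plan is to prove a ``one-step descent'' statement: if $\copart$ is a $k$-coalition partition of $G$ with $|\copart| = m > \cok(G)$, then $G$ has a $k$-coalition partition with exactly $m-1$ sets. Starting from a maximum partition of size $\COk(G)$ and applying this repeatedly, every integer in $[\cok(G),\COk(G)]$ lies in $\kset(G)$. Together with the trivial inclusion $\kset(G)\subseteq[\cok(G),\COk(G)]$, this proves the theorem. I do not expect the descent to hold for arbitrary partitions without using $m>\cok(G)$, so I keep that hypothesis available, mainly for small $m$.

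The basic move is to merge two sets. Fix a $k$-coalition pair $A,B\in\copart$ and a third set $C\notin\{A,B\}$, and replace $A$ and $C$ by $A\cup C$. The key observation is monotonicity: if $E\cup A$ or $E\cup C$ was $k$-dominating, then so is $E\cup(A\cup C)$. Hence every set that previously had a partner among $A,C$ still has the partner $A\cup C$, as long as $A\cup C$ itself is not $k$-dominating. In that case $A\cup C$ forms a $k$-coalition with $B$, since $A\cup B\subseteq A\cup C\cup B$. Sets that are $k$-dominating with exactly $k$ vertices are unaffected. So the merge succeeds whenever $A\cup C$ is not $k$-dominating, and symmetrically for $B\cup C$. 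If $\copart$ contains a $k$-dominating $k$-set $S$, I first try to merge $S$ into a non-dominating set $A$. If $A\cup S$ is not $k$-dominating, it inherits $A$'s partners, because those unions only grow.

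The main obstacle is the ``rigid'' case, where every admissible merge produces a $k$-dominating set of size larger than $k$. For this case I would use a vertex-by-vertex transfer instead of a wholesale merge. I move the vertices of $C$ into $A$ one at a time. Let $v$ be the first vertex whose addition makes $A\cup C_0\cup\{v\}$ $k$-dominating, where $C_0$ is the set of vertices already moved. At that point, $A':=A\cup C_0$ is non-dominating and $A'\cup\{v\}$ is $k$-dominating. I then redistribute the vertices so that the set count drops by exactly one: $A'$, $\{v\}\cup(C\setminus C_0\setminus\{v\})$ and $B$ should all remain non-dominating with partners, and the leftover part of $C$ together with $v$ should form a coalition with $A'$. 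The exact redistribution needs checking, and I expect the bookkeeping of which other sets lose partners to be the delicate point. Here I would again use monotonicity together with the fact that $A'$ is ``one vertex short'' of being $k$-dominating.

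If even this fails, the rigid structure forces strong conditions: every union of two sets among some three sets is $k$-dominating. I would try to show that this structure yields a smaller partition directly, contradicting nothing only when $m=\cok(G)$. Concretely, I would repartition $V(G)$ into a minimal $k$-dominating set split in two, as in the proof of \Cref{Lemma: Converting dominating sets to coalitions}, plus the remaining sets. This is the step where I am least sure of the argument and would work out the details first.
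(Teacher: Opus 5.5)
\textbf{There is a genuine gap: the rigid case, where all the difficulty lies, has no argument.} The only move you actually justify is merging two non-dominating sets that are not coalition partners, which is exactly the paper's merging step. Your preliminary move of merging a $k$-dominating $k$-set $S$ into $A$ is never available. By the very monotonicity you invoke, $A\cup S\supseteq S$ is $k$-dominating with more than $k$ vertices. So everything rests on the rigid case, in which the non-dominating sets pairwise form $k$-coalitions.

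This case genuinely occurs strictly above $\cok(G)$. In $K_{10}$ with $k=3$, five disjoint pairs form a $3$-coalition partition in which any two pairs are partners. Every merge yields a $3$-dominating set on $4$ vertices, so no merge is legal, yet $\cok(K_{10})=4$. Your transfer does nothing here: adding a single vertex to a pair already gives a $3$-dominating set, so $C_0=\emptyset$. In general the transfer cannot lower the count, because $v$ stays behind and no set is emptied. The redistribution that is supposed to remove a set is never specified. The fallback of splitting a minimal $k$-dominating set in two increases the number of sets rather than decreasing it. Finally, for $m\le\COk(G)$ your one-step descent claim is equivalent to the theorem itself, so the reduction gains nothing without a concrete mechanism for the rigid case.

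\textbf{The missing ingredient is control over where merging gets stuck.} The paper gets this from a vertex $v$ of minimum degree, and never attempts descent from an arbitrary partition.

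For $k>\delta(G)$, every $k$-dominating set contains all vertices of degree less than $k$. For $k=\delta(G)$, every $k$-dominating set contains $v$ or all of $N(v)$. In either case, in a maximum partition the sets other than the distinguished one(s) can be merged into a tail set that never becomes $k$-dominating, giving every size down to $2$. For $k=\delta(G)$ a two-set partition is built directly, and $K_3$ with $k=2$ is the lone exception.

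For $k<\delta(G)$, first lump the sets that miss $N[v]$ into one. Merging from a maximum partition can only halt when the sets meeting $N(v)$ but not containing $v$ pairwise contain at least $k$ neighbors of $v$. Then at most one of them has fewer than $k/2$ neighbors of $v$, so at most $\lceil 2\delta(G)/k\rceil+2\le\delta(G)-k+4$ sets remain. This covers $[\delta(G)-k+4,\COk(G)]$.

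The range below is covered from the other end. Start from a minimum partition with a non-dominating set $X_1$, and move vertices into $X_1$ while it stays non-dominating. At termination, some $v\notin X_1$ has exactly $k-1$ neighbors in $X_1$, $V(G)\setminus N[v]\subseteq X_1$, and $X_1\cup\{u\}$ is $k$-dominating for every $u\notin X_1$. Splitting off singletons one at a time then realizes every size from $\cok(G)$ up to $\deg(v)-k+3\ge\delta(G)-k+3$. The two ranges overlap. Your plan lacks both this upward construction and the degree-based bound on where merging halts.
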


The proof is divided into cases depending on the relationships between $k$ and $\delta(G)$. We first prove a few results that we will require later.

\begin{proposition} \label{Prop: Min size partition of K_n}
    Let $k \leq n$. The minimum $k$-coalition partition number of $K_n$ is 
    \begin{align*}
        \cok(K_n) = \begin{cases}
            \frac{n}{k} & \text{if $k \mid n$}, \\
            \lceil \frac{n}{k} \rceil  & \text{if $k \nmid n$ and $k \nmid n+1$}, \\
             \lceil \frac{n}{k} \rceil +1   & \text{if $k \nmid n$ and $k \mid n+1$}.
        \end{cases}
    \end{align*}
\end{proposition}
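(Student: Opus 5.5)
The plan is to first characterize the $k$-dominating sets of $K_n$, then read off the allowed part sizes, and finally match a counting lower bound with explicit partitions.

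In $K_n$, a set $S\neq V(K_n)$ is $k$-dominating if and only if $|S|\ge k$. Indeed, every vertex outside $S$ is adjacent to all of $S$. The whole vertex set is trivially $k$-dominating, and since $k\le n$ it also has size at least $k$.

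This gives the constraints on the parts of any $k$-coalition partition $\copart$ of $K_n$:
\begin{enumerate}
\item A part with more than $k$ vertices is $k$-dominating, so it cannot lie in a $k$-coalition. It is also not a $k$-dominating set with exactly $k$ vertices, so it is forbidden.
\item A part with exactly $k$ vertices is allowed on its own.
\item A part $X$ with $|X|<k$ (call it \emph{deficient}) must have a deficient partner $X'$ with $|X|+|X'|\ge k$.
\end{enumerate}
So every part has size at most $k$, and hence $|\copart|\ge\lceil n/k\rceil$. When $k\mid n$, splitting $V(K_n)$ into $n/k$ sets of size $k$ attains this bound. That settles the first case.

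Now write $n=qk+r$ with $0<r<k$. Since $k\le n$, we have $q\ge1$. Suppose $|\copart|=q+1$, and measure the total deficiency $\sum_{X\in\copart}(k-|X|)=(q+1)k-n=k-r$.
\begin{enumerate}
\item If there were exactly one deficient part, it would have no partner, which is impossible.
\item So there are at least two deficient parts, each deficient by at least $1$. This forces $k-r\ge2$, that is, $r\ne k-1$, equivalently $k\nmid n+1$.
\end{enumerate}

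Conversely, when $r\le k-2$, take $q-1$ parts of size $k$ together with one part of size $k-1$ and one of size $r+1$. Both of the last two have size less than $k$ and their sizes sum to $k+r\ge k$, so they form a $k$-coalition. This gives $\cok(K_n)=\lceil n/k\rceil$ in the second case.

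In the remaining case $r=k-1$, the argument above shows that $q+1$ parts are impossible, so $|\copart|\ge q+2$. To attain $q+2$, take $q-1$ parts of size $k$ and three parts of sizes $k-1$, $k-1$ and $1$. These total $(q-1)k+2k-1=n$. The singleton forms a $k$-coalition with either part of size $k-1$, and the two parts of size $k-1$ form one with each other since $k\ge2$. This gives $\lceil n/k\rceil+1$.

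The step needing the most care is the deficiency counting that rules out $q+1$ parts when $k\mid n+1$: it must exclude exactly one deficient part, and must use that every part has size at most $k$. The constructions are routine once the characterization of $k$-dominating sets is in place.
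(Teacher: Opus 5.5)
Your proof is correct and follows the paper's route. Every part has at most $k$ vertices, which gives $\cok(K_n)\ge\lceil n/k\rceil$, and your constructions are the same ones the paper uses. Your total-deficiency count $(q+1)k-n=k-r$ is a cleaner justification of the paper's bare assertion that, when $k\mid n+1$, any partition into $\lceil n/k\rceil$ parts has exactly one part of size $k-1$, and that part has no $k$-coalition partner. The case with no deficient part, which you skip, is excluded because the total deficiency $k-r$ is positive.
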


\setcounter{tbox}{0}
\begin{proof}
Let $\copart = \{X_1, \dots, X_{\cok(G)}\}$ be a minimum-size $k$-coalition partition of $K_n$. Label the vertices of $K_n$ as $\{v_1, \dots, v_n\}$. For every $X_i \in \copart$, we have $|X_i| \leq k$ as otherwise $X_i$ would be a $k$-dominating set with strictly greater than $k$ vertices. By the pigeonhole principle, $|\copart| \geq \lceil \frac{n}{k} \rceil$.
There are three cases:

\sta{\label{Case: k divides n} If $k \mid n$, then $\cok(K_n) = \frac{n}{k}$.}

If $k \mid n$, then $|\copart| \geq \frac{n}{k}$. We obtain the upper bound from the $k$-coalition partition consisting of $\frac{n}{k}$ disjoint $k$-dominating sets, each with exactly $k$ vertices. This proves \eqref{Case: k divides n}. 

\sta{\label{Case: k does not divide n or n+1} If $k \nmid n$ and $k\nmid n+1$, then $\cok(K_n) = \left\lceil\frac{n}{k}\right\rceil$. }

Since we showed $\cok(K_n) \geq \left\lceil\frac{n}{k}\right\rceil$, it suffices to exhibit a $k$-coalition partition with exactly $\left\lceil\frac{n}{k}\right\rceil$ sets. Let $m = \left\lfloor \frac{n}{k} \right\rfloor$ and let $D_i = \{v_{ik+1},\dots,v_{(i+1)k} \}$. Then $D_i$ is a $k$-dominating set with exactly $k$ vertices and the following is a $k$-coalition partition: 
\begin{equation} \label{eq: Kn min partition}
    \copart = \left\{D_0, \dots, D_{m - 2}\right\} \cup \left\{\{v_{(m-1)k + 1}, \dots ,v_{mk -1}\} \right\} \cup \left\{ \{v_{mk}, \dots, v_n \} \right\}.
\end{equation}
Since $k \nmid n+1$, it must be that $ mk < n < (m+1)k-1$. Therefore, the latter two sets both have strictly fewer than $k$ vertices, while their union has at least $k$ vertices, and so these two sets form a $k$-coalition. This proves \eqref{Case: k does not divide n or n+1}.

\sta{\label{Case: k divides n+1} If $k \nmid n$ and $k\mid n+1$, then $\cok(K_n) = \left\lceil\frac{n}{k}\right\rceil +1 $. }

In this case, the partition in \eqref{eq: Kn min partition} is not a $k$-coalition partition because $k \mid n+1$ implies that $n = (m+1)k - 1 $, and so the set $\left\{ v_{mk}, \dots, v_n \right\}$ is a $k$-dominating set with $k$ vertices. Consequently, the set $\left\{v_{(m-1)k + 1}, \dots ,v_{mk -1} \right\}$ does not have a $k$-coalition partner. In fact, every partition of $V(K_n)$ into $\left\lceil \frac{n}{k} \right\rceil$ sets (where each set has no more than $k$ vertices) has exactly one set with $k-1$ vertices and no $k$-coalition partner. Consequently, $\cok(G) > \left\lceil \frac{n}{k} \right\rceil$. However, if we define $D_i$ as above and set
\begin{equation*}
     \copart = \left\{D_0, \dots, D_{m - 2}\right\} \cup \left\{ \{v_{(m-1)k + 1}, \dots, v_{mk -1}\} \right\} \cup \left\{ \{ v_{mk}, \dots, v_{n-1} \}\right\} \cup \{v_n\},
\end{equation*}
then $\copart$ is a $k$-coalition partition of $G$ into $\left\lceil \frac{n}{k} \right\rceil + 1$ sets, which proves \eqref{Case: k divides n+1}.
See \Cref{fig:K_n Lower Bound Examples} for an example of this construction.
\end{proof}

\begin{figure}[htbp]
    \centering
    \scalebox{0.9}{
    \begin{tikzpicture}[scale=2,
  vertex/.style={circle, draw, fill=white, inner sep=1.5pt}
]

\coordinate (v1) at (0,1);
\coordinate (v2) at (0.7,0.7);
\coordinate (v3) at (1,0);
\coordinate (v4) at (0.7,-0.7);
\coordinate (v5) at (0,-1);
\coordinate (v6) at (-0.7,-0.7);
\coordinate (v7) at (-1,0);
\coordinate (v8) at (-0.7,0.7);

\node[vertex, label=above:{$v_1$}] at (v1) {};
\node[vertex, label=above right:{$v_2$}] at (v2) {};
\node[vertex, label=right:{$v_3$}] at (v3) {};
\node[vertex, label=below right:{$v_4$}] at (v4) {};
\node[vertex, label=below:{$v_5$}] at (v5) {};
\node[vertex, label=below left:{$v_6$}] at (v6) {};
\node[vertex, label=left:{$v_7$}] at (v7) {};
\node[vertex, label=above left:{$v_8$}] at (v8) {};

\begin{scope} [on background layer]
    \node[ellipse, draw=OliveGreen,  fill=LimeGreen!30, fit=(v4)(v5)] {};
    \node[ellipse, draw=CarnationPink, fill=CarnationPink!30, fit=(v6)(v7)] {};
    \node[ellipse, draw=Goldenrod, fill=Goldenrod!30, minimum width = 0.9cm, minimum height = 0.9cm, fit=(v8)] {};
 \fill[Cerulean!30, rotate around = {-40: (0.5,0.6)} ] (0.5,0.6) ellipse (0.9 and 0.4);
  \draw[Cerulean, rotate around = {-40: (0.5,0.6)} ] (0.5,0.6) ellipse (0.9 and 0.4);

\end{scope}

\begin{scope}[on background layer]
  \foreach \i in {1,...,8} {
    \foreach \j in {1,...,8} {
      \draw (v\i) -- (v\j);
    }
  }
\end{scope}
\end{tikzpicture}}
    \caption{Example of a minimum-cardinality $3$-coalition partition of $K_8$, where the sets in the partition are represented by the ellipses.}
    \label{fig:K_n Lower Bound Examples}
\end{figure}

The only case where a $k$-coalition partition can consist of exactly one set occurs when the graph $G$ has exactly $k$ vertices. The following lemma handles this edge case.
\begin{lemma} \label{Lemma: Ik includes 1 only when G has k vertices}
    If $|V(G)| = k \geq 2$, then $\kset(G) = [1, 2]$.
\end{lemma}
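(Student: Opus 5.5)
We need to show that when $|V(G)| = k$, the only sizes of $k$-coalition partitions are $1$ and $2$, and that both occur.

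\emph{Achieving size $1$ and size $2$.} The trivial partition $\{V(G)\}$ is a $k$-coalition partition. Indeed, $V(G)$ is vacuously $k$-dominating, since $V(G)\setminus V(G)=\emptyset$, and it has exactly $k$ vertices. So $1\in\kset(G)$.

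For size $2$, split $V(G)$ into any two nonempty sets $X_1,X_2$; this is possible because $k\geq 2$. Each $X_i$ misses some vertex, and that vertex has at most $k-1$ neighbours in total. Hence no proper subset of $V(G)$ can be $k$-dominating. So neither $X_1$ nor $X_2$ is $k$-dominating, while $X_1\cup X_2=V(G)$ is. Thus $\{X_1,X_2\}$ is a $k$-coalition partition and $2\in\kset(G)$.

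\emph{No size $3$ or more.} Let $\copart$ be a $k$-coalition partition with at least two sets. Every set in $\copart$ is then a proper subset of $V(G)$. By the observation above, no set is $k$-dominating, so none can be a $k$-dominating set with exactly $k$ vertices. Therefore every $X\in\copart$ has a $k$-coalition partner $X'\in\copart$ with $X\cup X'$ $k$-dominating.

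By the same degree argument, the only $k$-dominating set of $G$ is $V(G)$ itself. So $X\cup X'=V(G)$ for every such pair, which forces $\copart=\{X,X'\}$. Hence $|\copart|\leq 2$.

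Combining the two parts gives $\kset(G)=\{1,2\}=[1,2]$.

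The only substantive step is the observation that a graph on $k$ vertices has no $k$-dominating set other than $V(G)$, because every vertex has degree at most $k-1$. This is essentially the argument behind \Cref{Observation: when k > max degree}, and I expect no real obstacle beyond stating it carefully.
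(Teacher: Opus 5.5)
Your proof is correct and follows essentially the same route as the paper. The paper notes that $\Delta(G) < k$ and invokes \Cref{Observation: when k > max degree} to get $\COk(G)=2$, which gives both a two-set partition and the bound of at most two sets, and then adds the one-set partition $\{V(G)\}$; you reprove that observation inline, using the fact that $V(G)$ is the only $k$-dominating set.
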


\begin{proof}
    If $|V(G)| = k$, then $\Delta(G) < k$. By \Cref{Observation: when k > max degree} $\COk(G) = 2$ and, in particular, there exists a $k$-coalition partition with two sets. In addition, $\{V(G)\}$ is a $k$-coalition partition with one set. 
\end{proof}

The following results address the three cases: $k > \delta(G)$, $k= \delta(G)$, and $k < \delta(G)$.
When $k > \delta(G)$, there exists at least one vertex $v$ with degree less than $k$ and every $k$-dominating set of $G$ must contain $v$.
\begin{lemma} \label{Lemma: interval for k > delta (G)}
    Fix $k\geq 2$. If $k > \delta(G)$ and $|V(G)| \neq k$, then $\kset(G) = [2, \COk(G)]$.
\end{lemma}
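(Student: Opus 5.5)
The plan is to show three things: $1\notin\kset(G)$, $2\in\kset(G)$, and every size between $2$ and $\COk(G)$ occurs, by merging sets one pair at a time in a maximum partition. The structural fact driving everything is this. Since $k>\delta(G)$, the set $V^-=\{v\in V(G)\mid \deg(v)<k\}$ is nonempty, and every $k$-dominating set of $G$ contains $V^-$. So any set that misses a vertex of $V^-$ is automatically not $k$-dominating.

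\textbf{Sizes $1$ and $2$.} A one-set partition $\{V(G)\}$ is a $k$-coalition partition only if $V(G)$ is a $k$-dominating set with exactly $k$ vertices. That is excluded by $|V(G)|\neq k$, so $1\notin\kset(G)$ and $\cok(G)\ge 2$. For $2\in\kset(G)$, I would fix $u\in V^-$ and take the partition $\{\{u\},V(G)\setminus\{u\}\}$.
\begin{itemize}
\item $V(G)\setminus\{u\}$ misses $u\in V^-$, so it is not $k$-dominating.
\item $\{u\}$ is not $k$-dominating, because $k\ge 2$ and $|V(G)|\ge 2$: some vertex outside $\{u\}$ would need at least $k\ge 2$ neighbours in $\{u\}$, which is impossible.
\item The union is $V(G)$, which is trivially $k$-dominating.
\end{itemize}
Hence the two sets form a $k$-coalition and $2\in\kset(G)$.

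\textbf{Intermediate sizes.} Let $\copart$ be a maximum-cardinality $k$-coalition partition. By \Cref{Observation: No max size partition has k-dom set}, no set of $\copart$ is $k$-dominating. If $\COk(G)=2$ we are done, so assume $|\copart|\ge 3$. Some coalition pair $X_1\cup X_2$ contains $V^-$. If neither of them contained $V^-$, then every other coalition pair would also have to cover $V^-$, hence meet both $X_1$ and $X_2$; this is impossible with a third set present, as in the proof of \Cref{Prop: k > min degree G disjoint union H upper bound on CO_k}. So some set, say $X_1$, contains $V^-$. Every other set is then disjoint from $V^-$, and its only possible $k$-coalition partner is $X_1$. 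It follows that every $X_i$ with $i>1$ forms a $k$-coalition with $X_1$; in particular $X_1$ has a partner.

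\textbf{Merging step.} Given such a partition with $t\ge 3$ sets, I merge two sets $X_i,X_j$ with $i,j>1$ into $X_i\cup X_j$ and check that this is again a $k$-coalition partition with $t-1$ sets.
\begin{itemize}
\item $X_i\cup X_j$ misses $V^-\subseteq X_1$, so it is not $k$-dominating.
\item $X_1\cup X_i\cup X_j\supseteq X_1\cup X_i$, which is $k$-dominating, so the merged set still forms a $k$-coalition with $X_1$.
\item $X_1$ is unchanged and is not $k$-dominating, and all other sets keep $X_1$ as their partner.
\end{itemize}
The invariant that $X_1\supseteq V^-$ and every other set partners $X_1$ is preserved. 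Iterating from $t=\COk(G)$ down to $t=2$ therefore yields partitions of every size in $[2,\COk(G)]$.

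The only delicate step is establishing that some set contains $V^-$ once $|\copart|\ge 3$, since everything else rests on that. Since this is exactly the argument already used in \Cref{Prop: k > min degree G disjoint union H upper bound on CO_k}, I expect it to go through.
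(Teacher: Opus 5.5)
Your proof is correct and follows essentially the same route as the paper: take a maximum partition, show that when it has at least three sets some set $X_1$ contains $V^-$ (so every other set can partner only $X_1$), and then merge the remaining sets down to every smaller size. The differences are cosmetic. You merge one pair at a time, whereas the paper collapses the tail $X_n\cup\dots\cup X_{\COk(G)}$ in a single step. You also explicitly verify that $1\notin\kset(G)$ and exhibit the two-set partition $\{\{u\},V(G)\setminus\{u\}\}$, both of which the paper leaves implicit.
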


\begin{proof} \setcounter{tbox}{0}
  The strategy is to fix a maximum-size $k$-coalition partition and then carefully merge sets such that the resulting partition is still a $k$-coalition partition.
  Let $V^- = \{v \in V \mid \deg(v) <  k\}$.  Let $\copart = \{X_1, X_2, \dots, X_{\COk(G)}\}$ be a maximum-size $k$-coalition partition. Suppose there does not exist a set $X\in \copart$ such that $V^- \subseteq X$. Then $\copart = \{X_1, X_2\}$, where $X_1$ and $X_2$ are $k$-coalition partners and $V^- \subseteq X_1 \cup X_2$. In this case, $\COk(G) = 2$. 

We may now assume there exists some set $X\in\copart$ such that $V^-\subseteq X$; without loss of generality, say $V^-\subseteq X_1$.
For every $i > 1$, the set $X_i$ is not a $k$-dominating set (since it does not contain $V^-$). Moreover, $X_i$ can form a $k$-coalition with only $X_1$. So, for every $n \in [2, \COk(G)]$, let 
         \begin{equation*}
             \copart_n = \left\{X_1, X_2, \dots, X_{n-1}, X_n \cup \dots \cup X_{\COk(G)} \right\}.
         \end{equation*}
         For every $n \geq 2$, the set $X_n \cup \dots \cup X_{\COk(G)}$ is not $k$-dominating (since it does not contain $V^-$) and forms a $k$-coalition with $X_1$. Hence, $\copart_n$ is a $k$-coalition partition of $G$ with exactly $n$ sets.
\end{proof}

We next consider the case where $k = \delta(G)$. If $v$ is a vertex of minimum degree, we observe that every $k$-dominating set contains either $v$ or the entire neighborhood of $v$.

\begin{lemma} \label{Lemma: interval for k = delta (G)}
    Let $k = \delta(G)$ and $k \geq 2$. If $k = 2$ and $G = K_3$,  then $\kset(G)= \{3\}$. Otherwise, $\kset(G) = [2, \COk(G)]$.
\end{lemma}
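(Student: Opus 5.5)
The plan has two parts. First I show $2 \in \kset(G)$ by an explicit two-set partition, which also isolates $K_3$ as the exception. Then I show every size between $2$ and $\COk(G)$ is attained by merging sets of a maximum partition one step at a time, as in the proof of \Cref{Lemma: interval for k > delta (G)}.

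Fix a vertex $v$ with $\deg(v)=\delta(G)=k$. Then $|N(v)|=k$, and every $k$-dominating set contains $v$ or all of $N(v)$. Since $|V(G)|\ge k+1$, no partition has one set, so $\cok(G)\ge 2$.

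\textbf{The value $2$.} Pick $u\in N(v)$ and set $X_1=V(G)\setminus\{u,v\}$ and $X_2=\{u,v\}$.
\begin{itemize}
\item $X_1$ is not $k$-dominating, because $v$ has only $k-1$ neighbours in $X_1$.
\item $X_1\cup X_2=V(G)$ is trivially $k$-dominating.
\item If $k\ge3$, $X_2$ has two vertices and cannot $k$-dominate the nonempty set $X_1$.
\item If $k=2$, $X_2$ is $2$-dominating only if every other vertex is adjacent to both $u$ and $v$. Let $N(v)=\{u,u'\}$. If both choices $u$ and $u'$ fail, then every vertex other than $v,u,u'$ is adjacent to $v$. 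Since $\deg(v)=2$, this forces $V(G)=\{v,u,u'\}$ with all edges present, i.e. $G=K_3$.
\end{itemize}
So outside the exception, $\{X_1,X_2\}$ is a $k$-coalition partition and $2\in\kset(G)$. For $G=K_3$, $k=2$, I check directly that three singletons work. Any two-set partition has the form $\{\{a,b\},\{c\}\}$; there $\{a,b\}$ is a $2$-dominating set of size $2$, so $\{c\}$ has no partner. Hence $\kset(K_3)=\{3\}$.

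\textbf{The merging step.} Given a $k$-coalition partition $\copart$ with $m>2$ sets, I want one with $m-1$ sets. By \Cref{Observation: No max size partition has k-dom set} I may start from a maximum partition in which every set has a partner. The basic tool is: if $A,B\in\copart$, $A\cup B$ is not $k$-dominating, and no remaining set is $k$-dominating, then merging $A$ and $B$ gives a $k$-coalition partition.
\begin{itemize}
\item Any partner $C$ of $A$ or of $B$ satisfies that $C\cup A\cup B$ is $k$-dominating, and $C$ itself is not, so $C$ and $A\cup B$ are partners.
\item $A\cup B$ also gets a partner, since its old partners survive.
\end{itemize}
So I need a non-dominating union. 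Let $X_1$ be the set containing $v$. Any union avoiding $v$ and missing some vertex of $N(v)$ is not $k$-dominating. Among the $m-1\ge 2$ sets avoiding $v$, I want two whose union does not cover $N(v)$. If $|N(v)\setminus X_1|$ is large compared with how these sets split it, such a pair exists.

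\textbf{Main obstacle.} The bad case is when every pair of sets avoiding $v$ together covers $N(v)\setminus X_1$. For example, $N(v)\setminus X_1$ might lie inside just two sets $B_1,B_2$, with $B_1\cup B_2\supseteq N(v)$ and $B_1\cup B_2$ $k$-dominating. In that case I try merging $X_1$ with a set $C\notin\{B_1,B_2\}$. Then $X_1\cup C$ misses $N(v)\cap(B_1\cup B_2)$ and contains $v$, so I must check directly that it is not $k$-dominating, using that $X_1$ and $C$ were themselves non-dominating. If that fails, I fall back to merging $C$ into $B_1$; this union is non-dominating as long as $N(v)\not\subseteq B_1\cup C$ and $v\notin B_1\cup C$.

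The work I expect to be hardest is verifying that one of these merges always succeeds while keeping every set with a partner. This includes ruling out that some merged set becomes a $k$-dominating set of exactly $k$ vertices with no partner. Once the merging step is established, iterating it from $\COk(G)$ down to $2$ gives $\kset(G)=[2,\COk(G)]$.
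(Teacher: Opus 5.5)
Your two-set construction and the $K_3$ analysis are correct. Treating $K_{k+1}$ with $k\ge 3$ via $|\{u,v\}|=2<k$ is tidier than the paper's detour through $\cok(K_{k+1})$, and your merging tool is sound.

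The gap is the step the whole second half rests on. You never prove that a partition with many sets contains two sets whose union is not $k$-dominating. You describe a ``bad case'' and propose fallback merges (with $X_1$, or of $C$ into $B_1$), but you leave their success as ``the hardest part''. So the sizes strictly between $2$ and $\COk(G)$ are not actually obtained.

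Moreover, the goal of iterating ``down to $2$'' is false as stated. Take $G=K_{2,2,2}$ and $k=\delta(G)=4$. The partition into its three partite sets is a $4$-coalition partition, in fact a maximum one. In it, every union of two sets is a $4$-dominating set with exactly $4$ vertices. So any merge leaves the third set with no partner, and the step $3\to 2$ cannot be done by merging.

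The missing idea is short. Since $2\in\kset(G)$ already comes from your explicit partition, you only need merges from $m\ge 4$ sets down to $3$. When $m\ge 4$, at least three sets $A,B,C$ avoid $v$. Fix any $w\in N(v)$. By disjointness, at least two of $A,B,C$, say $B$ and $C$, do not contain $w$. Then $B\cup C$ misses both $v$ and $w$, so $v$ has at most $k-1$ neighbours in it, and it is not $k$-dominating. Your tool now merges $B$ and $C$.

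The merged set is non-dominating, so the invariant ``no set is $k$-dominating'' persists. Hence no $k$-dominating set with exactly $k$ vertices is ever created, and you can iterate down to $3$. This also shows your ``bad case'' occurs only when $m=3$, where the set $C$ in your fallback does not exist.

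With this patch, your argument is a valid and somewhat simpler alternative to the paper's. The paper instead writes down, for each $n$, an explicit partition obtained from a maximum one. It lumps together a tail of the sets meeting $N[v]$ and/or of those avoiding $N[v]$, handling $p_1\le 2$ and $p_1\ge 3$ separately.
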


\begin{proof}
Let $v \in V(G)$ be a vertex of minimum degree. If every vertex $u \in V(G) \setminus \{v\}$ is adjacent to $v$, then $|V(G)| = k+1$ and every vertex has degree $k$. Therefore, $G$ is the clique $K_{k+1}$. By \Cref{Thm: CO_k(K_n)}, $\COk(K_{k+1}) = (k+1) - k + 2 = 3$. If $k =2$, then $G = K_3$. There is no $2$-coalition partition of $K_3$ with two sets, 
since if we partition $V(K_3)$ into two sets, then one set contains one vertex, say $X_1$, and one set contains two vertices, say $X_2$. However, $X_1$ has no $2$-coalition partner because $X_2$ is a $2$-dominating set with $2$ vertices. Therefore, the only $2$-coalition partition of $K_3$ is the partition into three singleton sets, and so $I_2(K_3) = \{ 3\}$. If $k > 2$, then $k \nmid (k+1)+1$. By \Cref{Prop: Min size partition of K_n}, $\cok(K_{k+1}) = \left\lceil \frac{k+1}{k} \right\rceil = 2$. We conclude that $\kset(K_{k+1}) = [2,3]$ for all $k\geq 3$.

We may now assume that $V(G) \setminus N[v] \neq \emptyset$. We first construct a $k$-coalition partition with two sets. Pick some $u \in N(v)$ and set $\copart =  \{V(G) \setminus \{v, u\},\{v, u\} \}$. The set $V(G) \setminus \{v, u\}$ does not $k$-dominate $v$ because it neither contains $v$ nor the entire neighborhood of $v$. The set $\{v, u\}$ does not $k$-dominate the vertices in $V(G) \setminus N[v]$ since $k \geq 2$. However, the union of these sets $k$-dominates $G$. Therefore, $\copart$ is a $k$-coalition partition.

It just remains to construct $k$-coalition partitions with $n$ sets, where $n\in [3,\COk(G)]$ and $\COk(G) \geq 3$. Fix a maximum-size $k$-coalition partition $\copart$ of $G$ and define
\begin{align*}
    \pi_1 = \{ X \in \copart \mid X\cap N[v] \neq \emptyset \}; \\
    \pi_2 = \{ X \in \copart \mid X\cap N[v] = \emptyset \}.
\end{align*}
Set $p_1 = |\pi_1|$ and $p_2 = |\pi_2|$; it may be that $\pi_2 = \emptyset$. Then $\COk(G) = p_1 + p_2$. Label the sets in $\pi_1$ as $X_1, \dots, X_{p_1}$ and the sets in $\pi_2$ as $Y_1, \dots, Y_{p_2}$, such that $X_1$ is the set containing $v$. 

For each $n \in [3, p_1]$, we will construct a $k$-coalition partition with $n$ sets.
First suppose $p_1 \geq 3$.
Then $X_2$ must contain some neighbor of $v$ but not all of $N(v)$. Let $Y = Y_1 \cup Y_2 \cup \dots \cup Y_{p_2}$. Note that if $\pi_2 = \emptyset$, then $Y = \emptyset$. We verify that the following is a $k$-coalition partition with $n$ sets:
\begin{equation*}
    \{ X_1, X_2, \dots, X_{n-1}\} \cup \{X_n \cup \dots \cup X_{p_1} \cup Y \}.
\end{equation*}
Indeed, the set $X_n \cup \dots \cup X_{p_1} \cup Y $ is not a $k$-dominating set because it contains at most $k-1$ neighbors of $v$ and does not $k$-dominate $v$. Since the union of $X_{p_1}$ with any of the sets $X_3, \dots, X_{p_1 - 1}, Y$ does not form a $k$-dominating set (since it cannot $k$-dominate $v$), the $k$-coalition partner of $X_{p_1}$ in the initial partition must have been $X_1$ or $X_2$. Therefore, $X_n \cup \dots \cup X_{p_1} \cup Y $ forms a $k$-coalition with $X_1$ or $X_2$.

If $p_2 = 0$, we are done because $p_1 = \COk(G)$, where we assumed $\COk(G) \geq 3$. Suppose $\pi_2 \neq \emptyset$. Then, for every $n\in[p_1 +1 , \COk(G)]$,  the following is a $k$-coalition partition with $n$ sets:
\begin{equation*}
    \pi_1 \cup \{Y_1, \dots, Y_{n-p_1-1} \} \cup  \{ Y_{n-p_1} \cup \dots \cup  Y_{p_2} \}.
\end{equation*}
Indeed, the set $Y_{n-p_1} \cup \dots \cup  Y_{p_2}$ is not a $k$-dominating set because it does not $k$-dominate $v$ (since it contains no neighbors of $v$). However, this set forms a $k$-coalition with the $k$-coalition partner(s) of $Y_{n-p_1}, \dots, Y_{p_2}$ in $\pi_1$. Note that this construction shows that if $p_1 = 1$ then $[2,\COk(G)] \subseteq \kset(G)$ and if $p_1 = 2$ then $[3,\COk(G)] \subseteq \kset(G)$, as required.
\end{proof}

The $k < \delta(G)$ case is the hardest and the proof requires two approaches. First, we begin with a maximum-size $k$-coalition partition of $G$ and strategically merge sets one-by-one to gradually decrease the number of sets, as we did in the previous proof. Second, we start with a minimum-size $k$-coalition partition, then carefully rearrange the sets and split off singletons one-by-one to gradually increase the number of sets. 

\begin{lemma} \label{Lemma: upper interval for k < delta (G)}
    Fix $k\geq 2$. If $k < \delta(G)$, then $[\delta(G) - k +4, \COk(G)] \subseteq \kset(G)$.
\end{lemma}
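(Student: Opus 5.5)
Following the merging strategy used in the proofs of \Cref{Lemma: interval for k > delta (G)} and \Cref{Lemma: interval for k = delta (G)}, I would start from a maximum-size $k$-coalition partition $\copart$ of $G$. By \Cref{Observation: No max size partition has k-dom set}, every set of $\copart$ has a $k$-coalition partner in $\copart$ and no set is a $k$-dominating set. The aim is to show that whenever a $k$-coalition partition $\copart'$ with $|\copart'| = n > \delta(G)-k+4$ has no $k$-dominating sets, we can merge two of its sets to obtain a $k$-coalition partition with $n-1$ sets that again has no $k$-dominating sets. Iterating this from $n = \COk(G)$ downward gives every size in $[\delta(G)-k+4,\COk(G)]$. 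If $\COk(G) < \delta(G)-k+4$ the statement is vacuous, so we may assume $\COk(G)$ is large.

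For the merging step, fix a vertex $v$ of minimum degree $\delta = \delta(G)$, and let $X_v$ be the set of $\copart'$ containing $v$. A set $X$ not containing $v$ must receive at least $k$ neighbors of $v$ from $X$ together with its partner. So at most $\delta/1$ sets meet $N(v)$. By \Cref{Lemma: max number of coalition partners}, each set has at most $\Delta-k+2$ partners, which is not directly helpful. I would instead aim for a counting statement: the number of sets of $\copart'$ meeting $N[v]$ in a ``large'' way is small, so when $n \geq \delta-k+5$ there exist two sets $A,B$ such that:
\begin{enumerate}
\item[(i)] $A\cup B$ is not $k$-dominating, which is guaranteed if $A\cup B$ contains neither $v$ nor $k$ neighbors of $v$;
\item[(ii)] $A\cup B$ still has a partner, namely a partner of $A$ or of $B$;
\item[(iii)] every set whose only partners were $A$ or $B$ now partners with $A\cup B$.
\end{enumerate}
Condition (iii) is automatic, since supersets of coalition partners still dominate jointly, provided the merged set is not itself $k$-dominating; that is exactly (i). Condition (ii) also holds unless $A$ and $B$ were each other's only partners. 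So the real content is finding $A,B$ satisfying (i) that are not exclusive mutual partners.

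To find them I would use the sets avoiding $X_v$. Let $S_1,\dots,S_t$ be the sets other than $X_v$, with $a_i = |S_i\cap N(v)|$, so $\sum a_i \le \delta$. Any two sets with $a_i+a_j\le k-1$ satisfy (i). Since $n-1 \ge \delta-k+4$ sets share at most $\delta$ neighbors of $v$, a pigeonhole argument yields many sets with $a_i=0$ or with small $a_i$; indeed at least $n-1-(\delta-(k-1)) \ge 3$ sets have $a_i$ small enough that suitable pairs sum to at most $k-1$. Among three such sets, some pair is not an exclusive mutual-partner pair: if $A$ and $B$ are exclusive partners, then $C$ has a partner distinct from both, so $A\cup C$ can be merged instead, keeping $B$'s partner $A\cup C$ and $C$'s partner.

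The main obstacle is making this pigeonhole precise, including the threshold $\delta-k+4$, and checking that the merged set is not accidentally a $k$-dominating set with exactly $k$ vertices. I expect the latter to be harmless, since the merged set does not $k$-dominate $v$. I would first try to prove the clean claim that at least three sets $S_i$ satisfy $a_i\le \lfloor (k-1)/2\rfloor$, adjusting the sets by cases if the count falls short by one.
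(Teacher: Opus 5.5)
Your plan is essentially the paper's argument: repeatedly merge two sets that are not $k$-coalition partners, which preserves a $k$-coalition partition with no $k$-dominating set, and count neighbors of a minimum-degree vertex $v$ to show such a pair exists while the partition is large. This is the same count that gives the paper's bound $|\copart^\star|\le\lceil 2\delta(G)/k\rceil+2\le\delta(G)-k+4$.

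The obstacle you flag dissolves. Condition (i) already says $A$ and $B$ are not partners, so (ii) is automatic and two sets suffice; your three-set claim is not needed and can fail when $k=2$. Two such sets exist because if every pair among the $t\ge\delta(G)-k+4$ sets other than $X_v$ had $a_i+a_j\ge k$, all but one would have $a_i\ge\lceil k/2\rceil$, giving $\sum a_i\ge k+(t-2)\lceil k/2\rceil\ge\delta(G)+2>\delta(G)$.
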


\begin{proof} \setcounter{tbox}{0}
    Let $\copart$ be a maximum-size $k$-coalition partition of $G$ and let $v$ be a vertex of minimum degree. By \Cref{Observation: No max size partition has k-dom set}, $\copart$ does not contain any $k$-dominating sets. Define
    \begin{align*}
    \pi_1 = \{ X \in \copart \mid X\cap N[v] \neq \emptyset \}; \\
    \pi_2 = \{ X \in \copart \mid X\cap N[v] = \emptyset \}.
\end{align*}
As before, let $p_1 = |\pi_1|$ and $p_2 = |\pi_2|$. Label the sets of $\pi_1$  and $\pi_2$ as $X_1, \dots, X_{p_1}$ and $Y_1, \dots, Y_{p_2}$, respectively. When $\pi_2 \neq \emptyset$, for every $n \in [p_1+1, \COk(G)]$, the following is a $k$-coalition partition with $n$ sets:
\begin{equation*}
    \pi_1 \cup \{Y_1, \dots, Y_{n-p_1-1} \} \cup  \{ Y_{n-p_1} \cup \dots \cup  Y_{p_2} \}.
\end{equation*}

Let $Y = Y_1 \cup \dots \cup Y_{p_2}$. Let $\copart' = \{X_1, X_2, \dots, X_{p_1}\} \cup \{Y\}$. Note that $\copart'$ does not contain a $k$-dominating set, since $\copart$ did not contain a $k$-dominating set and the new set $Y$ does not $k$-dominate $v$.
If there exist $X, X' \in \copart'$ that are not $k$-coalition partners, then replace $\copart'$ with $\copart'' = (\copart' \setminus \{X, X'\} ) \cup \{X \cup X'\}$. We call this \textit{merging} two sets. 
Observe that $X \cup X'$ is not a $k$-dominating set (otherwise, $X$ and $X'$ would have been $k$-coalition partners), but $X \cup X'$ forms a $k$-coalition with the $k$-coalition partners of $X$ and $X'$. And, if some set $X''$ formed a coalition with $X$ or $X'$, it now forms a coalition with $X \cup X'$. Therefore, we conclude that $\copart''$ is a $k$-coalition partition, with one fewer set than $\copart'$. Repeat until no suitable $X$ and $X'$ remain. 

Let $\copart ^\star = \{ X_1, \dots, X_r, Y\}$ be the resulting $k$-coalition partition. Through the merging process, we have constructed a $k$-coalition partition with $n$ sets for every $ |\copart^\star| < n < |\copart'|$.
Let $X_1$ denote the set containing $v$ and let $Y$ denote the set that does not intersect $N[v]$, if such a set exists.

\sta{\label{Claim: size upper bound} It holds that $|\copart^\star| \leq \left\lceil \frac{2\delta(G)}{k} \right\rceil +2$.
}

Let $\pi_3 = \copart^\star \setminus \{X_1, Y\}$, then every set in $\pi_3$ intersects $N(v)$. Set $|\pi_3| = p_3$.
If there exist two sets $X, X' \in\pi_3$ that both contain fewer than $\frac{k}{2}$ neighbors of $v$, then their union $X \cup X'$ does not $k$-dominate $v$. Hence, $X$ and $X'$ are not $k$-coalition partners and could be merged.  However, since the merging process has terminated, there is at most one set $X \in  \pi_3$ with $|N(v) \cap X| < \frac{k}{2} $. 

Suppose, for contradiction, $|\copart^\star| \geq  \left\lceil \frac{2\delta(G)}{k} \right\rceil +3$, then $p_3 \geq \left\lceil \frac{2\delta(G)}{k} \right\rceil +1.$ 
If $k = 2$, then $p_3 > \delta(G)$, which contradicts the fact that every set in $\pi_3$ intersects $N(v)$. We may now assume $k \geq 3$. All but one set, say $X \in \pi_3$, contain at least $\left\lceil \frac{k}{2}\right\rceil$ neighbors of $v$. Note that $X$ has nonzero intersection with $N[v]$, so it follows that
\begin{align*}
    \delta(G) 
    \geq \left\lceil \frac{2\delta(G)}{k} \right\rceil \cdot  \left\lceil \frac{k}{2} \right\rceil + |X\cap N(v)|
    \geq \delta(G) +  |X\cap N(v)|> \delta(G).
\end{align*}
This is a contradiction, which proves \eqref{Claim: size upper bound}.\\

We have now constructed a $k$-coalition partition with at most $\left\lceil \frac{2\delta(G)}{k} \right\rceil + 2$ sets. It is sufficient to observe that for all $2 \leq k < \delta(G)$ we have
\begin{align*}
    \left\lceil \frac{2\delta(G)}{k} \right\rceil + 2 
    =   \left\lceil 2 + \frac{2}{k} \cdot (\delta(G)-k) \right\rceil + 2 
    \leq  \big\lceil 2+ \delta (G) -k \big\rceil + 2
    =   \delta (G) -k + 4,
\end{align*}
concluding the proof.
\end{proof}



\begin{lemma} \label{Lemma: lower interval for k < delta (G)}
     Fix $k \geq 2$. If $k < \delta(G)$, then $[\cok(G), \delta(G) - k +3] \subseteq \kset(G)$.
\end{lemma}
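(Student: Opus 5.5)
Starting from a minimum-size $k$-coalition partition $\copart_0$ of $G$ (so $|\copart_0| = \cok(G)$), I will build, for each $n \in [\cok(G), \delta(G)-k+3]$, a $k$-coalition partition with exactly $n$ sets. Each step increases the number of sets by exactly one, by splitting off a single vertex. The key fact is that when $k < \delta(G)$, every vertex has at least $k+1$ neighbours. Hence any set of size at most $k$, and in particular any singleton, is not $k$-dominating unless $|V(G)| = k$; that case is excluded here because $\delta(G) > k$ forces $|V(G)| \geq k+2$. Moreover, a $k$-dominating set $S$ loses the property on deleting a vertex $w$ only at $w$ itself, or at vertices $u \notin S$ with exactly $k$ neighbours in $S$ including $w$.

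\textbf{First step: normalize $\copart_0$.} Any $k$-dominating set with exactly $k$ vertices in $\copart_0$ may be kept as is. Every other set $X$ has a partner $X'$ with $X \cup X'$ $k$-dominating. The basic move is the following. Pick a pair $X, X'$ of $k$-coalition partners, or a $k$-dominating $k$-set $D$, and peel a vertex $w$ off into a new singleton $\{w\}$. We need $\{w\}$ to form a $k$-coalition with some set. The natural candidate is the remaining set $(X \cup X') \setminus \{w\}$ or $D \setminus \{w\}$, which must not itself be $k$-dominating, while its union with $w$ still is.

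Concretely, I will maintain the invariant that the current partition contains a \emph{minimal} $k$-dominating union $X \cup X'$ (take a minimal $k$-dominating subset and push the extra vertices elsewhere, as in the proof of \Cref{Lemma: Converting dominating sets to coalitions}). By minimality, removing any vertex $w$ from a set in that pair destroys $k$-domination of the union. So $\{w\}$ forms a $k$-coalition with the shrunken set, provided the shrunken set is nonempty and the other sets keep their partners. The sets that previously partnered $X$ or $X'$ may lose their partner; this is where care is needed.

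\textbf{Iteration and range.} To reach all sizes up to $\delta(G)-k+3$, I will use a vertex $v$ of minimum degree and imitate the lower-bound construction of \Cref{Theorem: min degree lower bound on CO_k}. That construction has one big set $B$ (roughly $V \setminus N[v]$ plus $k-1$ neighbours of $v$ in some arrangement) and the remaining $\delta(G)-k+2$ or so vertices of $N[v]$ as singletons, each partnering with $B$. The plan is then as follows.

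(i) Starting from $\copart_0$, merge sets not lying in a coalition with one another, exactly as in the merging process in the proof of \Cref{Lemma: upper interval for k < delta (G)}. This only reaches sizes down to $|\copart^\star|$; it does not address sizes between $\cok(G)$ and $|\copart^\star|$.

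(ii) So instead I will interpolate from the minimum partition upward. Repeatedly move one vertex of $N[v]$ out of its current set into a singleton, choosing it so that the singleton and its source both still have partners. When every non-singleton set other than the big one is exhausted, continue peeling neighbours of $v$ off the big set. A singleton $\{u\}$ with $u \in N(v)$ forms a $k$-coalition with a set containing all of $V \setminus N[v]$ and exactly $k-1$ other neighbours of $v$; such configurations let us count up to $\delta(G)-k+3$ sets.

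\textbf{Main obstacle.} The hard step is the transition from an arbitrary minimum partition to this structured family without skipping any count. Splitting a set may orphan its former partners, and a $k$-dominating $k$-set may not split cleanly. I would first try a "local surgery" lemma. Given any $k$-coalition partition $\copart$ with $|\copart| < \delta(G)-k+3$, there is a $k$-coalition partition with $|\copart|+1$ sets. The proof would split a largest set $X$ and reassign vertices so that $v$ remains $k$-dominated by the union of each partner pair. The pigeonhole count on $N(v)$, as in Claim \eqref{Claim: size upper bound}, guarantees a set containing at least two neighbours of $v$ available to split whenever $|\copart| \le \delta(G)-k+2$. Checking that the split pieces still pair correctly is the main technical difficulty.
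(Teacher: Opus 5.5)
Your proposal correctly identifies the target configuration: a big set containing $V(G)\setminus N[v]$ and exactly $k-1$ neighbours of a minimum-degree vertex $v$, with singletons from $N[v]$ partnering it. But the step you flag as the main obstacle is left unproved, and it is the whole content of the lemma. That step is getting from an arbitrary minimum partition to such a configuration without skipping a size. The ``local surgery lemma'' is asserted, not shown. The pigeonhole count only guarantees that some set contains two neighbours of $v$; it says nothing about whether splitting that set preserves partners.

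The invariant you propose actually works against you. Suppose $X\cup X'$ is a \emph{minimal} $k$-dominating union and you split $w$ off $X$. Then $\{w\}\cup X'$ and $(X\setminus\{w\})\cup X'$ are proper subsets of that union, and $\{w\}\cup(X\setminus\{w\})=X$. So none of the three resulting sets forms a $k$-coalition with another. Replacing $\{X,X'\}$ by $\{(X\cup X')\setminus\{w\},\{w\}\}$ instead leaves the count unchanged. Separately, a set of size exactly $k$ can be $k$-dominating even when $\delta(G)>k$ (e.g.\ any $k$ vertices of $K_n$). Only sets with at most $k-1$ vertices are automatically non-dominating.

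The missing idea is to go the opposite way: make one set \emph{maximal non}-$k$-dominating, rather than making a union minimal dominating.
\begin{enumerate}
\item Take a non-$k$-dominating set $X_1$ of a minimum partition. If every set is a $k$-dominating $k$-set, first split one into two partners; this costs one extra set, but $\cok(G)$ is already realized.
\item While some $u\notin X_1$ has $X_1\cup\{u\}$ non-$k$-dominating, move $u$ into $X_1$. No other set gains vertices, so none becomes $k$-dominating. The last vertex of an original partner of $X_1$ can never be moved. The number of sets never increases.
\item At termination, $X_1\cup\{u\}$ is $k$-dominating for every $u\notin X_1$. Hence every other non-dominating set partners $X_1$, and the result is a $k$-coalition partition with at most $\cok(G)+1$ sets.
\item Every singleton split off from a set outside $X_1$ also partners $X_1$. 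So you can peel off singletons one at a time, raising the count by exactly one each step.
\item Take $v\notin X_1$ not $k$-dominated by $X_1$. Maximality forces $|N(v)\cap X_1|=k-1$ and $V(G)\setminus N[v]\subseteq X_1$. So the all-singleton end state has $\deg(v)-k+3\geq\delta(G)-k+3$ sets.
\end{enumerate}
This is exactly the paper's argument, and it is what turns your target picture into a proof.
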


\begin{proof} \setcounter{tbox}{0}
    Fix a minimum size $k$-coalition partition $\copart = \{X_1, X_2, \dots, X_{\cok(G)}\}$. If every set in $\copart$ is a $k$-dominating set, then split $X_1$ into two non-empty sets $X_1$ and $X_{\cok(G) + 1}$ that form a $k$-coalition pair. We now may suppose that at least one set in $\copart$ is not a $k$-dominating set; without loss of generality, $X_1$ is such a set. Let $m$ be the number of sets in the smallest $k$-coalition partition with at least one $k$-coalition pair. By definition, $\cok(G) \leq m \leq \cok(G) + 1$.

    If there exists a vertex $u \in X_i$, for some $i \neq 1$, such that $\{u\} \cup X_1$ is not a $k$-dominating set, then replace $X_1$ by $X_1 \cup \{u\}$ and replace $X_i$ by $X_i \setminus\{u\}$. Denote the new $k$-coalition partition by $\copart'$. Repeat this step with $\copart'$ until no such vertex can be found. Let $\copart''$ be the resulting partition.

    \sta{\label{Claim: X'' is a k-coalition partitions} The collection $\copart''$ is a $k$-coalition partition of $G$ with at most $m$ sets.}
    At every step in the process, the number of sets in the partition either stays the same or decreases, hence $|\copart''| \leq |\copart| =m$. With the exception of $X_1$, no other set gains vertices. Therefore, if a set was not previously a $k$-dominating set with $k$ vertices, it did not become one. Second, when the process terminates, the set $\{ u\} \cup X_1$ is a $k$-dominating set for every $u\in V(G) \setminus X_1$. Therefore, for each $X_i$ where $i \neq 1$, one of the following holds: either $X_i$ is a $k$-dominating set with exactly $k$ vertices or $X_i$ forms a $k$-coalition with $X_1$. This proves \eqref{Claim: X'' is a k-coalition partitions}.
    \vspace{3mm}

    Given that $X_1$ is not a $k$-dominating set, there exists at least one vertex, say $v \in V(G) \setminus X_1$, that is not $k$-dominated by $X_1$. We make two structural observations.\\

\sta{\label{Claim: structural observations} If $v \in V(G) \setminus X_1$ is not $k$-dominated by $X_1$, then $|N(v) \cap X_1 | = k-1$ and  $V(G) \setminus N[v] \subseteq X_1$.}

Since $X_1$ does not $k$-dominate $v$, it must be that $|N(v) \cap X_1 | \leq k-1$. However, if $X_1$ has strictly fewer than $k-1$ neighbors of $v$, then for any $v' \in N(v) \cap (V(G) \setminus X_1)$, the set $\{v'\} \cup X_1$ is not a $k$-dominating set, contradicting the assumption that the process terminated. This proves the first claim.
    
To prove the second claim, assume otherwise that there exists some vertex $u \notin X_1$ and $u \in V(G) \setminus N[v] $. Then $\{u\} \cup X_1$ would not $k$-dominate $v$, contradicting the assumption that the process had terminated. This proves \eqref{Claim: structural observations}.
\vspace{3mm}

 Now, we have the $k$-coalition partition $\copart'' = \{X_1, \dots, X_m\}$, where $X_i \subseteq N[v]$ for all $i\geq 2$. We define a \textit{splitting} process analogously to the merging process. If there is a set $X \in \copart'' \setminus \{X_1\}$ where $|X| \geq 2$, then split $X$ into two sets: a singleton set $\{u\}$ and $X\setminus \{u\}$. Set $\copart''' = \big( \copart'' \setminus \{X\}  \big) \cup \{u\} \cup  X\setminus \{u\}$.

We verify that $\copart'' $ is a $k$-coalition after the splitting step. Since every vertex in $X$ forms a $k$-coalition with $X_1$, clearly both $\{u\}$ and $X\setminus \{u\}$ form a $k$-coalition with $X$. Even if $X$ is a $k$-dominating set with $k$-vertices, the set $X\setminus \{u\}$ has fewer than $k$ vertices and cannot $k$-dominate $G$. 
Repeat the splitting step until every set in the partition, except $X_1$, is a singleton set. Let $\copart^\star$ denote the resulting partition. The number of sets in $\copart^\star$ is at least 
    \begin{equation*}
        |\copart^\star| = (\deg(v) + 1) - (k-1) + 1 \geq \delta(G) -k +3.
    \end{equation*}
    In the course of this splitting process, we obtain $k$-coalition partitions of all intermediate sizes between $\cok(G)$ and $|\copart''|$. We conclude that $[\cok(G), \delta(G)-k+3] \subseteq \kset(G)$.
\end{proof}

\begin{proof} [Proof of \Cref{Theorem: I_k is an interval}]

Fix $k \geq 2$.
If the graph $G$ has exactly $k$ vertices, then by \Cref{Lemma: Ik includes 1 only when G has k vertices} $\kset(G) = [1, 2]$. Otherwise, $\cok(G) \geq 2$.
If $k \geq \delta(G)$, then by \Cref{Lemma: interval for k > delta (G)} and \Cref{Lemma: interval for k = delta (G)}, there are only two cases.
\begin{enumerate}
    \item If $k =2$ and $G = K_3$, then $I_2(K_3) = \{3\}$.
    \item Otherwise, $\kset(G) = [2, \COk(G)]$. 
\end{enumerate}
In either case, we have $\kset(G) = [\cok(G), \COk(G)]$.
If $k < \delta(G)$, then by \Cref{Lemma: upper interval for k < delta (G)} and \Cref{Lemma: lower interval for k < delta (G)}, it follows that $[\delta(G) - k + 4, \COk(G)] \subseteq \kset(G)$ and $[\cok(G), \delta(G) - k + 3] \subseteq \kset(G)$. Therefore, $\kset(G) = [\cok(G), \COk(G)]$.
\end{proof}

We conclude by providing necessary and sufficient conditions on $G$ to obtain that $\cok(G) = 2$.
\begin{theorem}
    Fix $k\geq 2$. Suppose $G$ is a non-complete graph with more than $k$ vertices. Then $\cok(G) = 2$ if and only if there exist two vertices $u$ and $v$ such that $|N[u] \cap N[v]| \leq 2k-2$.
\end{theorem}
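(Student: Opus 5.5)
The plan is to argue directly from the definition, characterizing when $V(G)$ can be split into two sets forming a valid $k$-coalition partition. First I will note that $\cok(G)\geq 2$: a one-set partition $\{V(G)\}$ requires $V(G)$ to be a $k$-dominating set with exactly $k$ vertices, which is impossible since $|V(G)|>k$. So $\cok(G)=2$ holds exactly when some partition $\{A,B\}$ is a $k$-coalition partition.

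Next I classify such two-set partitions. If $A$ is a $k$-dominating set with $k$ vertices, then $B$ cannot form a $k$-coalition (its only possible partner is $A$, which is $k$-dominating). Hence $B$ must also be a $k$-dominating set with $k$ vertices. So either both sets are $k$-dominating of size $k$, which forces $|V(G)|=2k$, or neither is $k$-dominating; in the latter case $A\cup B=V(G)$ is trivially $k$-dominating, so they form a $k$-coalition. In the first (degenerate) case, non-completeness gives nonadjacent $u,v$ with $|N[u]\cap N[v]|=|N(u)\cap N(v)|\leq |V(G)|-2=2k-2$, so the condition holds anyway. Thus it suffices to prove the following: $V(G)$ splits into two non-$k$-dominating sets if and only if some pair $u\neq v$ has $|N[u]\cap N[v]|\leq 2k-2$.

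For the core equivalence, $A$ fails to $k$-dominate exactly when some $u\in B$ has $|N(u)\cap A|\leq k-1$. Likewise $B$ fails exactly when some $v\in A$ has $|N(v)\cap B|\leq k-1$; in particular $u\neq v$. Fix such $u\in B$, $v\in A$, and set $C=N(u)\cap N(v)$.
\begin{itemize}
\item Vertices in $N(u)\setminus N[v]$ should go to $B$ and vertices in $N(v)\setminus N[u]$ to $A$; vertices outside $N(u)\cup N(v)$ are irrelevant.
\item If $uv\in E(G)$, then $v$ contributes one to $|N(u)\cap A|$ and $u$ contributes one to $|N(v)\cap B|$.
\item The common neighbours $C$ must be split so that each side receives at most $k-1-[uv\in E(G)]$ of them.
\end{itemize}
Such a split exists iff $|C|+2[uv\in E(G)]\leq 2k-2$. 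Since $|N[u]\cap N[v]|=|C|+2[uv\in E(G)]$ (the closed neighbourhoods share $u,v$ exactly when $u,v$ are adjacent), this is precisely the stated condition.

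The argument then runs in both directions. Given such $u,v$, I place vertices as above, splitting $C$ between the sides; $A\ni v$ and $B\ni u$ are nonempty and neither is $k$-dominating, so $\cok(G)=2$. Conversely, from a two-set partition I extract the witnesses $u,v$ and read off the inequality; in the degenerate case I use non-completeness as described. The main point needing care is the bookkeeping of the adjacency indicator, which makes closed rather than open neighbourhoods appear in the statement, together with the degenerate $|V(G)|=2k$ case.
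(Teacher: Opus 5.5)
Your proof is correct and follows the paper's route. Necessity is argued identically: the $|V(G)|=2k$ case is handled via a nonadjacent pair, and otherwise you take witnesses $u\in B$, $v\in A$ and count their common neighbours on each side. Sufficiency is likewise an explicit two-set partition that places at most $k-1$ neighbours of each witness on the opposite side. Your symmetric version of that construction splits $C=N(u)\cap N(v)$ evenly and uses $|N[u]\cap N[v]|=|C|+2[uv\in E(G)]$, which avoids the paper's separate case $\min\{\deg u,\deg v\}\le k-2$ and its asymmetric labelling of $N(v)$.
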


\begin{proof}
    First, suppose that there exists a $k$-coalition partition of $G$ with two sets, say $\copart = \{ X_1, X_2\}$. We will find two vertices $u$ and $v$ with $|N[u] \cap N[v]| \leq 2k-2$. We consider two cases: \\
    \textbf{Case 1:} If $X_1$ is a $k$-dominating set with $k$ vertices, then $X_2$ is also a $k$-dominating set with $k$ vertices. So $|V(G)| = | X_1| + |X_2| = 2k$. Given that $G$ is not a complete graph, there exist at least two nonadjacent vertices $v$ and $u$. 
    Since $u, v \notin N[u] \cap N[v]$, we have $|N[u] \cap N[v]| \leq 2k-2$.
    
    \noindent\textbf{Case 2:} Suppose that $X_1$ is not a $k$-dominating set. Then $X_2$ is not a $k$-dominating set. Therefore, there is at least one vertex $v \in X_2$ that is not $k$-dominated by $X_1$ and at least one vertex $u \in X_1$ that is not $k$-dominated by $X_2$. This implies
    \begin{equation*}
        |N[v] \cap X_1|  \leq k-1 \text{ and } |N[u] \cap X_2|\leq k-1.
    \end{equation*} Then it follows that 
        \begin{equation*}
            \big|N[u] \cap N[v]\big| = \big|N[u] \cap N[v] \cap X_1\big|+\big|N[u] \cap N[v] \cap X_2\big| \leq 2k-2.
        \end{equation*} 

Now, to prove the rest of the statement, assume that there exist two vertices $u$ and $v$ such that $|N[u] \cap N[v]| \leq 2k-2$. 
We will construct a $k$-coalition partition with exactly two sets. 

First suppose $\min(\deg(u),\deg(v)) \leq k-2$; without loss of generality $\deg(v) \leq k-2$. Then $\{N[v], G \setminus N[v]\}$ is a $k$-coalition partition of $G$. Indeed, $G\setminus N[v]$ cannot $k$-dominate $v$ and, since $|N[v]|\leq k-1$, does not $k$-dominate any vertex in $G \setminus N[v]$ (which is non-empty because $|V(G)| \geq k$). We may now assume both $\deg(u), \deg(v) \geq k-1$. Let $\deg(v) = d$ and label the vertices of $N(v)$ as $\{v_1, \dots, v_d\}$. If $u$ and $v$ are adjacent, relabel such that $v_1 = u$. If $|N[u] \cap N(v)| \leq k-1$, suppose that $N[u] \cap N(v) \subseteq \{v_1, \dots , v_{k-1}\}$. Otherwise, let $\{v_1, \dots, v_{k-1}\} \subseteq N(v) \cap N[u]$.  Let 
    \begin{align*}
        &X_1 = \left( V(G) \setminus N[v] \right) \cup \{v_1, \dots, v_{k-1}\}; \\
       & X_2 = \{v,v_k,v_{k+1},\dots, v_d\}.
    \end{align*}
Observe that $v \in X_2$ is not $k$-dominated by $X_1$, which contains at most $k-1$ neighbors of $v$ and $u \in X_1$ is not $k$-dominated by $X_2$ which contains at most $k-1$ neighbors of $u$. Therefore, $\{X_1, X_2\}$ is a $k$-coalition partition. 
\end{proof}

\begin{corollary}
Fix $k \geq 2$. If $k< \delta(G)$, then any one of the following conditions is sufficient to ensure that $G$ has $\cok(G) = 2$.
\begin{enumerate}
    \item $G$ is not connected;
    \item $\operatorname{diam}(G) \geq 3; $
    \item $\delta(G) \leq 2k - 2.$
\end{enumerate}
\end{corollary}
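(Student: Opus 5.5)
Each condition will be reduced to the preceding theorem: it suffices to find two vertices $u,v$ with $|N[u]\cap N[v]|\le 2k-2$. We also check that the theorem's hypotheses hold. $G$ has more than $k$ vertices, since $k<\delta(G)$ gives $|V(G)|\ge \delta(G)+1>k+1$. Each of conditions (1) and (2) forces $G$ to be non-complete. For (3) we argue separately: if $G$ were complete, then $G=K_n$ with $n-1=\delta(G)\le 2k-2$, and we will handle this case directly or observe that the witness pair still works.

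\textbf{Case (1).} Take $u$ and $v$ in different components. Then $N[u]\cap N[v]=\emptyset$, so $|N[u]\cap N[v]|=0\le 2k-2$.

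\textbf{Case (2).} Take $u,v$ with $d(u,v)\ge 3$. If some vertex $w$ lay in $N[u]\cap N[v]$, we would get $d(u,v)\le 2$. Hence the intersection is empty and the bound holds trivially.

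\textbf{Case (3).} Let $v$ have degree $\delta(G)\le 2k-2$ and take any $u\ne v$. Then $|N[u]\cap N[v]|\le |N[v]|=\delta(G)+1\le 2k-1$, which is one too many. This off-by-one is the main obstacle.

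To remove it, split on whether $u$ is adjacent to $v$. If $G$ is non-complete, we choose $u\notin N[v]$ when such a vertex exists; then $v\notin N[u]$, so $N[u]\cap N[v]\subseteq N(v)$, which has size $\delta(G)\le 2k-2$. If $N[v]=V(G)$, then $|V(G)|=\delta(G)+1$, so $v$ is adjacent to every other vertex. Since $v$ has minimum degree, every vertex then has degree at least $|V(G)|-1$, and $G=K_{\delta(G)+1}$ is complete. In that complete case the theorem does not apply, so we fall back on \Cref{Prop: Min size partition of K_n}. Here $n=\delta(G)+1$ satisfies $k+2\le n\le 2k-1$, so $k\nmid n$ and $\lceil n/k\rceil=2$. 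The value $\cok=2$ then requires $k\nmid n+1$, i.e.\ $n\ne 2k-1$. That is exactly the boundary case $\delta(G)=2k-2$, where $\cok$ would be $3$. So in the complete case the corollary needs $\delta(G)\le 2k-3$, or else the corollary implicitly assumes $G$ is non-complete as in the theorem. I would state this caveat and present the non-complete argument as the main proof.
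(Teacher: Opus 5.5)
Your proof is correct and takes the route the paper intends: the corollary is stated without proof as a consequence of the preceding theorem, and you use the right witness pairs — vertices in different components, vertices at distance at least $3$, and for (3) a minimum-degree vertex $v$ with some $u\notin N[v]$, giving $|N[u]\cap N[v]|\le |N(v)|=\delta(G)\le 2k-2$. Your caveat is also genuine: for every $k\ge 3$, the complete graph $K_{2k-1}$ has $\delta(K_{2k-1})=2k-2$, which exceeds $k$ and is at most $2k-2$, yet $\cok(K_{2k-1})=3$ by \Cref{Prop: Min size partition of K_n}. For example, with $k=3$ neither the $(1,4)$ nor the $(2,3)$ split of $K_5$ is a $3$-coalition partition. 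So item (3) as stated needs the theorem's non-completeness hypothesis, while complete graphs with $\delta(G)\le 2k-3$ are fine, as you showed.
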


\section{$k$-coalition graphs} \label{sect: k-coalition partition graphs}

In this section, we study \textit{$k$-coalition graphs} as proposed by Jafari, Alikhani, and Bakhshesh \cite{JAB25}, which generalize the coalition graphs initially proposed in \cite{HHHMM20} and further studied in \cite{HHHMM23CCO,HHHMM23DMGT,HHHMM23OM}.
We provide a structural characterization of the $k$-coalition graph when $k \leq \delta(G)$ and we show that every graph admits a $k$-coalition partition such that the resulting $k$-coalition graph is a complete graph. The main theorem of this section establishes that every graph is the $k$-coalition graph of some other graph. The result was shown for $k=1$ by Haynes, Hedetniemi, Hedetniemi, McRae, and Mohan in \cite{HHHMM23CCO}. Their proof proceeds by explicit construction, and we generalize it for all $k$.

\begin{definition}
    Let $G$ be a graph. Let $\copart = \{X_1,\dots, X_{|\copart|}\}$ be a $k$-coalition partition of $G$. The \textit{$k$-coalition graph}, denoted by $\kcograph(\copart, G)$, is the graph with vertex set $\{x_1, \dots, x_{|\copart|}\}$, where two vertices $x_i$ and $x_j$ are adjacent if and only if $X_i$ and $X_j$ form a $k$-coalition. By definition, if $X_i \in \copart$ is a $k$-dominating set, then $x_i$ is an isolated vertex in $\kcograph(\copart, G)$. See \Cref{fig: k-coalition partition example} for an example.
\end{definition}

    \begin{figure} [htb]
    \centering
    \begin{tikzpicture}[
  every node/.style={circle, draw, fill=white, inner sep=1.5pt}, 
  scale=1
]

\node (A1) at (1,0) {};
\node (A2) at (2,0) {};
\node (A3) at (3,0) {};
\node (A4) at (4,0) {};

\node (B1) at (1, 2) {};
\node (B2) at (2, 2) {};
\node (B3) at (3, 2) {};
\node (B4) at (4, 2) {};

\foreach \i in {A1,A2,A3, A4}
  \foreach \j in {B1,B2,B3,B4}
    \draw(\i) -- (\j);

\begin{scope} [on background layer]
    \node[ellipse, draw=Cerulean, fill=Cerulean!30, minimum width=0.5cm, minimum height=0.5cm, fit=(A2)] {};
    \node[ellipse, draw=Cerulean, fill=Cerulean!30, minimum width=0.5cm, minimum height=0.5cm, fit=(B1)(B2)] {}; 
    \node[ellipse, draw=CarnationPink, fill=CarnationPink!30, minimum width=0.5cm, minimum height=0.5cm, fit=(A1)] {};
    
     \node[ellipse, draw=Goldenrod, fill=Goldenrod!30, minimum width=0.5cm, minimum height=0.5cm, fit=(B4)] {};
     \node[ellipse, draw=OliveGreen,  fill=LimeGreen!30, minimum width=0.5cm, minimum height=0.5cm, fit=(A3)(A4)] {};
    \node[ellipse, draw=OliveGreen,  fill=LimeGreen!30, minimum width=0.5cm, minimum height=0.5cm, fit=(B3)] {};
\end{scope}

\draw[->](4.5,1)   -- (5.5,1);

\node (C1) at (6,1) {};
\node (C2) at (7,1) {};
\node (C3) at (8,1) {};
\node (C4) at (9,1) {};

\begin{scope} [on background layer]
    \node[ellipse, draw=CarnationPink, fill=CarnationPink!30, minimum width=0.5cm, minimum height=0.5cm, fit=(C1)] {};
    \node[ellipse, draw=Cerulean, fill=Cerulean!30, minimum width=0.5cm, minimum height=0.5cm, fit=(C2)] {};
    \node[ellipse, draw=OliveGreen,  fill=LimeGreen!30, minimum width=0.5cm, minimum height=0.5cm, fit=(C3)] {};
     \node[ellipse, draw=Goldenrod, fill=Goldenrod!30, minimum width=0.5cm, minimum height=0.5cm, fit=(C4)] {};
\end{scope}

\draw (C1) -- (C2);
\draw (C2) -- (C3);
\draw (C3) -- (C4);

\end{tikzpicture}
\caption{ (Left) The complete bipartite graph $K_{4,4}$ with the $2$-coalition partition $\copart = \{ \text{blue, green, pink, yellow}\}$. (Right) The corresponding $k$-coalition graph $\kcograph(\copart,K_{4,4})$.}
    \label{fig: k-coalition partition example}
\end{figure}

\begin{proposition}
    Let $G$ be a graph and fix $k > \delta(G)$. If $\copart$ is a $k$-coalition partition of $G$ with $n$ sets, then $\kcograph(\copart, G)$ is the star $S_{n-1}$.
\end{proposition}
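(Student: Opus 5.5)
The approach is to exploit the low-degree vertices. Let $V^- = \{v \in V(G) \mid \deg(v) < k\}$, which is nonempty since $k > \delta(G)$. A vertex of degree less than $k$ cannot be $k$-dominated by any set that omits it. Hence every $k$-dominating set of $G$ contains $V^-$, and in particular the union of any $k$-coalition pair contains $V^-$. The whole argument rests on this one fact. I interpret the star $S_{n-1}$ as $K_{n-1,1}$, with $S_0$ a single vertex and $S_1$ a single edge. I then split into two cases according to whether some set of $\copart$ contains all of $V^-$.

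\textbf{Case 1: some set, say $X_1 \in \copart$, contains $V^-$.} Every other set $X_i$ misses $V^-$, since the sets are disjoint.
\begin{enumerate}
\item No $X_i$ with $i>1$ is $k$-dominating. By the definition of a $k$-coalition partition, each such $X_i$ therefore has a $k$-coalition partner.
\item If $i,j > 1$, then $X_i \cup X_j$ is disjoint from $X_1 \supseteq V^-$, so it is not $k$-dominating. Hence $X_i$ and $X_j$ are never partners.
\item Consequently the partner of each $X_i$, $i>1$, is $X_1$, so $x_1x_i$ is an edge of $\kcograph(\copart,G)$ for every $i>1$, and there are no other edges.
\end{enumerate}
This is exactly a star centred at $x_1$ with $n-1$ leaves.

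One point needs checking: if $X_1$ were itself a $k$-dominating set with exactly $k$ vertices, then $x_1$ would have to be isolated. In that situation no $X_i$ with $i>1$ could have a partner, by steps 1 and 2. So this can only happen when $n=1$, and then the graph is a single vertex, which is the star $S_0$. This case arises when $|V(G)|=k$.

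\textbf{Case 2: no set of $\copart$ contains $V^-$.} Then no set is $k$-dominating, so every set has a partner. Fix a partner pair $X_1, X_2$. Their union contains $V^-$ but neither alone does, so both $X_1$ and $X_2$ meet $V^-$.

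Suppose a third set $X_3$ existed. It would need a partner $X_j$ with $X_3 \cup X_j \supseteq V^-$. Since $X_3$ is disjoint from both $X_1$ and $X_2$, the set $X_j$ would have to contain the vertices of $V^-$ lying in $X_1$ and also those lying in $X_2$. A single set of the partition cannot meet both $X_1$ and $X_2$, so this is impossible. Hence $n=2$, and the $k$-coalition graph is a single edge, namely $S_1$.

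The only delicate step is the degenerate subcase of Case 1, where the centre set is a $k$-dominating set with exactly $k$ vertices; it is resolved by the counting in steps 1 and 2. The remaining steps follow directly from the observation that every $k$-coalition union must contain $V^-$.
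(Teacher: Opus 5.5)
Your proof is correct and follows the paper's argument: you split on whether some set of $\copart$ contains $V^- = \{v \mid \deg(v) < k\}$, obtaining $n=2$ and $S_1$ when none does, and otherwise a star $S_{n-1}$ centred at the set containing $V^-$. Your version is slightly more careful than the paper's in two places: you justify why a third set is impossible in Case 2, and you explicitly rule out (except when $n=1$ and $|V(G)|=k$) the degenerate case where the central set is a $k$-dominating set with exactly $k$ vertices, which the paper passes over silently.
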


\begin{proof}
    Let $\copart = \{X_1, \dots, X_{n}\}$ be a $k$-coalition partition of $G$ and let $V^- = \{ v \in V(G) \mid \deg(v) < k \}$. If $D$ is a $k$-dominating set of $G$, then $V^- \subseteq D$. 
    Suppose that there is no set $X$ in $\copart$ such that $V^- \subseteq X$. Then  $\copart = \{ X_1, X_2\}$, where $V^- \subseteq X_1 \cup X_2$. Since $X_1$ and $X_2$ are $k$-coalition partners, $\kcograph(\copart, G) = S_1$. We may now suppose that (relabeling if necessary) $V^- \subseteq X_1$. Then, for every $i > 1$, the set $X_i$ forms a $k$-coalition with $X_1$ and does not form a $k$-coalition with any set $X_j$ where $j \neq 1$. Suppose $x_i$ is the vertex in $\kcograph(\copart,G)$ corresponding to the set $X_i$. Then two vertices $x_i$ and $x_j$ with $i < j$ are adjacent if and only if $i=1$. Thus, $\kcograph(\copart, G) = S_{n-1}$. 
\end{proof}

\begin{definition} \label{Def: Modified Bistar}
    A \defn{modified bistar} is a graph with vertex set $\{v_1, v_2, \dots, v_{|V(G)|}\}$ where $v_1$ and $v_2$ are \textit{centers} and every vertex $v_i$, for $i \geq 3$, is adjacent to $v_1$, $v_2$, or both. Moreover, $v_i$ is not adjacent to $v_j$ for any $j \geq 3$. This graph is not necessarily connected. See \Cref{fig: Modified Bistar Example} for an example. 
\end{definition}

\begin{proposition} \label{Prop: kcograph k = delta(G)}
    Suppose $k = \delta(G)$ and $\copart$ is a $k$-coalition partition of $G$ with $n \geq 2$ sets. Then the $k$-coalition graph $\kcograph(\copart, G)$ has one of the following two structures: 
    \begin{enumerate}
        \item Star $S_{n-1}$;
        \item Modified bistar.
    \end{enumerate}
\end{proposition}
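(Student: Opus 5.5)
Fix a vertex $v$ of minimum degree $\delta(G)=k$. The key fact, already used in the proof of \Cref{Lemma: interval for k = delta (G)}, is that every $k$-dominating set of $G$ contains either $v$ or all of $N(v)$, since $|N(v)|=k$. Let $X_1\in\copart$ be the set containing $v$. The plan is to show that every edge of $\kcograph(\copart,G)$ has an endpoint in a set of at most two "center" vertices. These centers will be $x_1$ together with at most one further vertex $x_2$, and every other $x_i$ must be adjacent only to $x_1$ or $x_2$. This is exactly the modified bistar structure of \Cref{Def: Modified Bistar}, and it degenerates to the star $S_{n-1}$ when only one center carries edges.

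The first step is to classify coalition pairs. Take a $k$-coalition pair $X_i,X_j$ with $i,j\neq 1$. Neither set contains $v$, so $X_i\cup X_j\supseteq N(v)$. Now let $A=\{X\in\copart\setminus\{X_1\} : X\cap N(v)\neq\emptyset\}$. Every set outside $X_1$ that lies in a coalition pair not involving $X_1$ must belong to $A$, unless its partner already contains all of $N(v)$. In that case the partner is alone in covering $N(v)$, and I will treat it as a second center.

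The second step is the counting argument. Suppose two disjoint pairs $\{X_a,X_b\}$ and $\{X_c,X_d\}$, all different from $X_1$, are both $k$-coalitions. Then $N(v)\subseteq X_a\cup X_b$ and $N(v)\subseteq X_c\cup X_d$. Since the four sets are disjoint, $N(v)\cap(X_c\cup X_d)\subseteq N(v)\cap(X_a\cup X_b)$ forces $N(v)=\emptyset$, contradicting $k\geq 2$. So any two edges of the coalition graph avoiding $x_1$ share a vertex. Hence those edges form a star or a triangle. A triangle $X_a,X_b,X_c$ would need $N(v)\subseteq X_a\cup X_b$, $N(v)\subseteq X_a\cup X_c$ and $N(v)\subseteq X_b\cup X_c$. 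By disjointness this means each of the three sets contains $N(v)$ on its own, which again forces $N(v)=\emptyset$. So the edges avoiding $x_1$ form a star centered at some $x_2$.

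The third step assembles the structure. Every edge either contains $x_1$ or contains $x_2$, so no edge joins two vertices of $\{x_3,\dots,x_n\}$. An isolated vertex (a $k$-dominating set with exactly $k$ vertices) needs separate handling. The definition requires each $x_i$ with $i\ge 3$ to be adjacent to a center, so I will argue that such a set must be one of the two centers or else cannot occur. A $k$-dominating set of size $k$ not containing $v$ must equal $N(v)$. It is then disjoint from any other set, and every other non-dominating set must reach $N(v)$ or $v$, which makes it a center candidate.

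I expect this treatment of $k$-dominating singleton vertices, together with the degenerate cases $n=2$ and $k=2$, to be the main obstacle. If no edge avoids $x_1$, the graph is the star $S_{n-1}$; otherwise it is a modified bistar with centers $x_1,x_2$.
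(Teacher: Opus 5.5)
Your proposal is correct in substance and takes a somewhat different route from the paper. The paper splits on whether $X_1$ (the part containing $v$) is itself a $k$-dominating set with $k$ vertices. In each case it names the second center directly: it takes $X_2$ to be a part other than $X_1$ that meets $N(v)$. Any coalition pair avoiding $X_1$ must cover $N(v)$, and $X_2$ owns part of $N(v)$, so every such pair must contain $X_2$. You instead leave the hub unnamed. You show that coalition edges avoiding $x_1$ are pairwise intersecting and triangle-free, since two disjoint pairs, or three sets pairwise covering $N(v)\neq\emptyset$, are impossible; hence those edges form a star. This is a clean structural argument and makes the case where $N(v)$ is split between two parts transparent. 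The paper's version is shorter because the hub is visible as soon as you ask which parts meet $N(v)$, and its case split deals with isolated vertices inline.

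The step you should finish is the isolated-vertex paragraph, which currently stops at ``center candidate.'' State explicitly the following. If a part $X_j\neq X_1$ is isolated, then $X_j=N(v)$. Any coalition pair avoiding $X_1$ would then have to contain all of $N(v)=X_j$, yet $X_j$ has no partners. So the star from your second step is empty, and $x_j$ can occupy the second-center slot; $x_1$ is always a center. If $X_1$ is also isolated, the graph has no edges at all. Every remaining part would then have to be isolated, hence equal to $N(v)$, which forces $n=2$ and $\kcograph(\copart,G)=\overline{K_2}$, a disconnected modified bistar. Accordingly, your closing sentence ``if no edge avoids $x_1$, the graph is $S_{n-1}$'' holds only when no part equals $N(v)$. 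Incidentally, the paper's Case~1 calls the situation $\copart=\{X_1,N(v)\}$ the star $S_1$. It is actually $\overline{K_2}$, since two $k$-dominating sets are not coalition partners, but this does not affect the proposition.
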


\begin{proof} \setcounter{tbox}{0}
    Let $\copart$ be a $k$-coalition partition with sets $X_1,  \dots, X_n$ and let $x_i$ be the vertex in $\kcograph(\copart, G)$ corresponding to $X_i$. Let $v \in V(G)$ be a vertex of minimum degree. Relabeling if necessary, suppose that $v \in X_1$.
   Since $v$ has degree $k$, any $k$-dominating set must either contain $v$ or contain the entire neighborhood $N(v)$. 

\sta{\label{kCG Case 1} If $X_1$ is a $k$-dominating set with exactly $k$ vertices, then $\kcograph(\copart, G)$ is a modified bistar. }
   
Since $|V(G)| \geq |N[v]| = k+1 > |X_1|$, there are is at least one other set in $\copart$. Since the union of the remaining sets must $k$-dominate $v$ (without containing $v$), we have $N(v) \cap X_1=\emptyset$. Relabeling if necessary, suppose that $X_2$ intersects $N(v)$. If $X_2$ is a $k$-dominating set, then there are no other sets in $\copart$ since the remaining sets could not $k$-dominate $v$. In this case, $\copart = \{X_1,X_2\}$ and $\kcograph(\copart, G)$ is the star $S_1$. If $X_2$ is not a $k$-dominating set, then every set $X$ in $\copart \setminus \{X_1,X_2\}$ can only form a $k$-coalition with $X_2$. Therefore, in $\kcograph(\copart, G)$, every vertex $x_i$ for $i \geq 3$ is adjacent only to $x_2$ while $x_1$ is a singleton. This proves \eqref{kCG Case 1}.

\sta{\label{kCG Case 2} If $X_1$ is not a $k$-dominating set, then $\kcograph(\copart,G)$ is a modified bistar.}
First, suppose that for each $i \geq 2$ the set $X_i$ forms a $k$-coalition with only $X_1$.  
 Then every vertex $x_i$ in $\kcograph(\copart, G)$ is only adjacent to $x_1$. Therefore, $\kcograph(\copart, G)$ is the star $S_{n-1}$.

Now, suppose that there exists at least one set that does not form a $k$-coalition with $X_1$. Then, there must be a set $X_2$ that intersects $N(v)$. 
This implies that every set $X_i \in \copart$ for $i \geq 3$ must form a $k$-coalition with $X_1$ (then the union contains $v$) or $X_2$ (then the union contains the entire neighborhood of $v$). For any $i, j \geq 3$, the sets $X_i$ and $X_j$ do not form a $k$-coalition because $X_i \cup X_j$ does not $k$-dominate $v$. If $X_2$ is a $k$-dominating set, then every other set must form a $k$-coalition with only $X_1$. Therefore, in $\kcograph(\copart,G)$, every vertex $x_i$ for $i \geq 3$ is adjacent to $x_1$ or $x_2$, and not adjacent to any $x_j$ with $j \geq 3$. We conclude that $\kcograph(\copart, G)$ is a modified bistar. 
\end{proof}

\begin{figure} [htb]
    \centering
        \begin{tikzpicture}[every node/.style={draw, circle, inner sep=1.5pt}, scale=1]

\node[label=above:$v_1$] (v1) at (0,0) {};
\node[label=above:$v_2$] (v2) at (2,0) {};

\node (A1) at (-2,-1.5) {};
\node (A2) at (-1,-1.5) {};
\node [fill = none, draw = none] at (-1.5,-1.5) {$\cdots$};

\node (B1) at (0.5,-1.5) {};
\node (B2) at (1.5,-1.5) {};
\node [fill = none, draw = none] at (1,-1.5) {$\cdots$};

\node (C1) at (3,-1.5) {};
\node (C2) at (4,-1.5) {};
\node  [fill = none, draw = none] at (3.5,-1.5) {$\cdots$};

\foreach \x in {A1, A2, B1, B2}
  \draw (v1) -- (\x);

\foreach \x in {B1, B2, C1, C2}
  \draw (v2) -- (\x);

\draw (v1) -- (v2);


\node[label=above:$v_1$] (x1) at (7,0) {};
\node[label=above:$v_2$] (x2) at (10,0) {};

\node (a1) at (6,-1.5) {};
\node (a2) at (8,-1.5) {};
\node [fill = none, draw = none] at (7,-1.5) {$\cdots$};


\node (c1) at (9,-1.5) {};
\node (c2) at (11,-1.5) {};
\node  [fill = none, draw = none] at (10,-1.5) {$\cdots$};

\foreach \y in {a1, a2}
  \draw (x1) -- (\y);

\foreach \y in {c1, c2}
  \draw (x2) -- (\y);

\end{tikzpicture}
    \caption{Two examples of a modified bistar (see \Cref{Def: Modified Bistar}).}
    \label{fig: Modified Bistar Example}
\end{figure}

\begin{proposition}
    Every graph $G$ has some $k$-coalition partition $\copart$ such that $\kcograph(\copart, G)$ is a complete graph. 
\end{proposition}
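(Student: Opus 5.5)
The plan is to take a maximum-size $k$-coalition partition and repeatedly apply the merging operation from the proof of \Cref{Lemma: upper interval for k < delta (G)} until every two sets form a $k$-coalition. At that point $\kcograph(\copart, G)$ is complete.

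\textbf{Starting partition.} Let $\copart$ be a maximum-size $k$-coalition partition of $G$. By \Cref{Observation: No max size partition has k-dom set}, no set of $\copart$ is a $k$-dominating set, and every set has a $k$-coalition partner in $\copart$. So $|\copart| \geq 2$. The one degenerate situation is $|V(G)| = k$; there the partition $\{V(G)\}$ gives $K_1$, but the argument below also works there, since $\COk(G)=2$.

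\textbf{Merging step.} Suppose $\copart$ contains two sets $X, X'$ that are not $k$-coalition partners. Replace them by $X \cup X'$. Three facts show the result is again a $k$-coalition partition with no $k$-dominating sets.
\begin{enumerate}
\item[(i)] $X \cup X'$ is not $k$-dominating. Neither $X$ nor $X'$ is $k$-dominating, so if their union were, they would be $k$-coalition partners.
\item[(ii)] Let $Z$ be a partner of $X$ or of $X'$; note $Z \notin \{X, X'\}$ because $X, X'$ are not partners. Then $Z \cup X \cup X'$ contains a $k$-dominating set, so it is $k$-dominating. Hence $Z$ and $X \cup X'$ form a $k$-coalition, and every other set keeps at least one partner.
\item[(iii)] The new set $X\cup X'$ itself has a partner, namely any former partner of $X$.
\end{enumerate}
Since no set in the new partition is $k$-dominating, the invariant "no $k$-dominating sets" is preserved, and the merging step can be repeated.

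\textbf{Termination.} Each merge decreases the number of sets by one, so the process stops. It can never reach a single set. If only two sets remain, their union is $V(G)$, which is $k$-dominating, while neither set is $k$-dominating; so those two sets are partners and cannot be merged. When the process stops, every pair of sets in the final partition $\copart^\star$ forms a $k$-coalition, so $\kcograph(\copart^\star, G)$ is a complete graph.

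The only delicate point is step (ii), checking that coalitions survive a merge. It rests on monotonicity of $k$-domination under supersets, together with the invariant that no set in the partition is $k$-dominating.
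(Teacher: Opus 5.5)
Your proposal is correct and follows essentially the same route as the paper. You start from a maximum-size $k$-coalition partition, which contains no $k$-dominating sets, and repeatedly merge non-partner pairs, using that the union of two non-partners is not $k$-dominating and that any former partner of $X$ or $X'$ becomes a partner of $X \cup X'$; your checks of monotonicity, of $Z \notin \{X, X'\}$, and of termination spell out details the paper leaves implicit.
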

    
\begin{proof}
    Let $\copart = \{X_1, \dots, X_n\}$ be a maximum-size $k$-coalition partition of $G$.  By \Cref{Observation: No max size partition has k-dom set}, there are no $k$-dominating sets in $\copart$. 
    We now perform a merging process. 
    If there exist two sets $X,X' \in \copart$ that are not $k$-coalition partners, let $\copart' = (\copart \setminus \{X,X'\} ) \cup \{X\cup X'\}$. Since neither $X$ nor $X'$ individually $k$-dominates $G$, and they do not form a $k$-coalition, it follows that $X \cup X'$ is not a $k$-dominating set. Thus, $\copart'$ contains no $k$-dominating sets.
     In fact, $\copart$ is still a $k$-coalition partition because any set that previously formed a $k$-coalition with $X$ or $X'$ will now form a $k$-coalition with $X \cup X'$. Repeat the merging step until all pairs of sets in the partition are $k$-coalition partners. Let $\pi= \{ Y_1, \dots, Y_m\}$ denote the resulting partition. Since every two sets $Y_i$ and $Y_j$ form a $k$-coalition, every two vertices $y_i$ and $y_j$ in $\kcograph(\pi,G)$ are adjacent, as required.
\end{proof}

\begin{theorem} \label{Theorem: every graph is a k-coalition partition graph}
    For every graph $H$ and every $k\geq 2$, there exists a graph $G$ and some $k$-coalition partition $\copart$ of $G$, such that $\kcograph(\copart, G) = H$.
\end{theorem}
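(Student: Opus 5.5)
We prove the statement by an explicit construction built from ``core'' vertices and ``witness'' vertices. Each witness vertex is designed to certify that a particular set, or a particular non-adjacent pair of sets, fails to $k$-dominate.

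Let $H$ have vertex set $\{x_1,\dots,x_n\}$. Split it into the isolated vertices $x_{m+1},\dots,x_n$ and the non-isolated vertices $x_1,\dots,x_m$, where $H'=H[\{x_1,\dots,x_m\}]$ has no isolated vertices. Hence $m=0$ or $m\ge 2$, and if $m\ge 3$ a non-edge $x_ix_j$ of $H'$ has both endpoints incident to edges. The plan has three steps.

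\textbf{Coalition part.} For each $p\in[1,m]$, the set $X_p$ will contain $k$ core vertices $a_{p,1},\dots,a_{p,k}$. Core vertices in different sets are all adjacent, and core vertices in the same set are non-adjacent. We then add witnesses, each a vertex with no neighbors other than the cores described below.
\begin{enumerate}
\item For each $q\in[1,m]$ we add a witness $w_q$, adjacent to all cores of every $X_p$ with $p\neq q$ and to no core of $X_q$. We place $w_q$ in some $X_r$ with $x_qx_r\in E(H')$; such an $r$ exists because $x_q$ is not isolated.
\item For each non-edge $x_ix_j$ of $H'$ we add a witness $w_{ij}$, adjacent to all cores of every $X_p$ with $p\notin\{i,j\}$ and to none of $X_i\cup X_j$. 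We place $w_{ij}$ in some $X_\ell$ with $\ell\notin\{i,j\}$; such an $\ell$ exists since $m\ge3$ whenever $H'$ has a non-edge.
\end{enumerate}

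\textbf{Verification for the non-isolated part.} We check the three required properties.
\begin{enumerate}
\item No single $X_q$ is $k$-dominating, because $w_q\notin X_q$ has no neighbor in $X_q$.
\item For a non-edge $x_ix_j$, the witness $w_{ij}\notin X_i\cup X_j$ has no neighbors in $X_i\cup X_j$, so $X_i\cup X_j$ is not $k$-dominating.
\item For an edge $x_px_q$, every vertex outside $X_p\cup X_q$ is $k$-dominated by $X_p\cup X_q$:
\begin{itemize}
\item a core vertex outside $X_p\cup X_q$ sees all $2k$ cores of $X_p\cup X_q$;
\item a witness $w_s$ sees the $k$ cores of whichever of $X_p,X_q$ has index $\neq s$;
\item a witness $w_{ij}$ satisfies $\{p,q\}\neq\{i,j\}$, so one of $p,q$ lies outside $\{i,j\}$ and $w_{ij}$ sees its $k$ cores.
\end{itemize}
\end{enumerate}
So $X_p$ and $X_q$ form a $k$-coalition exactly when $x_px_q\in E(H')$.

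\textbf{Isolated vertices.} For each $t\in[m+1,n]$, we add a set $D_t$ of $k$ new vertices, each adjacent to every other vertex of $G$. Then every vertex outside $D_t$ has all $k$ members of $D_t$ as neighbors. So $D_t$ is a $k$-dominating set with exactly $k$ vertices, which is allowed in a $k$-coalition partition and corresponds to an isolated vertex $x_t$. Because $D_t$ is itself $k$-dominating, it forms a $k$-coalition with no set, as the definition requires that neither set in a coalition be $k$-dominating.

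The main obstacle is checking that the universal vertices do not disturb the earlier verification. They lie in sets different from the $X_p$, so they never count toward the domination of vertices by an $X_p$. The only new vertices that an $X_p$ or $X_p\cup X_q$ must $k$-dominate are the vertices of the sets $D_t$. These are adjacent to all $\ge 2k$ cores of $X_p\cup X_q$, so every coalition pair still $k$-dominates them. Single sets $X_q$ and non-edge unions $X_i\cup X_j$ stay non-dominating because $w_q$ and $w_{ij}$ still certify failure.

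Finally we handle the degenerate cases. If $m=0$, then $G$ is the complete graph $K_{kn}$ partitioned into $n$ sets $D_t$, each of which is a $k$-dominating set of $k$ vertices. If $m=2$, then $H'=K_2$ and no $w_{ij}$ is needed. In every case $\copart=\{X_1,\dots,X_m,D_{m+1},\dots,D_n\}$ is a $k$-coalition partition of $G$ with $\kcograph(\copart,G)=H$.
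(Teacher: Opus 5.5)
Your construction is correct and is essentially the paper's. Both use a complete multipartite core $K_{k,\dots,k}$ with one part per non-isolated vertex of $H$, witness vertices adjacent to all cores except those of designated parts, and a universal clique split into $k$-sets for the isolated vertices. The differences are minor: you certify that each single $X_q$ is non-dominating with one witness $w_q$ per vertex, placed in a neighbor's set, where the paper uses two witnesses $z_{i,j},z_{j,i}$ per edge, and you check the interaction with the universal vertices more explicitly than the paper does.
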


\begin{proof}
  The proof follows by explicit construction.  See \Cref{fig: k-coalition partition example} for an example. The $k=1$ case is handled in \cite{HHHMM23CCO} and we generalize the construction to any $k \geq 2$.
  Write $V(H) = \{ v_1, \dots, v_n\} \cup \{u_1, \dots, u_m\}$, where every vertex $v_i$ has degree at least $1$ and every $u_i$ is an isolated vertex. Let $H'$ be the subgraph induced by $V(H) \setminus \{u_1, \dots, u_m\}$. Note that $E(H') = E(H)$. 

    We begin the construction of $G$ with the complete $n$-partite graph, where every partite set has $k$ vertices. The vertex set is initially $V(K_{k, \dots, k}) = \bigcup_{i=1}^{n}\{ x_{i,1},x_{i,2}, \dots,x_{i,k} \}$, where $x_{i,j}$ and $x_{i',j'}$ are adjacent if and only if $i \neq i'$. We call this set of vertices the \textit{base vertices}, because we continue to add vertices to $G$. Let $X_i = \{x_{i,j} \mid 1 \leq j \leq k\}$ be the set corresponding to the vertex $v_i$ in $V(H)$. The partition $\copart$ starts as the set $\{X_1, \dots, X_n\}$. We add vertices to these sets, as we continue to construct $G$ in the following three steps: 

\begin{enumerate}
    \item For every $v_iv_j\in E(H)$, add two new vertices $z_{j,i}$ and $z_{i,j}$ to $V(G)$. Add the edges from $z_{j,i}$, $z_{i,j}$ to all base vertices except those in $X_i$ and $X_j$. Assign the vertex $z_{j,i}$ to $X_j$ and the vertex $z_{i,j}$ to $X_i$. Note that any vertex $z_{a,b}$ lives in $X_a$.
    \item For every $v_iv_j\notin E(H)$, add one new vertex $w_{i,j}$ to $V(G)$. Since both $v_i$ and $v_j$ have degree at least $1$ and are not adjacent to each other, there exists at least one other vertex $v_l \in H'$, with a corresponding set $X_l$.  Add the edges from $w_{i,j}$ to all base vertices except those in $X_i$, $X_j$, and $X_l$. Assign the vertex $w_{i,j}$ to $X_l$. 
    \item  If $u_i$ is an isolated vertex in $H$, then the corresponding set in $\copart$ must be a $k$-dominating set with exactly $k$-vertices (so that it does not form a $k$-coalition with any other set). Take the complete graph $K_{mk}$, with vertex set $\{y_{i,1}, \dots, y_{i,k} \mid 1 \leq i \leq m\}$, and graph join $K_{mk}$ and $V(G)$. Then we partition the vertices into sets. For every $1\leq i \leq m$, add the set $Y_i = \{y_{i,1}, \dots, y_{i,k}\}$ to $\copart$ . 
\end{enumerate}

Set $\copart = \{ X_1, \dots, X_n\} \cup \{Y_1, \dots, Y_m\}$. Each set $X_i$ corresponds to the vertex $v_i$ and every set $Y_i$ corresponds to the vertex $u_i$.
We first verify that $\copart$ is a $k$-coalition partition of $G$. Every set $Y_i \in \copart$ is a $k$-dominating set of $G$ with $k$ vertices.
Since $\deg(v_i)\geq 1$, each vertex $v_i$ must be adjacent to at least one vertex $v_j$, where $i\neq j$. 
We check that $X_i$ and $X_j$ form a $k$-coalition. 
The sets $X_i$ and $X_j$ are not $k$-dominating sets because $X_i$ does not $k$-dominate $z_{j,i}$ and $X_j$ does not $k$-dominate $z_{i,j}$. Moreover, all the base vertices in $V(G) \setminus (X_i \cup X_j)$ are adjacent to the base vertices in $X_i$ and $X_j$. For any edge $v_kv_l \in E(H) \setminus \{v_iv_j\}$, the two added vertices $z_{k,l}$ and $z_{l,k}$ are adjacent to the base vertices in $X_i$, $X_j$, or both. Similarly, for any non-edge $v_kv_l$, the added vertex $w_{k,l}$ is adjacent to the base vertices of $X_i$ or $X_j$, or $w_{k,l}$ is contained in one of the two sets. 
Since all vertices in  $G \setminus (X_i \cup X_j)$ are adjacent to the base vertices in $X_i \cup X_j$, we have that $X_i \cup X_j$ $k$-dominates $G$. Therefore, $X_i$ and $X_j$ form a $k$-coalition.

Lastly, we observe that $\kcograph(\copart, G) = H$. Every isolated vertex $w_i \in V(G)$ corresponds to the set $Y_i \in \copart$ and $Y_i$ is a $k$-dominating set of $H$ with $k$ vertices. Therefore, $Y_i$ does not have any $k$-coalition partners in $\copart$. If $v_iv_j \in E(H)$, then, by the argument above, $X_i$ and $X_j$ form a $k$-coalition. If $v_iv_j \notin E(H)$, then there exists a vertex $w_{i,j}$ that is not adjacent to any vertex in $X_i$ or $X_j$. Therefore, $X_i \cup X_j$ does not $k$-dominate $w_{i,j}$ and are not $k$-coalition partners.
\end{proof}

    \begin{figure} [htb]
    \centering
        \begin{tikzpicture}[
  every node/.style={circle, draw, fill=white, inner sep=1.5pt}, 
  scale=1
]

\node (A1) at (1,0) {};
\node (A2) at (3,0) {};
\node (A3) at (2,2) {};
\draw  (A1) -- (A2);
\draw  (A1) -- (A3);

\begin{scope} [on background layer]
     \node[ellipse, draw=Cerulean, fill=Cerulean!30, minimum width=0.5cm, minimum height=0.5cm, fit=(A1)] {};
     \node[ellipse, draw=Goldenrod, fill=Goldenrod!30, minimum width=0.5cm, minimum height=0.5cm, fit=(A2)] {};
     \node[ellipse, draw=OliveGreen,  fill=LimeGreen!30, minimum width=0.5cm, minimum height=0.5cm, fit=(A3)] {};
\end{scope}

\node (B1) at (6.25,0.25) {};
\node (B2) at (6.5,0) {};
\node (C1) at (9.5,-0.25) {};
\node (C2) at (9.75,0) {};
\node (D1) at (7.75,2.25) {};
\node (D2) at (8.25,2.25) {};

\draw (B1) -- (D1);
\draw (B1) -- (D2);
\draw (B2) -- (D1);
\draw (B2) -- (D2);
\draw (C1) -- (B1);
\draw (C1) -- (B2);
\draw (C2) -- (B1);
\draw (C2) -- (B2);
\draw (C1) -- (D1);
\draw (C1) -- (D2);
\draw (C2) -- (D1);
\draw (C2) -- (D2);


\node [draw= Black, fill= Gray!40] (B3) at (6,0.5) {};
\node  [draw= Black, fill= Gray!40] (B4) at (6.75,-0.25) {};
\node [draw= Black, fill=Gray!40] (B5) at (7,-0.5) {};
\node [draw= Black, fill= Gray!40](C3) at (10,0.25) {};
\node  [draw=Black, fill= Gray!40]  (D3) at (8.75,2.25) {};




\draw (B3) -- (D1);
\draw(B3) -- (D2);
\draw (C3) -- (D1);
\draw (C3) -- (D2);

\draw (B4) -- (C1);
\draw (B4) -- (C2);
\draw (D3) -- (C1);
\draw (D3) -- (C2);

\begin{scope} [on background layer]
    \draw[Cerulean, rotate around ={-45:(6.5,0)}] (6.5,0) ellipse  (1.1 and 0.3);
    \fill[Cerulean!30, rotate around ={-45:(6.5,0)}] (6.5,0) ellipse (1.1 and 0.3);
    \draw[Goldenrod, rotate around ={45:(9.75,0)}] (9.75,0) ellipse  (0.9 and 0.3);
    \fill[Goldenrod!30, rotate around ={45:(9.75,0)}] (9.75,0) ellipse (0.9 and 0.3);
    \node[ellipse, draw=OliveGreen,  fill=LimeGreen!30, minimum width=0.5cm, minimum height=0.5cm, fit=(D1)(D2)(D3)] {};
    
     %
\end{scope}

\end{tikzpicture}
\caption{(Left) A graph $H$ and (Right) the graph $G$ and $k$-coalition partition $\copart$ such that $\kcograph(\copart,G) = H$ (see \Cref{Theorem: every graph is a k-coalition partition graph}). The base vertices are black with white fill and the non-base vertices are gray. }
\label{fig: theorem construction k-coalition partition graph}
\end{figure}

\section{Acknowledgments}
This research was conducted at the University of Minnesota Duluth REU with support from Jane Street Capital, NSF Grant 2409861, and donations from Ray Sidney and Eric Wepsic. I am grateful to Joe Gallian and Colin Defant for this research opportunity. I am thankful for Noah Kravitz, who suggested many interesting directions and provided helpful comments, as well as Alex Moon for his careful proofreading. I would also like thank Eliot Hodges, Mitchell Lee, Rupert Li, and Maya Sankar for their time and dedication to advising the program.

\bibliographystyle{plain} 
\bibliography{references}

@article {HHHMM20,
    AUTHOR = {Haynes, Teresa W. and Hedetniemi, Jason T. and Hedetniemi,
              Stephen T. and McRae, Alice A. and Mohan, Raghuveer},
     TITLE = {Introduction to coalitions in graphs},
   JOURNAL = {AKCE Int. J. Graphs Comb.},
  FJOURNAL = {AKCE International Journal of Graphs and Combinatorics},
    VOLUME = {17},
      YEAR = {2020},
    NUMBER = {2},
     PAGES = {653--659},
      ISSN = {0972-8600,2543-3474},
   MRCLASS = {05C69},
  MRNUMBER = {4169783},
       DOI = {10.1080/09728600.2020.1832874},
       URL = {https://doi.org/10.1080/09728600.2020.1832874},
}

@article {JAB25,
    AUTHOR = {Jafari, Abbas and Alikhani, Saeid and Bakhshesh, Davood},
     TITLE = {{$k$}-coalitions in graphs},
   JOURNAL = {Australas. J. Combin.},
  FJOURNAL = {The Australasian Journal of Combinatorics},
    VOLUME = {92},
      YEAR = {2025},
     PAGES = {194--209},
      ISSN = {1034-4942,2202-3518},
   MRCLASS = {05C69},
  MRNUMBER = {4926678},
}

@article{BHS25,
  title={On $k$-coalition in graphs: bounds and exact values},
  author={Bre{\v{s}}ar, Bo{\v{s}}tjan and Henning, Michael A. and Samadi, Babak},
  journal={arXiv preprint arXiv:2507.18306},
  year={2025}
}

@article {HHHMMBounds,
    AUTHOR = {Haynes, Teresa W. and Hedetniemi, Jason T. and Hedetniemi,
              Stephen T. and McRae, Alice A. and Mohan, Raghuveer},
     TITLE = {Upper bounds on the coalition number},
   JOURNAL = {Australas. J. Combin.},
  FJOURNAL = {The Australasian Journal of Combinatorics},
    VOLUME = {80},
      YEAR = {2021},
     PAGES = {442--453},
      ISSN = {1034-4942,2202-3518},
   MRCLASS = {05C69},
  MRNUMBER = {4284769},
MRREVIEWER = {Fatiha\ Bendali},
}

@article{TotalKCoalition,
    AUTHOR = {Bre\v{s}ar, Bo\v{s}tjan and Klav\v{z}ar, Sandi and Samadi, Babak},
     TITLE = {Total {$k$}-coalition: bounds, exact values and an application
              to double coalition},
   JOURNAL = {Discrete Math. Theor. Comput. Sci.},
  FJOURNAL = {Discrete Mathematics \& Theoretical Computer Science. DMTCS.},
    VOLUME = {27},
      YEAR = {2025},
    NUMBER = {3},
     PAGES = {Paper No. 3, 18},
      ISSN = {1365-8050},
   MRCLASS = {05C69},
  MRNUMBER = {4952284},
       DOI = {10.46298/dmtcs.15231},
       URL = {https://doi.org/10.46298/dmtcs.15231},
}

@article{EdgeCoalition25,
  title={A Note on Edge Coalitions in Graphs},
  author={Besharati, Nazli and Emadi, Azam Sadat and Masoumi, Iman},
  journal={arXiv preprint arXiv:2507.19871},
  year={2025}
}

@article{FairCoalition25,
    AUTHOR = {Alikhani,Saeid  and Jafari, Abbas and Safazadeh, Maryam},
     TITLE = {Fair coalition in graphs},
   JOURNAL = {J. Iran. Math. Soc.},
  FJOURNAL = {Journal of the Iranian Mathematical Society},
    VOLUME = {6},
      YEAR = {2025},
    NUMBER = {2},
     PAGES = {133--147},
      ISSN = {2717-1612},
   MRCLASS = {05C25 (05C60)},
  MRNUMBER = {4982885},
       DOI = {10.30504/jims.2025.538329.1273},
       URL = {https://doi.org/10.30504/jims.2025.538329.1273},
}

@article {MinMinCoalition,
    AUTHOR = {Bakhshesh, Davood and Henning, Michael A.},
     TITLE = {The minmin coalition number in graphs},
   JOURNAL = {Aequationes Math.},
  FJOURNAL = {Aequationes Mathematicae},
    VOLUME = {99},
      YEAR = {2025},
    NUMBER = {1},
     PAGES = {223--236},
      ISSN = {0001-9054,1420-8903},
   MRCLASS = {05C69 (68R10)},
  MRNUMBER = {4874962},
       DOI = {10.1007/s00010-024-01045-5},
       URL = {https://doi.org/10.1007/s00010-024-01045-5},
}

@article {HHHMM23CCO,
    AUTHOR = {Haynes, Teresa W. and Hedetniemi, Jason T. and Hedetniemi,
              Stephen T. and McRae, Alice A. and Mohan, Raghuveer},
     TITLE = {Coalition graphs},
   JOURNAL = {Commun. Comb. Optim.},
  FJOURNAL = {Communications in Combinatorics and Optimization},
    VOLUME = {8},
      YEAR = {2023},
    NUMBER = {2},
     PAGES = {423--430},
      ISSN = {2538-2128,2538-2136},
   MRCLASS = {05C69},
  MRNUMBER = {4570028},
       DOI = {10.7494/opmath.2023.43.2.173},
       URL = {https://doi.org/10.7494/opmath.2023.43.2.173},
}

@article {HHHMM23DMGT,
    AUTHOR = {Haynes, Teresa W. and Hedetniemi, Jason T. and Hedetniemi,
              Stephen T. and McRae, Alice A. and Mohan, Raghuveer},
     TITLE = {Coalition graphs of paths, cycles, and trees},
   JOURNAL = {Discuss. Math. Graph Theory},
  FJOURNAL = {Discussiones Mathematicae. Graph Theory},
    VOLUME = {43},
      YEAR = {2023},
    NUMBER = {4},
     PAGES = {931--946},
      ISSN = {1234-3099,2083-5892},
   MRCLASS = {05C69},
  MRNUMBER = {4622716},
MRREVIEWER = {Andrei\ Gagarin},
       DOI = {10.7151/dmgt.2416},
       URL = {https://doi.org/10.7151/dmgt.2416},
}

@article {HHHMM23OM,
    AUTHOR = {Haynes, Teresa W. and Hedetniemi, Jason T. and Hedetniemi,
              Stephen T. and McRae, Alice A. and Mohan, Raghuveer},
     TITLE = {Self-coalition graphs},
   JOURNAL = {Opuscula Math.},
  FJOURNAL = {Opuscula Mathematica},
    VOLUME = {43},
      YEAR = {2023},
    NUMBER = {2},
     PAGES = {173--183},
      ISSN = {1232-9274,2300-6919},
   MRCLASS = {05C69},
  MRNUMBER = {4567777},
MRREVIEWER = {Tomislav\ Do\v sli\'c},
       DOI = {10.7494/opmath.2023.43.2.173},
       URL = {https://doi.org/10.7494/opmath.2023.43.2.173},
}

@article{DoubleCoalition2,
    AUTHOR = {Henning, Michael A. and Mojdeh, Doost Ali},
     TITLE = {Double coalitions in regular graphs},
   JOURNAL = {Graphs Combin.},
  FJOURNAL = {Graphs and Combinatorics},
    VOLUME = {41},
      YEAR = {2025},
    NUMBER = {4},
     PAGES = {Paper No. 74, 19},
      ISSN = {0911-0119,1435-5914},
   MRCLASS = {05C69},
  MRNUMBER = {4915176},
       DOI = {10.1007/s00373-025-02937-2},
       URL = {https://doi.org/10.1007/s00373-025-02937-2},
}

@article {DoubleCoalition1,
    AUTHOR = {Henning, Michael A. and Mojdeh, Doost Ali},
     TITLE = {Double coalitions in graphs},
   JOURNAL = {Bull. Malays. Math. Sci. Soc.},
  FJOURNAL = {Bulletin of the Malaysian Mathematical Sciences Society},
    VOLUME = {48},
      YEAR = {2025},
    NUMBER = {2},
     PAGES = {Paper No. 51, 19},
      ISSN = {0126-6705,2180-4206},
   MRCLASS = {05C69},
  MRNUMBER = {4862406},
       DOI = {10.1007/s40840-025-01831-7},
       URL = {https://doi.org/10.1007/s40840-025-01831-7},
}

@article{ConnectedCoalition,
    AUTHOR = {Alikhani, Saeid and Bakhshesh, Davood and Golmohammadi,
              Hamidreza and Konstantinova, Elena V.},
     TITLE = {Connected coalitions in graphs},
   JOURNAL = {Discuss. Math. Graph Theory},
  FJOURNAL = {Discussiones Mathematicae. Graph Theory},
    VOLUME = {44},
      YEAR = {2024},
    NUMBER = {4},
     PAGES = {1551--1566},
      ISSN = {1234-3099,2083-5892},
   MRCLASS = {05C69 (05C85)},
  MRNUMBER = {4788114},
       DOI = {10.7151/dmgt.2509},
       URL = {https://doi.org/10.7151/dmgt.2509},
}

@article{IndependentCoalition,
    AUTHOR = {Alikhani, Saeid and Bakhshesh, Davood and Golmohammadi,
              Hamidreza and Kla\v{v}zar, Sandi},
     TITLE = {On independent coalition in graphs and independent coalition
              graphs},
   JOURNAL = {Discuss. Math. Graph Theory},
  FJOURNAL = {Discussiones Mathematicae. Graph Theory},
    VOLUME = {45},
      YEAR = {2025},
    NUMBER = {2},
     PAGES = {533--544},
      ISSN = {1234-3099,2083-5892},
   MRCLASS = {05C69},
  MRNUMBER = {4895980},
       DOI = {10.7151/dmgt.2543},
       URL = {https://doi.org/10.7151/dmgt.2543},
}

\appendix
\section{Proof of \Cref{Theorem: Multipartite k-coalition number}} \label{sect: complete multipartite graphs}

Before presenting the main argument, we prove several useful lemmas. We first establish a general lower bound on the $k$-coalition number, which is shown to be optimal in certain cases, particularly when $k$ is close to $s_1$.

\begin{lemma} \label{Lemma: R-partite CO_k lower bound}
   Fix $k \geq 2$. For every $K_{s_1, \dots, s_r}$ with $k < s_1$ it follows that $$\COk(K_{s_1, \dots, s_r}) \geq \left(\sum_{i=2}^{r}s_i \right) - k +3.$$
\end{lemma}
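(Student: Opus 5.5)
Since this is a lower bound, I will construct an explicit $k$-coalition partition of $K_{s_1,\dots,s_r}$ with $\left(\sum_{i=2}^r s_i\right)-k+3$ sets. It is modeled on the star-shaped partition used for trees and for $K_{s_1,s_2}$. One large set $A$ acts as a universal partner, and every other set is a singleton or a pair whose union with $A$ is $k$-dominating.

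\textbf{Construction.} Write $S_1=\{a_1,\dots,a_{s_1}\}$ for the smallest partite set and set $V' = S_2\cup\dots\cup S_r$, so $|V'|=\sum_{i\ge2}s_i$. I take
\[
A = (S_1\setminus\{a_1\})\cup T,
\]
where $T\subseteq S_2$ has $|T|=k-1$. This uses $s_2\ge s_1>k$.

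Next I pair $a_1$ with one vertex $b\in S_2\setminus T$ to form the set $\{a_1,b\}$. Every remaining vertex of $V'\setminus(T\cup\{b\})$ becomes a singleton.

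The number of sets is then
\[
1+1+\Bigl(\textstyle\sum_{i\ge2}s_i-(k-1)-1\Bigr)=\sum_{i\ge 2}s_i-k+3,
\]
as required.

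\textbf{Verification.} First, $A$ is not $k$-dominating. The vertex $a_1\notin A$ has only $k-1$ neighbors in $A$, namely those in $T$, because its other vertices of $A$ lie in $S_1$.

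No other set is $k$-dominating either, since each has at most $2<k+1$ vertices. Each such set would also have to be $k$-dominating with exactly $k$ vertices to be exempt, and that fails because $|V|>k+2$ and vertices in the same part are not dominated.

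It remains to check that the union of $A$ with each singleton $\{x\}$, and with $\{a_1,b\}$, is $k$-dominating. A vertex $y\in S_1$ outside the union is $a_1$ (in the singleton cases). It is dominated by $T\cup\{x\}$ when $x\notin S_1$, giving $k$ neighbors. For $\{a_1,b\}$, $a_1$ itself lies in the union.

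A vertex $y\in S_j$ with $j\ge2$ outside the union is adjacent to all of $S_1\setminus\{a_1\}$, which has $s_1-1\ge k$ vertices. Hence it is $k$-dominated by $A$ alone, using $k<s_1$.

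So the only vertex that $A$ fails to $k$-dominate is $a_1$. One subtlety must be handled: a singleton $\{x\}$ with $x\in S_2$ is not adjacent to $a_1$? It is, since $x\notin S_1$. So every singleton from $V'$ supplies the missing $k$-th neighbor of $a_1$. For $\{a_1,b\}$, the union contains $a_1$, and $b$ adds nothing needed.

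\textbf{Main obstacle.} The one real check is that $A$ fails to $k$-dominate only $a_1$. This also covers the case where $A$ omits some vertex of $S_2\setminus T$ lying in the same part as $T$. Such a vertex is not adjacent to $T$, but it is still adjacent to all of $S_1\setminus\{a_1\}$, which has at least $k$ vertices. This is exactly where the hypothesis $k<s_1$ is used, and it completes the proof.
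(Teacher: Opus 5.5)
Your partition is a valid $k$-coalition partition, but it has one set too few, so as written it does not prove the lemma. The count is miscomputed: $1+1+\bigl(\sum_{i\ge2}s_i-(k-1)-1\bigr)=\sum_{i\ge2}s_i-k+2$, not $\sum_{i\ge2}s_i-k+3$.

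Take $K_{3,3}$ with $k=2$, $S_2=\{b_1,b_2,b_3\}$, $T=\{b_1\}$ and $b=b_2$. Your partition is then $\{a_2,a_3,b_1\},\{a_1,b_2\},\{b_3\}$, which has $3$ sets, whereas the lemma claims $CO_2(K_{3,3})\geq 4$. The lost set comes from putting $a_1$ and $b$ together, when each vertex could be its own singleton.

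\textbf{Repair.} Make $\{a_1\}$ and $\{b\}$ separate singletons. Each part of the check is straightforward:
\begin{itemize}
\item No singleton is $k$-dominating, since an outside vertex has at most one neighbor in it and $k\ge 2$.
\item $A\cup\{b\}$ is $k$-dominating by your own argument, because $b$ supplies the missing $k$-th neighbor of $a_1$.
\item $A\cup\{a_1\}\supseteq S_1$ is $k$-dominating, because every vertex outside it is adjacent to all $s_1>k$ vertices of $S_1$.
\end{itemize}
The partition $\{A,\{a_1\}\}\cup\bigl\{\{x\}: x\in V'\setminus T\bigr\}$ then has $2+\sum_{i\ge2}s_i-(k-1)=\sum_{i\ge2}s_i-k+3$ sets.

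This corrected version is essentially the paper's construction. There, $X_1$ is $k$ vertices of $S_1$ plus $k-1$ vertices of $S_2$, the rest of $S_1$ forms one set $X_2$, and every other vertex is a singleton. You instead put $s_1-1$ vertices of $S_1$ into the big set rather than $k$. This works equally well, because the big set still meets $S_1$ in at least $k$ vertices, misses part of $S_1$, and meets $S_2$ in exactly $k-1$ vertices.
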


\begin{proof}
 It is sufficient to construct a $k$-coalition partition with exactly $(\sum_{i=2}^{r}s_i) - k +3$ sets. For every $1 \leq i \leq r$, label the vertices of $S_i$ as $\{v_{i,1}, v_{i,2}, \dots, v_{i,s_i}\}$. Define $X_1 \subseteq S_1 \cup S_2$ and $X_2 \subseteq S_1$ as follows:
    \begin{align*}
        &X_1 = \{v_{1, 1}, v_{1, 2}, \dots, v_{1, k}\} \cup \{v_{2, 1}, v_{2, 2}, \dots , v_{2, k-1}\}; \\
        &X_2 = \{v_{1, k+1},  \dots, v_{1, s_1}\}.
    \end{align*}
Set
    \begin{equation*}
        \copart = \left\{ X_1, X_2\right\} \cup  \left\{ \{v\} \mid v \in V(K_{s_1, \dots, s_r}) \setminus (X_1 \cup X_2) \right\}.
    \end{equation*}
   The partition $\copart$ is a $k$-coalition partition, where every set is a singleton except $X_1$ and possibly $X_2$. Indeed, no set is individually a $k$-dominating set, but $X_1$ forms a $k$-coalition with every set in $\copart$. See \Cref{fig:Multipartite Lower Bound} for an example. 
    We deduce that 
    \begin{align*}
        \COk(K_{s_1, \dots, s_r}) \geq  |V| - |X_1 \cup X_2| + 2 = \left( \sum_{i=1}^{r}s_i \right) - (s_1 + k - 1) + 2  = \left(\sum_{i=2}^{r}s_i \right) - k +3. \quad \quad \qedhere
    \end{align*}
\end{proof}

\begin{figure} [htb]
    \centering
    \scalebox{0.85}{
        \begin{tikzpicture}[
  every node/.style={circle, draw, fill=white, inner sep=1.5pt}, 
  scale=0.9
]
  \tikzstyle{c} = [circle, draw, inner sep=1.5pt, minimum size=3pt]

  \foreach \i in {1,2,3,4,5,6,7} {
    \node[c]  at (\i, 3) {};
        \node[draw=none, fill = none] (s1\i)   at (\i, 3.15) {} ;
    \node[draw=none, fill = none] at (\i, 3.3) {$v_{1,\i}$};
    }
    
  \foreach \i in {1,2,3,4,5,6,7,8, 9} {
    \node[c]  at (\i -1 , 0.3) {};
    \node[draw=none, fill = none] (s2\i)   at (\i-1, 0.45) {} ;
    \node[draw=none, fill = none]  at (\i-1, 0.6) {$v_{2,\i}$};
    }
    
\node[draw=none, fill=none] at (-1, 3.15) {\( S_1 \)};
\node[draw=none, fill=none] at (-2.5, 0.3)  {\( S_2 \)};

\begin{scope} [on background layer]
    \draw   (s13) -- (s23);
  \draw   (s13) -- (s25);
  \draw    (s15) -- (s25);
  \draw    (s15) -- (s27);  
\end{scope}

\begin{scope} [on background layer]
     \node[ellipse, draw, fill=white, fit=(s11) (s17), minimum width=3.5cm, minimum height=1.25cm] {};
  \node[ellipse, draw, fill = white, fit=(s21) (s29), minimum width=3.5cm, minimum height=1.25cm] {}; 
\end{scope}

\begin{scope}[on background layer]
\node[ellipse, draw = Cerulean, fill =   Cerulean!30, minimum width=1.4cm, minimum height=0.8cm, fit=(s11)(s14)] {};
\node[ellipse, draw = Cerulean, fill =   Cerulean!30, minimum width=1.2cm, minimum height=0.8cm, fit=(s21)(s23)] {};
\node[ellipse, draw = OliveGreen, fill = LimeGreen!30, minimum width=1.2cm, minimum height=0.8cm, fit=(s15)(s17)] {};
\end{scope}

\end{tikzpicture}
    }
    \caption{The construction given in \Cref{Lemma: R-partite CO_k lower bound} for a $4$-coalition partition of $K_{7,9}$. 
    The set $X_1$ is circled in blue, $X_2$ is circled in green, and all remaining vertices are singleton sets.}
    \label{fig:Multipartite Lower Bound}
\end{figure}

  \begin{lemma} \label{Lemma: Min dominating set in r-partite graphs}
 Fix $k \geq 2$ and $k < s_1$. Let $D$ be a $k$-dominating set of $K_{s_1, \dots, s_r}$. Suppose that $D$ intersects $m$ partite sets and $D$ does not contain any partite set (entirely). 
 Then $$|D| \geq k + \left\lceil \frac{k}{m-1}\right\rceil.$$ 
\end{lemma}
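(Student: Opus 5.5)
Write $d_i = |D \cap S_i|$ for each $i$. Let $I$ be the set of indices with $d_i > 0$, so $|I| = m$ and $|D| = \sum_{i \in I} d_i$. The plan is to pick, for each $i \in I$, a single vertex of $S_i$ outside $D$ and use it to obtain an inequality on the $d_j$. Then we sum these inequalities over $I$.

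Fix $i \in I$. Since $D$ does not contain $S_i$, there is a vertex $u_i \in S_i \setminus D$. In $K_{s_1,\dots,s_r}$ the neighbors of $u_i$ in $D$ are exactly the vertices of $D \setminus S_i$. Because $D$ is $k$-dominating, this gives
\begin{equation*}
|D| - d_i \geq k \quad \text{for every } i \in I,
\end{equation*}
that is, $d_i \leq |D| - k$ for every $i \in I$.

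Summing over the $m$ indices in $I$ gives $|D| = \sum_{i\in I} d_i \leq m(|D| - k)$, so $(m-1)|D| \geq mk$. Note that $m \geq 2$, since if $m = 1$ then $|D| - d_i = 0 < k$, a contradiction. Hence
\begin{equation*}
|D| \geq \frac{mk}{m-1} = k + \frac{k}{m-1}.
\end{equation*}
Since $|D|$ is an integer and $k$ is an integer, this yields $|D| \geq k + \left\lceil \frac{k}{m-1}\right\rceil$.

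The only subtle point is the role of the hypothesis that $D$ contains no partite set entirely. It is exactly what guarantees that each $u_i$ exists, so this step must be checked for every $i \in I$. The hypothesis $k < s_1$ is not needed here beyond the setting. There is no real obstacle: the argument is a direct counting bound, and the ceiling follows from integrality.
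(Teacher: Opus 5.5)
Your proof is correct and essentially the paper's argument: both rest on the fact that every part $S_i$ met by $D$ forces $|D \setminus S_i| \geq k$. The paper applies this once, uses pigeonhole to find a part $S_j$ with $|D \cap S_j| \geq \lceil k/(m-1)\rceil$, and then applies it again at $S_j$; you instead sum the inequality over all $m$ parts and round by integrality, which gives the same bound.
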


\begin{proof}
    Suppose that $D$ intersects the partite set $S_i$. Since $D$ does not contain $S_i$, there exist vertices in $S_i$ that are not in $D$. To $k$-dominate these vertices, it must be that $|D \cap (V \setminus S_i) | \geq k$. Therefore, it follows that $m \geq 2$. By the pigeonhole principle, there exists at least one partite set $S_j$ such that $|D \cap S_j| \geq \lceil \frac{k}{m-1}\rceil$. Since $D$ also does not contain $S_j$, to $k$-dominate the vertices in $S_j \cap (V \setminus D)$, we have $|D \cap (V \setminus S_j)| \geq k$. Thus, 
    \begin{equation*}
        |D| = |D \cap (V \setminus S_j)| + |D \cap S_j| \geq k + \left\lceil \frac{k}{m-1}\right\rceil.  \qedhere
    \end{equation*}
\end{proof}

\begin{lemma} \label{Lemma: No k-dom sets of size k}
There are no $k$-dominating sets of $K_{s_1, \dots, s_r}$ with exactly $k$ vertices. 
\end{lemma}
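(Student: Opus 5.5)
The hypothesis carried over from the surrounding appendix is $k < s_1$, so every partite set has at least $k+1$ vertices. We then take an arbitrary $k$-dominating set $D$ with $|D| = k$ and derive a contradiction by splitting into two cases.

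\textbf{Case 1: $D$ contains some partite set $S_i$ entirely.} Then $|D| \geq s_i \geq s_1 > k$, which contradicts $|D| = k$.

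\textbf{Case 2: $D$ contains no partite set entirely.} Let $m$ be the number of partite sets that $D$ intersects. By \Cref{Lemma: Min dominating set in r-partite graphs}, $|D| \geq k + \lceil k/(m-1) \rceil$. That lemma's proof already shows $m \geq 2$, so $m-1 \geq 1$ and $\lceil k/(m-1)\rceil \geq 1$. Hence $|D| \geq k+1 > k$, again a contradiction.

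If one prefers not to lean on the lemma, there is a direct argument. Since $D \neq \emptyset$, pick a partite set $S_i$ with $D \cap S_i \neq \emptyset$. Because $|S_i| > k \geq |D|$, some vertex $u \in S_i \setminus D$ exists. The neighbours of $u$ in $D$ are exactly $D \setminus S_i$, and this set has at most $|D| - 1 = k-1$ elements. So $u$ is not $k$-dominated, a contradiction.

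There is no real obstacle here. The only points to check are that $D$ is nonempty (true since $k \geq 2$) and that the hypothesis $k < s_1$ is in force throughout the appendix. I would present the direct argument, since it is one line.
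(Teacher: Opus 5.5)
Your proposal is correct. The two-case argument is exactly the paper's proof. Either $D$ contains a whole partite set, so $|D|\ge s_1>k$, or it does not, and then \Cref{Lemma: Min dominating set in r-partite graphs} with $m\ge 2$ gives $|D|\ge k+1$.

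The direct argument you say you would present is a genuinely more elementary route. It drops both the auxiliary lemma and the case split. Since $|D|=k<s_1$, $D$ cannot contain a partite set. So for any partite set $S_i$ meeting $D$, a vertex $u\in S_i\setminus D$ exists and has only $|D\setminus S_i|\le |D|-1=k-1$ neighbours in $D$.

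This is just the first observation in the proof of \Cref{Lemma: Min dominating set in r-partite graphs}: a vertex of $S_i\setminus D$ needs $k$ neighbours in $D\setminus S_i$, so $|D|\ge k+|D\cap S_i|$. You omit the pigeonhole step that produces the sharper bound $k+\lceil k/(m-1)\rceil$. The paper's citation costs nothing in context, because that sharper bound is needed anyway for the later cases of \Cref{Theorem: Multipartite k-coalition number}.

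Your version is self-contained and works verbatim for every nonempty $D$ with $|D|\le k$. It therefore directly yields the form ``every $k$-dominating set has at least $k+1$ vertices,'' which is how the lemma is later applied.

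You are also right that $k<s_1$ must be imported from the appendix context. Without it the statement fails: for example, $S_1$ is a $2$-dominating set with two vertices in $K_{2,2}$.
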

\begin{proof}
    Let $D$ be a $k$-dominating set of $K_{s_1, \dots, s_r}$.
    If $D$ contains an entire partite set $S_i$, then $|D| \geq s_i \geq s_1 > k$, as required. We may now assume there exists at least one vertex $v \in S_1 \cap (V \setminus D)$. To $k$-dominate $v$, the set $D$ must include vertices from $V (G) \setminus S_1$, implying that $D$ intersects at least two sets. By \Cref{Lemma: Min dominating set in r-partite graphs}, with $m \geq 2$, 
    \begin{equation*}
        |D| \geq k +  \left\lceil \frac{k}{m-1}\right\rceil \geq k+1.\qedhere
    \end{equation*}
\end{proof}

\begin{lemma} \label{Lemma: There exists a singleton in max size partitions}
    Suppose $r\geq 3$. Let $\copart = \{X_1, X_2, \dots,X_{\COk(K_{s_1,\dots, s_r})}\}$ be a $k$-coalition partition of $K_{s_1,\dots, s_r}$ with the maximum number of sets. Then $\copart$ contains at least one singleton set.
\end{lemma}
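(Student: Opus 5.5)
We argue by contradiction: suppose $\copart$ is a maximum-size $k$-coalition partition of $K_{s_1,\dots,s_r}$ with $r\geq 3$ and no singleton set. By \Cref{Lemma: R-partite CO_k lower bound}, $|\copart| \geq \sum_{i=2}^r s_i - k + 3$. On the other hand, if every set has at least two vertices then $|\copart| \leq \frac{1}{2}\sum_{i=1}^r s_i$. Counting alone does not give a contradiction in general (e.g.\ when all $s_i$ are close to each other and $r$ is large), so the plan is instead to show that some set of size at least $2$ can be split into two sets, one of them a singleton, without destroying the $k$-coalition property. This gives a partition with more sets and contradicts maximality.

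By \Cref{Observation: No max size partition has k-dom set}, every set in $\copart$ fails to be $k$-dominating and has a coalition partner. Take any $X\in\copart$ with $|X|\geq 2$ and a vertex $u\in X$. Consider replacing $X$ by $\{u\}$ and $X\setminus\{u\}$. Neither piece is $k$-dominating, since both are subsets of the non-dominating set $X$; $k$-domination is monotone under supersets. The issue is whether both pieces still have partners. The key observation in $K_{s_1,\dots,s_r}$ is that a vertex $w\notin D$ in $S_i$ is $k$-dominated by $D$ iff $|D\setminus S_i|\geq k$. So $D$ is $k$-dominating iff for every $i$ with $S_i\not\subseteq D$ we have $|D\setminus S_i|\geq k$. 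Thus $k$-domination of a union $A\cup B$ depends only on the \emph{profile} $(|A\cap S_i|)_i$ and which parts are fully contained.

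The main step is to choose $X$, $u$ and partners cleverly. First I would try the following. Take a partner $Y$ of $X$. Since $r\ge 3$ and $|X\cup Y|\ge k+1$ by \Cref{Lemma: No k-dom sets of size k}, I would argue that $Y\cup\{u\}$ or $Y\cup (X\setminus\{u\})$ is already close to dominating. Consider instead merging: the pieces $\{u\}$ and $X\setminus\{u\}$ together with $Y$ form the three sets. If $Y\cup(X\setminus\{u\})$ is still $k$-dominating for a suitable choice of $u$, then $X\setminus\{u\}$ partners $Y$. For $\{u\}$ I would look for a large set $Z\in\copart$ (one with many vertices spread across at least two parts) such that $Z\cup\{u\}$ is $k$-dominating. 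Failing that, I would use a rebalancing move: transfer vertices from $X$ into $Y$ (keeping $Y$ non-dominating) so that $Y$ becomes ``one vertex short'' of dominating in every part. Then every remaining vertex, as a singleton, partners $Y$, in the spirit of the process in \Cref{Lemma: lower interval for k < delta (G)}. I would also use that $r\geq 3$ guarantees that for any $i$, $V\setminus S_i$ meets at least two parts. Vertex counts outside $S_i$ then suffice to reach $k$ by adding one vertex, since $s_j>k$.

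The obstacle I expect is the case where all sets are ``balanced'' pairs, so every union of two sets is barely $k$-dominating and removing any vertex breaks every coalition. Here I would do a global recount. Each coalition pair has union of size at least $k+\lceil k/(m-1)\rceil$ by \Cref{Lemma: Min dominating set in r-partite graphs}, where $m$ is the number of parts the union meets (or the union contains a whole part of size at least $s_1>k$). Combined with every set having size at least $2$ and the lower bound of \Cref{Lemma: R-partite CO_k lower bound}, the aim is to derive $\sum_i s_i$-type inequalities contradicting $s_1\ge 2$ and $r\ge3$. I am least sure this recount closes cleanly, and I would resort to case analysis on whether some set contains a whole partite set.
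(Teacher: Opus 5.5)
\textbf{There is a genuine gap: the proposal does not yet contain a proof.} The splitting route is only sketched. You never show that some $X\in\copart$ and $u\in X$ exist for which both $\{u\}$ and $X\setminus\{u\}$ keep coalition partners. The rebalancing move (shifting vertices from $X$ into a partner $Y$) changes other sets of $\copart$: it can empty $X$, or destroy coalitions that other sets had with $X$ or $Y$. So nothing guarantees that the resulting partition has more sets than $\copart$. Your fallback ``global recount'' is left open, with a proposed case analysis that is never carried out.

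The missing idea is that counting does close. You discarded it too early because you only used the crude bound $|\copart|\le\frac12\sum_i s_i$; the refinement is to use one coalition pair. Let $n=\sum_{i=1}^r s_i$. Any $X_1\in\copart$ has a partner $X_2$, and $X_1\cup X_2$ is $k$-dominating. By \Cref{Lemma: No k-dom sets of size k}, $|X_1\cup X_2|\ge k+1$. If every other set has at least two vertices, then
$$|\copart|\le 2+\tfrac{n-k-1}{2}.$$
This is strictly smaller than the lower bound $\sum_{i=2}^r s_i-k+3$ from \Cref{Lemma: R-partite CO_k lower bound} exactly when $s_1+k-3<\sum_{i=2}^r s_i$. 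That inequality holds because $r\ge 3$ and $s_1>k$ give $\sum_{i=2}^r s_i\ge s_2+s_3\ge 2s_1\ge s_1+k+1$. This is the paper's entire proof; no splitting, rebalancing, or case analysis on partite sets is needed.

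Your diagnosis of where counting fails is also backwards. The crude bound discards roughly $k/2$ vertices, and that loss matters only when $r=3$: for example, $s_1=s_2=s_3=k+1$ with $k\ge 7$. For $r\ge 4$, even the crude bound suffices, since $\frac12\sum_i s_i\ge 2s_1>s_1+k-3$. So large $r$ with near-equal parts is the easy case, not the obstruction.
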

  \begin{proof} 
         We proceed by contradiction. 
         Suppose that $|X_i|\geq 2$ for all $X_i \in \copart$. By \Cref{Lemma: No k-dom sets of size k}, $K_{s_1,\dots, s_r}$ has no $k$-dominating sets of size $k$. Therefore, there exist at least two sets $X_1, X_2 \in \copart$ that form a $k$-coalition and $|X_1 \cup X_2| > k$. Since all other sets in $\copart$ consist of at least two vertices, it follows that 
        $$| \copart| < \frac{(\sum_{i=1}^{r} s_i ) - k }{2} + 2 \leq \frac{(\sum_{i=1}^{r} s_i  - k) + (\sum_{i=2}^{r} s_i - s_1  - k)}{2} + 2 = \left( \sum_{i=2}^{r} s_i \right)  - k + 2,$$ where the second inequality follows from the observation that $\sum_{i=2}^{r} s_i  - s_1-k  \geq 0$ when $r \geq 3$. 
        However, by \Cref{Lemma: R-partite CO_k lower bound}, $\COk(K_{s_1, \dots, s_r}) \geq \sum_{i=2}^{r} s_i - k +3$. Therefore, $\copart$ is not a maximum-size $k$-coalition partition, a contradiction.
    \end{proof}

    \begin{lemma} \label{Lemma: Contains a partite set}
        Let $\copart$ be a maximum-size $k$-coalition partition of $K_{s_1,\dots, s_r}$. If $\copart$ has $k$-coalition partners $X_1$ and $X_2$ such that $X_1 \cup X_2$ contains an entire partite set, then $\COk(G) \leq \sum_{i=2}^{r}s_i - k +3$.
    \end{lemma}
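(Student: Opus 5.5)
The plan is a counting argument on the vertices outside $X_1 \cup X_2$. Write $n = \sum_{i=1}^r s_i$, let $S_j \subseteq X_1 \cup X_2$ be the partite set that is entirely contained, and set $U = V \setminus (X_1 \cup X_2)$. Every set of $\copart$ other than $X_1, X_2$ lies in $U$. Hence $|\copart| \le |U| + 2 = n - |X_1 \cup X_2| + 2$.

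The easy case is $|X_1 \cup X_2| \ge s_1 + k - 1$. Then $|\copart| \le n - s_1 - k + 3 = \sum_{i=2}^r s_i - k + 3$, and we are done. This already covers any situation where $X_1 \cup X_2$ has at least $k-1$ vertices outside $S_j$, because $s_j \ge s_1$.

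So assume $X_1 \cup X_2$ has at most $k-2$ vertices outside $S_j$. I will show that the sets inside $U$ cannot all be small, and recover the missing $k-1$ from them. Fix a set $Y \subseteq U$ of $\copart$ and a $k$-coalition partner $Z$ of $Y$. By \Cref{Observation: No max size partition has k-dom set}, $Y$ is not itself $k$-dominating.
\begin{enumerate}
\item If $Z \subseteq U$, then $Y \cup Z$ misses all of $S_j$. Every vertex of $S_j$ therefore needs $k$ neighbours in $Y \cup Z$, so $|Y \cup Z| \ge k$.
\item If $Z = X_1$ (symmetrically $X_2$) and $X_1 \not\supseteq S_j$, pick $w \in S_j \setminus X_1$. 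Since $w \notin Y$, it needs $k$ neighbours in $X_1 \cup Y$, all lying outside $S_j$. At most $k-2$ of these come from $X_1$, so $|Y| \ge 2$.
\item If $Z = X_1 \supseteq S_j$, then $|X_1| \ge s_j > k$, and I will handle this subcase by the same counting using vertices outside $S_j$.
\end{enumerate}

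The counting step is then as follows. Let $t$ be the number of singleton sets of $\copart$ inside $U$. By the case analysis, every singleton partners only with sets $Z \subseteq U$ of size at least $k-1$. So if $t \ge 1$, some non-singleton set of size at least $k-1$ sits in $U$. Compare $|\copart| - 2$ (the number of sets in $U$) against $|U|$: one set of size $\ge k-1$ already saves $k-2$, and every other non-singleton saves at least $1$. The aim is to show the total saving is at least $k - 1 - (|X_1 \cup X_2| - s_1)$, which is exactly what is needed. If $t = 0$, all sets in $U$ have size at least $2$, which roughly halves the count. I would then compare $(|U|)/2 + 2$ with the bound, as in \Cref{Lemma: There exists a singleton in max size partitions}, using that $U$ contains $V \setminus S_j$ minus at most $k-2$ vertices.

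The main obstacle is this last bookkeeping. The saving must come out to exactly $k-1$ rather than $k-2$, and the boundary subcase where one of $X_1, X_2$ already contains $S_j$ must be treated carefully. I would first try to show directly that in that subcase $X_2$ must contain at least $k-1$ vertices outside $S_j$, since otherwise vertices of $S_j$ not in $X_1$ fail to be dominated. That would fold the subcase back into the easy case.
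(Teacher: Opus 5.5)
Your proposal does not close, and the trouble is exactly the bookkeeping you flag: the local bounds you extract are weaker than what is available, and the halving fallback cannot make up the difference.

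\emph{Subcase (3) never occurs.} If $X_1\supseteq S_j$, every vertex outside $X_1$ lies outside $S_j$ and has $s_j\ge s_1>k$ neighbours in $X_1$. So $X_1$ would be $k$-dominating and could not be a coalition partner. Hence both $X_1\cap S_j$ and $X_2\cap S_j$ are nonempty. Your proposed repair via ``vertices of $S_j$ not in $X_1$'' is vacuous in that subcase.

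\emph{Item (1) is too weak.} You use only $|Y\cup Z|\ge k$. When $X_1\cup X_2=S_j$ and $s_j=s_1$, the required saving is $k-1$. Consider one set $Z\subseteq U$ of size $k-1$, with every other vertex of $U$ a singleton partnering $Z$. This satisfies everything you have established, yet has $\sum_{i\ge 2}s_i-k+4$ sets. What excludes it is $|Y\cup Z|\ge k+1$, from \Cref{Lemma: No k-dom sets of size k}.

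\emph{Item (2) discards information.} You reduce to $|Y|\ge 2$, but the real content is $|(X_1\cup Y)\setminus S_j|\ge k$.

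\emph{The $t=0$ halving fails in general.} The comparison borrowed from \Cref{Lemma: There exists a singleton in max size partitions} needs $r\ge 3$. For $K_{6,6}$ with $k=5$ and $X_1\cup X_2=S_1$, it gives only $|\copart|\le 5$, while the target is $4$.

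No singleton analysis or halving is needed once (1) and (2) are used at full strength. If $\copart\ne\{X_1,X_2\}$, take a third set $X_3$ and a partner $X_4$, which exists by \Cref{Observation: No max size partition has k-dom set}.
\begin{enumerate}
\item If $X_4\notin\{X_1,X_2\}$, then $|X_3\cup X_4|\ge k+1$, so $|\copart|\le n-|X_1\cup X_2|-|X_3\cup X_4|+4\le n-s_j-k+3$.
\item If, say, $X_4=X_2$, the vertices of $X_1\cap S_j\neq\emptyset$ need $k$ neighbours in $(X_2\cup X_3)\setminus S_j$. So $|X_1\cup X_2\cup X_3|\ge s_j+k$ and $|\copart|\le n-s_j-k+3$.
\end{enumerate}
Since $s_j\ge s_1$, both cases give $\sum_{i\ge 2}s_i-k+3$. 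This is the paper's argument, and it also makes your preliminary split on $|X_1\cup X_2|$ unnecessary.
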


    \begin{proof}
        Suppose that $\copart = \{X_1, X_2, \dots,X_{\COk(K_{s_1,\dots, s_r})}\}$ is such a $k$-coalition partition and $S_j \subseteq X_1 \cup X_2$. Note that neither $X_1$ nor $X_2$ can individually contain the entire partite set $S_j$, since then they would then be $k$-dominating. If $\copart = \{X_1,X_2\}$, then $\COk(G) = 2$ and we are done. We may assume there exists another set $X_3 \in \copart$. By \Cref{Observation: No max size partition has k-dom set}, $X_3$ is not a $k$-dominating set, and therefore has a $k$-coalition partner, $X_4$. By \Cref{Lemma: No k-dom sets of size k}, we have $|X_3 \cup X_4| \geq k+1$. 
        If $X_4 \notin \{X_1, X_2\}$, then
        \begin{equation*}
            |\copart| \leq \sum_{i=1}^r s_i - |X_1 \cup X_2| - |X_3\cup X_4| + 4 \leq  \sum_{i=1}^r s_i - s_j - (k+1) + 4  \leq  \sum_{i=2}^r s_i -k +3,
        \end{equation*}
        where the last inequality follows from the fact that $s_j \geq s_1$.  
        If $X_4 \in \{X_1, X_2\}$, suppose without loss of generality that $X_4 = X_2$. In order to $k$-dominate the vertices in $X_1 \cap S_j$, we have $| (X_2 \cup X_3) \cap (V(K_{s_1,\dots,s_r}) \setminus S_j)| \geq k$. Hence
 \begin{align*}
            |\copart| &\leq \sum_{i=1}^r s_i - |X_1 \cup X_2\cup X_3| + 3
            \leq \sum_{i=1}^r s_i - s_j - k + 3  \leq  \sum_{i=2}^r s_i -k +3. \qedhere
        \end{align*}
    \end{proof}


    \begin{lemma} \label{Lemma: m not zero}
        Suppose $k \geq r$ and $r >1$, then $k + \big\lceil\frac{k}{r-1}\big\rceil -1 \not\equiv 0 \pmod{r}$.  
    \end{lemma}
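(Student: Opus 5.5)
The plan is a direct computation: write $k$ in terms of $q = \lceil \frac{k}{r-1} \rceil$ and read off the residue of $k + q - 1$ modulo $r$. No earlier result from the paper is needed; the statement is purely arithmetic.

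\textbf{Step 1: write $k$ in terms of $q$.} Set $q = \left\lceil \frac{k}{r-1} \right\rceil$, which makes sense since $r > 1$. By the definition of the ceiling, $(r-1)(q-1) < k \leq (r-1)q$. Hence we can write
\begin{equation*}
k = (r-1)q - t \quad \text{for a unique integer } t \text{ with } 0 \leq t \leq r-2 .
\end{equation*}
The upper bound $t \leq r-2$ is the strict inequality $k > (r-1)(q-1)$, rewritten as $t < r-1$.

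\textbf{Step 2: compute the residue.} Substituting,
\begin{equation*}
k + q - 1 = (r-1)q - t + q - 1 = rq - (t+1).
\end{equation*}
Therefore $k + q - 1 \equiv -(t+1) \pmod{r}$. Since $1 \leq t+1 \leq r-1$, the residue $-(t+1)$ is not divisible by $r$. This gives $k + \left\lceil \frac{k}{r-1} \right\rceil - 1 \not\equiv 0 \pmod{r}$.

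\textbf{Main obstacle.} There is no substantive obstacle. The only point needing care is the range of the remainder $t$: it must satisfy $t \leq r-2$ rather than $t \leq r-1$. This is exactly where the ceiling, and not some arbitrary quotient, is used. The hypothesis $k \geq r$ is not needed for the argument, though it holds in the setting where the lemma is applied. It does guarantee $q \geq 2$, but nothing in the computation depends on that.
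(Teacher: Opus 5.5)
Your proof is correct and is essentially the paper's argument. The paper writes $k = q(r-1)+d$ with $0 \le d < r-1$ and splits into the cases $d=0$ (giving $qr-1$) and $1 \le d \le r-2$ (giving $qr+d$); your parametrization $k = (r-1)q - t$, taken directly through the ceiling, merges these two cases into the single computation $rq-(t+1)$. You are also right that the hypothesis $k \geq r$ is never used, and the paper's proof does not use it either.
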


    \begin{proof}
        Write $k = q(r-1) + d$ for some $q\in \ints_{\geq 0}$ and $0\leq d <r-1$. First, suppose $d = 0$. Then 
        $$ k + \left\lceil\frac{k}{r-1}\right\rceil -1  = q(r-1) + q-1 = qr -1 \not\equiv 0 \pmod{r}.$$
        Next, suppose $1\leq d < r-1$. Then $\big\lceil\frac{k}{r-1}\big\rceil = \big\lceil q + \frac{d}{r-1}\big\rceil = q + 1$, and so 
        \begin{equation*}
            k + \left\lceil\frac{k}{r-1}\right\rceil -1  = (q(r-1) + d) +(q+1) -1 = qr +d \not\equiv 0 \pmod{r}. \qedhere
        \end{equation*}
    \end{proof}

We now have the tools required to prove the main theorem. 
\begin{proof} [Proof of \Cref{Theorem: Multipartite k-coalition number}] \setcounter{tbox}{0}
We proceed by casework.



We may now assume $k\geq 2$. Let $\copart = \{X_1, X_2, \dots,X_{\COk(K_{s_1,\dots, s_r})}\}$ be a maximum-size $k$-coalition partition of $\COk(K_{s_1,\dots, s_r})$. By \Cref{Lemma: There exists a singleton in max size partitions}, there is at least one singleton set $X_1 \in \copart$, which must have at least one coalition partner $X_2 \in \copart$.
By \Cref{Lemma: No k-dom sets of size k}, $|X_1 \cup X_2| \geq k +1$, and consequently $|X_2| \geq k$.

\sta{\label{Multipartite Case 2} If $X_2$ forms a $k$-coalition with every set in $\copart$, then $|\copart| \leq \sum_{i=2}^r s_i - k + 3$.}

By \Cref{Lemma: max number of coalition partners}, the set $X_2$ has at most $\Delta(G) - k +2$ distinct $k$-coalition partners. Since $\Delta(K_{s_1,\dots, s_r}) = \sum_{i=2}^r s_i$, it follows that $|\copart| \leq 1 + \Delta(G) - k + 2 = \sum_{i=2}^{r} s_i -k +3$, which proves \eqref{Multipartite Case 2}.
\vspace{3mm}

Now suppose there exists some set $X_3 \in \copart$ that does not form a $k$-coalition with $X_2$. By \Cref{Lemma: No k-dom sets of size k}, the set $X_3$ cannot be a $k$-dominating set itself, and so it has a $k$-coalition partner $X_t$, where $t \notin \{2,3\}$. In particular, it is possible that $t=1$.
By \Cref{Lemma: Contains a partite set}, if $X_1 \cup X_2$ or $X_3 \cup X_t$ contains an entire partite set, then $|\copart| \leq \sum_{i=2}^r s_i - k + 3$.

Since we lower bound the $k$-coalition number with the construction in \Cref{Lemma: R-partite CO_k lower bound}, if the maximum-size $k$-coalition partition has a structure as described above, then the $k$-coalition number is equal to $\sum_{i=2}^{r} s_i -k +3$. However, these cases are only optimal when $s_1$ is close in size to $k$.

We now examine the cases where the maximum-size $k$-coalition partition does not admit this structure and the $k$-coalition number will exceed this lower bound.
Label the vertices of the partite set $S_i$ as $v_{i,1}, v_{i,2}, \dots, v_{i,s_i}$. Note that $|X_2| \geq k$ and $|X_3 \cup X_t| \geq k+1$.
We now suppose that neither $X_1 \cup X_2$ nor $X_3 \cup X_t$ contains a partite set and consider the following cases:

\begin{figure} [hb]
    \centering
    \scalebox{0.85}{
\begin{tikzpicture}[
  every node/.style={circle, draw, fill=white, inner sep=1.5pt},
  scale=0.95
]
\begin{scope} 
[yshift=0.2cm]
  \foreach \i in {1,2,3} {
    \node  (s1\i) at (-3, 6.5 - 0.75*\i) {};
    \node[draw=none, fill=none]  at (-3, 6.7 - 0.75*\i) {$v_{1,\i}$};
    \node (s2\i) at (0, 6.5 - 0.75*\i) {};
    \node[draw=none, fill=none]  at (0, 6.7 - 0.75*\i) {$v_{2,\i}$};
    \node (s3\i) at (3, 6.5 - 0.75*\i) {};
    \node[draw=none, fill=none] at (3, 6.7 - 0.75*\i) {$v_{3, \i}$};
    \node (s4\i) at (6, 6.5 - 0.75*\i) {};
    \node[draw=none, fill=none] at (6, 6.7 - 0.75*\i) {$v_{4, \i}$};
  }

  \node[draw=none, fill=none] at (-3, 3.75) {$\vdots$};
  \node[draw=none, fill=none] at (0, 3.75)  {$\vdots$};
  \node[draw=none, fill=none] at (3, 3.75)  {$\vdots$};
  \node[draw=none, fill=none] at (6, 3.75)  {$\vdots$};

  \node  [label=below:$v_{1,s_1}$] (s14)  at (-3, 3) {};
  \node  [label=below:$v_{2,s_2}$] (s24) at (0, 3) {};
  \node  [label=below:$v_{3,s_3}$] (s34) at (3, 3) {};
  \node  [label=below:$v_{4,s_4}$] (s44) at (6, 3) {};

  \begin{scope}[on background layer]
    \node[ellipse, draw, minimum width=1.5cm, minimum height=4.5cm, fit=(s11)(s12)(s13)(s14)] {};
    \node[ellipse, draw, minimum width=1.5cm, minimum height=4.5cm, fit=(s21)(s22)(s23)(s24)] {};
    \node[ellipse, draw, minimum width=1.5cm, minimum height=4.5cm, fit=(s31)(s32)(s33)(s34)] {};
    \node[ellipse, draw, minimum width=1.5cm, minimum height=4.5cm, fit=(s41)(s42)(s43)(s44)] {};
  \end{scope}

  \begin{scope}[on background layer]
    \node[ellipse, draw=Cerulean, fill=  Cerulean!30, minimum width=0.9cm, minimum height=0.9cm, fit=(s11)] {};
    \node[ellipse, draw=Cerulean, fill=  Cerulean!30, minimum width=0.9cm, minimum height=0.9cm, fit=(s21)] {};
  \end{scope}

  \begin{scope}[on background layer]
    \node[ellipse, draw=OliveGreen, fill=LimeGreen!30, minimum width=0.9cm, minimum height=0.9cm, fit=(s31) ] {};
   \node[ellipse, draw=OliveGreen, fill=LimeGreen!30, minimum width=0.9cm, minimum height=0.9cm, fit=(s41)] {};
  \end{scope}

  \node[draw=none, fill=none] at (-2.2, 6.5) {\( S_1 \)};
  \node[draw=none, fill=none] at (0.8, 6.5)  {\( S_2 \)};
  \node[draw=none, fill=none] at (3.8, 6.5)  {\( S_3 \)};
  \node[draw=none, fill=none] at (6.8, 6.5) {\( S_4 \)};
\end{scope}


  \begin{scope}
  \foreach \i in {1,2,3} {
    \node (c2s1\i) at (-3, 1 - 0.75*\i) {};
    \node[draw=none, fill=none] at (-3, 1.2 - 0.75*\i) {$v_{1,\i}$};
    \node (c2s2\i) at (0, 1 - 0.75*\i) {};
    \node[draw=none, fill=none] at (0, 1.2 - 0.75*\i) {$v_{2,\i}$};
    \node (c2s3\i) at (3, 1 - 0.75*\i) {};
    \node[draw=none, fill=none] at (3, 1.2 - 0.75*\i) {$v_{3, \i}$};
    \node (c2s4\i) at (6, 1 - 0.75*\i) {};
    \node[draw=none, fill=none] at (6, 1.2 - 0.75*\i) {$v_{4, \i}$};
  }

  \node[draw=none, fill=none] at (-3, -2.25) {$\vdots$};
  \node[draw=none, fill=none] at (0, -2.25)  {$\vdots$};
  \node[draw=none, fill=none] at (3, -2.25)  {$\vdots$};
  \node[draw=none, fill=none] at (6, -2.25) {$\vdots$};

  \node [label=below:$v_{1,s_1}$] (c2s14) at (-3, -2) {};
  \node [label=below:$v_{2,s_2}$] (c2s24) at (0, -2) {};
  \node [label=below:$v_{3,s_3}$] (c2s34) at (3, -2) {};
  \node [label=below:$v_{4,s_4}$] (c2s44) at (6, -2) {};

  \begin{scope}[on background layer]
    \node[ellipse, draw, minimum width=1.5cm, minimum height=4.5cm, fit=(c2s11)(c2s12)(c2s13)(c2s14)] {};
    \node[ellipse, draw, minimum width=1.5cm, minimum height=4.5cm, fit=(c2s21)(c2s22)(c2s23)(c2s24)] {};
    \node[ellipse, draw, minimum width=1.5cm, minimum height=4.5cm, fit=(c2s31)(c2s32)(c2s33)(c2s34)] {};
    \node[ellipse, draw, minimum width=1.5cm, minimum height=4.5cm, fit=(c2s41)(c2s42)(c2s43)(c2s44)] {};
  \end{scope}

  \begin{scope}[on background layer]
    \node[ellipse, draw=Cerulean, fill=  Cerulean!30, minimum width=0.9cm, minimum height=0.9cm, fit=(c2s11)] {};
    \node[ellipse, draw=Cerulean, fill=  Cerulean!30, minimum width=0.9cm, minimum height=0.9cm, fit=(c2s21)] {};
     \node[ellipse, draw=Cerulean, fill=  Cerulean!30, minimum width=0.9cm, minimum height=0.9cm, fit=(c2s31)] {};
  \end{scope}

  \begin{scope}[on background layer]
   \node[ellipse, draw=OliveGreen, fill=LimeGreen!30, minimum width=0.9cm, minimum height=0.9cm, fit=(c2s22) ] {};
    \node[ellipse, draw=OliveGreen, fill=LimeGreen!30, minimum width=0.9cm, minimum height=0.9cm, fit=(c2s32) ] {};
    \node[ellipse, draw=OliveGreen, fill=LimeGreen!30, minimum width=0.9cm, minimum height=0.9cm, fit=(c2s41)(c2s42)] {};
  \end{scope}

  \node[draw=none, fill=none] at (-2.2, 1) {\( S_1 \)};
  \node[draw=none, fill=none] at (0.8, 1)  {\( S_2 \)};
  \node[draw=none, fill=none] at (3.8, 1)  {\( S_3 \)};
  \node[draw=none, fill=none] at (6.8, 1) {\( S_4 \)};
\end{scope}

\end{tikzpicture} }
\caption{The complete $4$-partite graph $K_{s_1,s_2,s_3,s_4}$ with the lower bound constructions for \eqref{Multipartite Case 3} with $k=2$ and $r=4$ (top)  and \eqref{Multipartite Case 4} with $k=3$ and $r=4$ (bottom). The set $Y_1$ is in blue, $Y_2$ is in green, and all other vertices are singletons.} 
    \label{fig: Multipartite Case 3 and Case 4}
\end{figure}

\sta{\label{Multipartite Case 3} If $r \geq 2k$, then $\COk(K_{s_1,\dots,s_r}) =\sum_{i=1}^r s_i - 2k + 2$.}
         \textbf{Upper Bound.} Note that $|V \setminus (X_2 \cup X_3 \cup X_t)| \leq \sum_{i=1}^r s_i - k - (k+1)$. We may upper bound the number of sets in $\copart$ by assuming that all other sets are singletons. Hence
        \begin{equation*}
            \COk(K_{s_1,\dots,s_r}) = |\copart| \leq \sum_{i=1}^r s_i - 2k-1  + 3 = \sum_{i=1}^r s_i - 2k + 2.
        \end{equation*}
        \textbf{Lower Bound.} Let $Y_1 = \{v_{1,1},v_{2,1}, \dots, v_{k,1}\}$ and $Y_2 = \{v_{k+1,1},v_{k+2,1}, \dots, v_{2k,1}\}$. Then the following is a $k$-coalition partition of $K_{s_1, \dots, s_r}$:
        \begin{equation*}
             \copart = \left\{ Y_1, Y_2 \right\} \cup \left\{ \{v\} \mid v \in V \setminus (Y_1 \cup Y_2) \right\}.
        \end{equation*}
       See \Cref{fig: Multipartite Case 3 and Case 4} (left) for an example. The set $Y_1$ contains one vertex from each of $S_1, \dots, S_k$ and the set $Y_2$ contains one vertex from each of $S_{k+1}, \dots, S_{2k}$. We take the remaining vertices to be singleton sets. Then $Y_1$ forms a $k$-coalition with every singleton $\{v\}$ where $v \in V \setminus (S_1 \cup \dots \cup S_k)$ and $Y_2$ forms a $k$-coalition with every singleton $\{v\}$ where $v \in V \setminus (S_{k+1} \cup \dots \cup S_{2k})$. Consequently, 
        $\COk(K_{s_1, \dots, s_r}) \geq \sum_{i=1}^r s_i - 2k + 2,$
        which proves \eqref{Multipartite Case 3}.
       
       \vspace{3mm}

\sta{\label{Multipartite Case 4} If $k < r < 2k $, then $\COk(K_{s_1,\dots,s_r}) =\sum_{i=1}^r s_i - 2k + 1$.}

    \noindent \textbf{Upper Bound.} First, suppose that $|X_1 \cup X_2| > k+1$ (implying that $|X_2| > k  $) or $|X_3 \cup X_t| > k+1$. 
    Then, even if all the other sets in $\copart$ are singleton sets, it follows that $$|\copart| \leq |V \setminus (X_2 \cup X_3 \cup X_t) | + 3 < \sum_{i=1}^r s_i - 2k - 1 +3 = \sum_{i=1}^r s_i - 2k + 2.$$  
       Now assume that $|X_1 \cup X_2| = k+1$ (so, in particular, $|X_2| = k$) and $|X_3 \cup X_t| = k+1$. 
       For contradiction, suppose that all sets in $\copart \setminus \{X_2, X_3, X_t\}$ are singletons. We will argue that this is not possible.
       
        Note that every partite set has at least $3$ vertices, and by \Cref{Lemma: No k-dom sets of size k} every $k$-dominating set has at least $3$ vertices. 
       Therefore no two singleton sets can form a $k$-coalition. By assumption, neither $X_1 \cup X_2$ nor $X_3 \cup X_t$ contains an entire partite set, so \Cref{Lemma: Min dominating set in r-partite graphs} implies that the $k$-dominating sets $X_1 \cup X_2$  and $X_3 \cup X_t$ must each intersect $k+1$ partite sets, with one vertex from each. 
      
       

 If $X_2$ intersects the partite set $S_j$, then there exists at least one vertex $v\in S_j \setminus (X_2 \cup X_3 \cup X_t)$. By assumption, $\{v\}$ is a singleton set in $\copart$. However, $X_2 \cup \{v\}$ is not a $k$-dominating set because it does not $k$-dominate the remaining vertices in $S_j \setminus (X_2 \cup \{v\})$.  If both $|X_3|< k$ and $|X_t| < k$, then neither $X_3$ nor $X_t$ has enough vertices to form a $k$-coalition with $\{v\}$. Hence, $\{v\}$ does not have a $k$-coalition partner, a contradiction.
       
    Hence, either $X_3$ or $X_t$ has exactly $k$ vertices. Without loss of generality, say $|X_3| = k$ and $|X_t| = 1$.
    Then, since $X_2$ and $X_3$ each intersect $k$ distinct partite sets and there are at most $2k-1$ partite sets, there is some partite set $S_l$ intersecting both $X_2$ and $X_3$. Since $S_l$ has at least three vertices, there is at least one vertex $u \in S_l \setminus (X_2 \cup X_3)$. However, $\{u\}$ does not form a $k$-coalition with $X_2$ or $X_3$, since $\{u\} \cup X_3$ does not $k$-dominate the vertices in $X_2\cap S_l$. Similarly  $\{u\} \cup X_2$ does not $k$-dominate the vertices in $X_3\cap S_l$. Therefore, $\{u\}$ does not have a $k$-coalition partner, a contradiction.

    Since there is at least one set in $\copart \setminus \{X_2, X_3,X_t\}$ that is not a singleton set, we have the bound  
       \begin{equation*}
           |\copart| < |V \setminus (X_2 \cup X_3 \cup X_t)| + 3  \leq \sum_{i=1}^r s_i - 2k - 1 + 3 = \sum_{i=1}^r s_i - 2k + 2. 
        \end{equation*} 

       \noindent \textbf{Lower Bound.} If $s_1 \leq k + 2$, then $\sum_{i=1}^r s_i - 2k + 1 \leq  \sum_{i=2}^r s_i - k + 3$ and the construction in \Cref{Lemma: R-partite CO_k lower bound} achieves the maximum. We assume here that $s_1 \geq k + 2$.
       Let $Y_1 = \{v_{1,1},v_{2,1}, \dots, v_{k,1}\}$ and $Y_2 = \{v_{2,2}, \dots, v_{k+1,2}\} \cup \{v_{k+1,1}\}$.  
       Then define
       \begin{equation*}
           \copart = \left\{ Y_1, Y_2 \right\} \cup \left\{ \{v\} \mid v \in V \setminus (Y_1 \cup Y_2) \right\}.
       \end{equation*}
       See \Cref{fig: Multipartite Case 3 and Case 4} (right) for this construction. The set $Y_1$ forms a $k$-coalition with every singleton in $S_i$ where $i \geq k+1$ and $Y_2$ forms a $k$-coalition with every singleton in $S_i$ when $i \neq k+1$. Hence, $\copart$ is a $k$-coalition partition. Therefore, 
       \begin{equation*}
           |\copart| \geq \sum_{i=1}^r s_i - k - (k+1) + 2 = \sum_{i=1}^r s_i - 2k + 1.
       \end{equation*}

\sta{\label{Multipartite Case 5} If $r \leq k$ and $k + \big\lceil \frac{k}{r-1}\big\rceil -1  \pmod r \leq \frac{r}{2}$,  then $$\COk(K_{s_1,\dots,s_r}) = \sum_{i=1}^r s_i - 2k - 2 \left\lceil \frac{k}{r-1}\right\rceil + 4.$$}

\noindent \textbf{Upper Bound.} Since the $k$-dominating sets $X_1 \cup X_2$ and $X_3 \cup X_t$ each intersect at most $k$ partite sets,  \Cref{Lemma: Min dominating set in r-partite graphs} implies that $|X_1 \cup X_2| \geq k + \big\lceil \frac{k}{r-1}\big\rceil$ and $|X_3 \cup X_t| \geq k + \big\lceil \frac{k}{r-1} \big\rceil$.
      By assuming all other sets in the partition are singleton sets, we obtain the bound
       \begin{equation*}
           |\copart| \geq |V \setminus (X_2 \cup X_3 \cup X_t) | + 3  = \sum_{i=1}^r s_i - 2k - 2\left\lceil \frac{k}{r-1}\right\rceil + 4.
       \end{equation*}

         \noindent \textbf{Lower Bound.}  
        When $s_1 \leq k + 2 \big\lceil \frac{k}{r-1} \big\rceil-1$, we have $$\sum_{i=1}^r s_i - 2k - 2\left\lceil \frac{k}{r-1} \right\rceil +4 \leq \sum_{i=2}^r s_i - k + 3.$$ In this case, the construction in \Cref{Lemma: R-partite CO_k lower bound} is a maximum-size $k$-coalition partition. 
        In what follows, we assume that $s_1 \geq k + 2 \left\lceil \frac{k}{r-1} \right\rceil-1$.
         Set $m = k + \lceil \frac{k}{r-1}\rceil -1 \pmod r $ and $n = \lceil \frac{k}{r-1}\rceil$. By \Cref{Lemma: m not zero}, $0<m \leq \frac{r}{2}$ and, by assumption, $s_i \geq 2n-1$ for all $i$. Define
        \begin{align*}
            &Y_1 =    \{v_{i,1}, \dots, v_{i,n} \mid 1 \leq i \leq m \} \cup \{v_{i,1}, \dots, v_{i,n-1} \mid m+1  \leq i \leq r \};\\
            &Y_2 = \{v_{i,n+1}, \dots, v_{i,2n-1} \mid 1 \leq i \leq r-m \} \cup \{v_{i,n}, \dots, v_{i,2n-1} \mid r-m+1  \leq i \leq r \}.
        \end{align*}
        Then set
        \begin{equation*}
            \copart = \left\{ Y_1, Y_2 \right\} \cup \left\{ \{v\} \mid v \in V \setminus (Y_1 \cup Y_2)\right\}.
        \end{equation*}
        See \Cref{fig: Multipartite Case 3} for this construction. 
        Then $Y_1$ and $Y_2$ are not $k$-dominating sets because $Y_1$ does not $k$-dominate the vertices in $(S_1 \cup \dots \cup S_{m}) \setminus Y_1$  and $Y_2$ does not $k$-dominate the vertices in $(S_{r-m+1}\cup \dots \cup S_r)\setminus Y_2$. 
        Moreover, $Y_1$ forms a $k$-coalition with every singleton in $S_{m+1}, S_{m+2}, \dots, S_{r}$ (where  $m + 1 \leq \frac{r}{2} + 1$) and $Y_2$ forms a $k$-coalition with every singleton in $S_{1}, S_{2},\dots, S_{r-m}$ (where $r-m  \geq \frac{r}{2})$. Hence, $\copart$ is a $k$-coalition partition and we have 
        \begin{align*}
             |\copart| \geq |V\setminus(Y_1 \cup Y_2)| + 2 
              \geq  \sum_{i=1}^r s_i - 2k - 2 \left\lceil \frac{k}{r-1} \right\rceil  +4,
         \end{align*}
         which proves \eqref{Multipartite Case 5}.   Note that complete bipartite graphs fall under \eqref{Multipartite Case 5} and our results agree with \Cref{Theorem: Complete bipartite graphs} \cite{BHS25}.

\begin{figure} 
    \centering
    \scalebox{0.85}{
\begin{tikzpicture}[
  every node/.style={circle, draw, fill=white, inner sep=1.5pt}, 
  scale=1
]

\foreach \i in {1,2,3,4,5,6} {
  \node  (s1\i) at (-4, 3 - 0.75*\i) {};
  \node[draw=none, fill = none] at (-4, 3.2 - 0.75*\i) {$v_{1,\i}$};
  \node (s2\i) at (0, 3 - 0.75*\i) {};
    \node[draw=none, fill = none] at (0, 3.2 - 0.75*\i) {$v_{2,\i}$};
  \node (s3\i) at (4, 3 - 0.75*\i) {};
    \node[draw=none, fill = none] at (4, 3.2 - 0.75*\i) {$v_{3, \i}$};
}

\node[draw=none, fill=none] at (-4, -1.8) {$\vdots$};
\node[draw=none, fill=none] at (0, -1.8)  {$\vdots$};
\node[draw=none, fill=none] at (4, -1.8)  {$\vdots$};

\node[draw=none, fill = none] at (-4, -2.7) {$v_{1,s_1}$};
\node[draw=none, fill = none] at (0, -2.7) {$v_{2,s_2}$};
\node[draw=none, fill = none] at (4, -2.7){$v_{3,s_3}$};
\node (s17)  at (-4, -2.4) {};
\node (s27)  at (0, -2.4) {};
\node (s37) at (4, -2.4) {};

\begin{scope}[on background layer]
  \node[ellipse, draw, minimum width=1.8cm, minimum height=4.5cm, fit=(s11)(s12)(s13)(s14)(s15)(s16)(s17)] {};
  \node[ellipse, draw, minimum width=1.8cm, minimum height=4.5cm, fit=(s21)(s22)(s23)(s24)(s25)(s26)(s27)] {};
  \node[ellipse, draw, minimum width=1.8cm, minimum height=4.5cm, fit=(s31)(s32)(s33)(s34)(s35)(s36)(s37)] {};
\end{scope}

\begin{scope}[on background layer]
\node[ellipse, draw = Cerulean, fill =   Cerulean!30, minimum width=1cm, minimum height=2cm, fit=(s11)(s12) (s13)] {};
\node[ellipse, draw = Cerulean, fill =   Cerulean!30, minimum width=1cm, minimum height=1.5cm, fit=(s21)(s22)] {};
\node[ellipse, draw =  Cerulean, fill =   Cerulean!30, minimum width=1cm, minimum height=1.5cm, fit=(s31)(s32)] {};
\end{scope}

\begin{scope}[on background layer]
\node[ellipse, draw = none, fill = LimeGreen!30, minimum width=1cm, minimum height=1.5cm, fit=(s14)(s15) ] {};
\node[ellipse, draw = none, fill = LimeGreen!30, minimum width=1cm, minimum height=1.5cm, fit=(s23)(s24)] {};
\node[ellipse, draw = none, fill = LimeGreen!30, minimum width=1cm, minimum height=2cm, fit=(s33)(s34) (s35)] {};
\end{scope}

\node[draw=none, fill=none] at (-3.2, 3.1) {\( S_1 \)};
\node[draw=none, fill=none] at (0.8, 3.1)  {\( S_2 \)};
\node[draw=none, fill=none] at (4.8, 3.1)  {\( S_3 \)};

\end{tikzpicture} }
    \caption{The complete $3$-partite graph $K_{s_1, s_2, s_3}$ with the $5$-coalition partition attaining the lower bound in \eqref{Multipartite Case 5}. The set $Y_1$ is in blue, the set $Y_2$ is in green, and all other vertices are singleton sets.  }
    \label{fig: Multipartite Case 3}
\end{figure}

\sta{\label{Multipartite Case 6} If $r \leq k$ and $k + \lceil \frac{k}{r-1}\rceil -1 \pmod r > \frac{r}{2}$, then 
$$\COk(K_{s_1,\dots,s_r} )=  \sum_{i=1}^r s_i - 2k - 2\left\lceil \frac{k}{r-1} \right\rceil  + 3.$$}
\noindent \textbf{Upper Bound.} 
For every $r \geq 3$, when $s_1 \leq k + 2 \lceil \frac{k}{r-1} \rceil$, we have $$\sum_{i=1}^r s_i - 2k - 2\left\lceil \frac{k}{r-1} \right\rceil \leq \sum_{i=2}^r s_i - k + 3.$$ In this case, the construction from \Cref{Lemma: R-partite CO_k lower bound} is a maximum-size $k$-coalition partition. We may now assume $s_1 \geq k + 2 \big\lceil \frac{k}{r-1} \big\rceil$. 
As above, $X_1 \cup X_2$ and $ X_3 \cup X_t$ each intersect at most $r$ partite sets (and do not contain an entire partite set), so \Cref{Lemma: Min dominating set in r-partite graphs} implies both $|X_1 \cup X_2| \geq k + \lceil \frac{k}{r-1}\rceil$ and $|X_3 \cup X_t| \geq k + \lceil \frac{k}{r-1}\rceil$. If $|X_1 \cup X_2| > k + \lceil \frac{k}{r-1}\rceil$ or  $|X_3 \cup X_t| > k + \lceil \frac{k}{r-1}\rceil$, then 
        \begin{equation*}
            |\copart| \leq |V \setminus (X_2 \cup X_3\cup X_t)| +3 <  \sum_{i=1}^r s_i - 2k - 2 \left\lceil \frac{k}{r-1} \right\rceil + 4.
        \end{equation*}
       We now assume $|X_1 \cup X_2| = k + \lceil \frac{k}{r-1}\rceil$ 
        and $|X_3 \cup X_t| = k + \lceil \frac{k}{r-1}\rceil$. For contradiction, suppose that all the remaining sets in  $\copart \setminus \{X_1, X_2, X_3, X_t\}$ are singletons. Since no pair of singletons can form a $k$-coalition, we obtain the contradiction by finding at least one singleton set that cannot form a $k$-coalition with $X_2, X_3,$ or $X_t$.
        
        By \Cref{Lemma: Min dominating set in r-partite graphs}, the $k$-dominating sets $X_1 \cup X_2$ and $X_3 \cup X_t$ must each intersect all $r$ partite sets, with at most $\big\lceil \frac{k}{r-1}\big\rceil$ vertices per partite set. Since $s_j \geq s_1 \geq k + 2\lceil\frac{k}{r-1}\rceil$, we have $|S_j \setminus \{X_2, X_3, X_t\}| \geq k $ and so every partite set $S_j$ has at least two vertices $u,v$ that are not in $X_1, X_2, X_3$ or $X_t$.

        Let $m =  k + \lceil \frac{k}{r-1}\rceil -1 \pmod r$. By a counting argument, there are $m$ partite sets $S_j$ such that $|X_2 \cap S_j| = \lceil \frac{k}{r-1}\rceil$ vertices.  Suppose that $|X_3| \geq |X_t|$ (the case where $|X_3| \leq |X_t|$ follows by a symmetric argument).  If $|X_3| < k +\lceil \frac{k}{r-1} \rceil -1$, then $X_t$ and $X_3$ do not have enough vertices to form a $k$-coalition with a singleton. If $|X_3| = k +\lceil \frac{k}{r-1} \rceil -1$ (so $|X_t| = 1$), then there are $m$ distinct partite sets $S_j$ such that $|X_3 \cap S_j| = \lceil \frac{k}{r-1} \rceil$.
        Since $m > \frac{r}{2}$, there is at least one partite set $S_j$ such that both $|X_2 \cap S_j| = \lceil \frac{k}{r-1} \rceil$ and $|X_3 \cap S_j| = \lceil \frac{k}{r-1} \rceil$.
    Then 
    $$|X_2 \cap (V \setminus S_j)| \leq |X_2| - |X_2 \cap S_j| \leq k-1 \quad \text{and} \quad |X_3 \cap (V \setminus S_j)| \leq |X_3| - |X_3 \cap S_j| \leq k-1.$$ 
    Consequently, $X_2$ and $X_3$ do not $k$-dominate $u$ and $v$. Since $u$ and $v$ are non-adjacent, neither $X_2 \cup \{v\}$ nor  $X_3 \cup \{v\}$ can $k$-dominate $u$. Therefore, the singleton set $\{v\}$ does not form a $k$-coalition with $X_2$, $X_3$, or $X_t$.
    We conclude that there is at least one set in $\copart \setminus \{X_2, X_3,X_t\}$ that is not a singleton set. Therefore
         \begin{equation*}
        |\copart| < |V \setminus ( X_2 \cup X_3 \cup X_t)| + 3 =  \sum_{i=1}^r s_i - 2k - 2 \left\lceil \frac{k}{r-1}\right\rceil  + 4.
    \end{equation*}
        \noindent \textbf{Lower Bound.}
        Let $n = \lceil \frac{k}{r-1}\rceil$ and recall that $m> \frac{r}{2}$. Set 
        \begin{align*}
            &Y_1 =   \{v_{i,1}, \dots, v_{i,n} \mid 1 \leq i \leq m \} \cup \{v_{i,1}, \dots, v_{i,n-1} \mid m+1  \leq i \leq r \}; \\
            &Y_2 = \{v_{i,n+1}, \dots, v_{i,2n-1} \mid 1 \leq i \leq r-m \} \cup \{v_{i,n+1}, \dots, v_{i,2n} \mid r-m+1  \leq i \leq r \} \cup \{v_{r, n}\}.
        \end{align*}
        Define
        \begin{equation*}
            \copart = \left\{ Y_1, Y_2 \right\} \cup \left\{ \{v\} \mid v \in V \setminus (Y_1 \cup Y_2)\right\}.
        \end{equation*}
        Note that $Y_1$ and $Y_2$ are not $k$-dominating sets because $Y_1$ does not $k$-dominate any vertex in $(S_1 \cup  \dots \cup S_m) \setminus Y_1$ and $Y_2$ does not $k$-dominate any vertex in $S_r \setminus Y_2$. 
        Since $1<m < r$, the set $Y_1$ forms a $k$-coalition with each singleton $\{v\}$ in $S_{r}$, and the set $Y_2$ forms a $k$-coalition with each singleton $\{v\}$ in $S_{1} \cup S_{2} \cup \dots \cup  S_{r-1}$. Every singleton in $\copart$ forms a $k$-coalition with $Y_1$ or $Y_2$. Hence, $\copart$ is a $k$-coalition partition and we have 
        \begin{equation*}
             |\copart| \geq |V\setminus(Y_1 \cup Y_2)| + 2 \geq \sum_{i=1}^r s_i - 2k - 2\left\lceil \frac{k}{r-1} \right\rceil +3,
         \end{equation*}
  which proves \eqref{Multipartite Case 6}.
\end{proof}
\begin{figure} [htpb]
    \centering
        \scalebox{0.85}{
\begin{tikzpicture}[
  every node/.style={circle, draw, fill=white, inner sep=1.5pt}, 
  scale=1
]

\foreach \i in {1,2,3,4,5} {
  \node  (s1\i) at (-4, 3 - 0.75*\i) {};
  \node[draw=none, fill = none] at (-4, 3.2 - 0.75*\i) {$v_{1,\i}$};
  \node (s2\i) at (0, 3 - 0.75*\i) {};
    \node[draw=none, fill = none] at (0, 3.2 - 0.75*\i) {$v_{2,\i}$};
  \node (s3\i) at (4, 3 - 0.75*\i) {};
    \node[draw=none, fill = none] at (4, 3.2 - 0.75*\i) {$v_{3, \i}$};
}

\node[draw=none, fill=none] at (-4, -1.23) {$\vdots$};
\node[draw=none, fill=none] at (0, -1.23)  {$\vdots$};
\node[draw=none, fill=none] at (4, -1.23)  {$\vdots$};

\node[draw=none, fill = none] at (-4, -2) {$v_{1,s_1}$};
\node[draw=none, fill = none] at (0, -2) {$v_{2,s_2}$};
\node[draw=none, fill = none] at (4, -2){$v_{3,s_3}$};
\node (s17)  at (-4, -1.8) {};
\node (s27)  at (0, -1.8) {};
\node (s37) at (4, -1.8) {};


\begin{scope}[on background layer]
\draw[draw = black] (-4,0.2) ellipse (1 and 2.8);
\draw[draw = black] (0,0.2) ellipse (1 and 2.8);
\draw[draw = black] (4,0.2) ellipse (1 and 2.8);
\end{scope}

\begin{scope}[on background layer]
\node[ellipse, draw = Cerulean, fill = Cerulean!30, minimum width=1cm, minimum height=1.5cm, fit=(s11)(s12)] {};
\node[ellipse, draw = Cerulean, fill = Cerulean!30, minimum width=1cm, minimum height=1.5cm, fit=(s21)(s22)] {};
\node[ellipse, draw =  Cerulean, fill = Cerulean!30, minimum width=0.9cm, minimum height=1cm, fit=(s31)] {};
\end{scope}

    

\begin{scope}[on background layer]
\node[ellipse, draw = OliveGreen, fill = LimeGreen!30, minimum width=0.9cm, minimum height=1cm, fit=(s13) ] {};
\node[ellipse, draw = OliveGreen, fill = LimeGreen!30, minimum width=1cm, minimum height=1.5cm, fit=(s23)(s24)] {};
\node[ellipse, draw = OliveGreen, fill = LimeGreen!30, minimum width=0.9cm, minimum height=1cm, fit=(s32) ] {};
\node[ellipse, draw = OliveGreen, fill = LimeGreen!30, minimum width=1cm, minimum height=1.5cm, fit=(s33)(s34)] {};
\end{scope}

\node[draw=none, fill=none] at (-3.2, 3) {\( S_1 \)};
\node[draw=none, fill=none] at (0.8, 3)  {\( S_2 \)};
\node[draw=none, fill=none] at (4.8, 3)  {\( S_3 \)};

\end{tikzpicture}}
\caption{The complete $3$-partite graph $K_{s_1, s_2, s_3}$ with the $4$-coalition partition with the lower bound construction in \eqref{Multipartite Case 6}. The set $Y_1$ is shown in blue, the set $Y_2$ is shown in green, and all remaining vertices form singleton sets.}
    \label{fig: Multipartite Case 6}
\end{figure}

\end{document}